\newtheorem{theorem}{Theorem}[section]
\newtheorem{proposition}[theorem]{Proposition}
\theoremstyle{definition}
\theoremstyle{definition}
\newtheorem{algorithm}[theorem]{Algorithm}
\theoremstyle{remark}
\newtheorem{remark}[theorem]{Remark}
\newtheorem{example}[theorem]{Example}
\newcommand{\mb}[1]{\ensuremath{\mathbf{#1}}}
\newcommand{\ip}[2]{\ensuremath{\langle #1,#2\rangle}}
\newcommand{\CC}{\mathbb C}
\newcommand{\NN}{\mathbb N}
\newcommand{\PP}{\mathbb P}
\newcommand{\RR}{\mathbb R}
\newcommand{\cO}{\mathcal O}
\newcommand{\cT}{\mathcal T}
\newcommand{\cH}{\mathcal H}
\newcommand{\cI}{\mathcal I}
\newcommand{\cV}{\mathcal V}
\newcommand{\cE}{\mathcal E}
\newcommand{\argmax}{\mathrm{arg\,max}}
\author[A. Anand]{Akash Anand} 
\address{Akash Anand, Department of
  Mathematics and Statistics, Indian Institute of Technology, Kanpur, UP 208016}
\email{akasha@iitk.ac.in}
\author[J. Ovall]{Jeffrey S. Ovall} \address{Jeffrey S. Ovall,
  Fariborz Maseeh Department of Mathematics and Statistics, Portland
  State University, Portland, OR 97201}
\email{jovall@pdx.edu}
\author[S. Reynolds]{Samuel E. Reynolds} \address{Samuel Reynolds, Fariborz Maseeh Department of Mathematics and Statistics, Portland
  State University, Portland, OR 97201}
\email{ser6@pdx.edu}
\author[S. Wei\ss er]{Steffen Wei\ss er} \address{Steffen Wei\ss er, Department of Mathematics,
  Saarland University, 66041 Saarbr\"ucken, Germany}
\email{weisser@num.uni-sb.de}
\thanks{This work was partially supported by the NSF Grants
  DMS-1522471 and DMS-1624776.  Numerical studies were facilitated 
  by the Portland Institute for Computational Sciences.}
\begin{document}
\title[Trefftz Finite Elements on Curvilinear Polygons]{Trefftz Finite
  Elements on Curvilinear Polygons} \date{\today}

\begin{abstract}
  We present a Trefftz-type finite element method on meshes consisting
  of curvilinear polygons.  Local basis functions are computed using
  integral equation techniques that allow for the efficient and accurate
  evaluation of quantities needed in the formation of local stiffness
  matrices.  To define our local finite element spaces in the presence
  of curved edges, we must also properly define what it means for a
  function defined on a curved edge to be ``polynomial'' of a given
  degree on that edge.  We consider two natural choices, before
  settling on the one that yields the inclusion of complete polynomial
  spaces in our local finite element spaces, and discuss how to work
  with these edge polynomial spaces in practice.  An interpolation
  operator is introduced for the resulting finite elements, and we
  prove that it provides optimal order convergence for interpolation
  error under reasonable assumptions.  We provide a description of the
  integral equation approach used for the examples in this paper,
  which was recently developed precisely with these applications in
  mind.  A few numerical examples illustrate this optimal order
  convergence of the finite element solution on some families of meshes
  in which every element has at least one curved edge.  We also
  demonstrate that it is possible to exploit the approximation power
  of locally singular functions that may exist in our finite element
  spaces in order to achieve optimal order convergence without the
  typical adaptive refinement toward singular points.
\end{abstract}

\maketitle

\noindent{\small{\bf Key words.}	finite element methods, curvilinear polygons, interpolation error analysis, Trefftz methods}

\noindent{\small{\bf AMS subject classifications.}  65N30, 65R10, 65N12, 65N15}

\section{Introduction}\label{Intro}
Polygonal and polyhedral meshes in finite element analysis for the
numerical treatment of boundary value problems have attracted a lot of
interest during the last few years due to their enormous
flexibility. They resolve the paradigm of a small class of element
shapes (e.g. triangles, quadrilaterals, tetrahedra, etc.) in finite
element methods (FEM) and therefore open the possibility for very
problem adapted mesh handling. This comes with an easy realization of
local mesh refinement, coarsening and adaptation near singularities
and interfaces. In particular, the notion of such general meshes
naturally deal with ``hanging nodes''---allowing two edges of a
polygon to meet at a straight angle removes the notion of hanging
nodes altogether.  Virtual Element Methods (VEM)
(cf.~\cite{vemMMMAS2013,vemCMA2013,vemIMA2014,vemMMMAS2014,vemIMAJNA2014,vemSINUM2014,vemCMAME2014,MR3509090,MR3564679,ANTONIETTI2017,AntoniettiBerroneVeraniWeisser2019,VeigaRussoVacca2019}),
which have drawn inspiration from mimetic finite difference schemes,
constitute one active line of research in this direction.  Another
involves Boundary Element-Based Finite Element Methods (BEM-FEM)
(cf.~\cite{bemfemLNCSE2009,bemfemETNA2010,bemfemJNM2011,bemfemNM2011,bemfemSINUM2012,bemfemJCAM2014,bemfemCMA2014,HofreitherLangerWeisser2016,Weisser2017,bemfemCMAM2018,Weisser2019,Weisser2019Book}),
which have looked more toward the older Trefftz methods for
motivation. A similar strategy has been followed in our previous
work~\cite{Anand2018}, where a Nystr\"om approximation is applied for
the treatment of local boundary integral equations instead of a
boundary element method. The gained insights and flexibilities in that
work build the basis of the development in this paper.  A third line
of research involves generalized barycentric coordinates
(cf.~\cite{GRB2014,FGS2013,RGB2011a,GRB2010,NRS2014,Spring2014} and
the references in~\cite{Floater2015}), that mimic certain key
properties of standard barycentric coordinates over general element
shapes. The before mentioned approaches yield globally-conforming
discretizations, which is challenging on general meshes. However,
there has also been significant interest in various non-conforming
methods for polyhedral meshes.  We mention Compatible Discrete
Operator (CDO), Hybrid High-Order (HHO) schemes
(cf.~\cite{Bonelle2014,Bonelle2015a,Bonelle2015b,DiPietro2014,DiPietro2015a,DiPietro2015b,BottiDiPietro2018})
and Weak Galerkin (WG) schemes
(cf.~\cite{WangYe2013,WangYe2014,WangWang2014,MuWangYe2015c,MuWangYe2015b,MuWangYe2015,WangYe2016}),
as well as the recent adaptations of the discontinuous Petrov-Galerkin
method (cf.~\cite{Astaneh2018}), in this regard.

The present work builds upon~\cite{Anand2018} for second-order,
linear, elliptic boundary value problems posed on possibly curved
domains $\Omega\subset\RR^2$: Find $u\in\cH$ such that
\begin{align}\label{ModelProblem}
\int_\Omega A\nabla u\cdot\nabla v+(\mb{b}\cdot\nabla u +cu)v\,dx=\int_\Omega f v\,dx+\int_{\partial\Omega_N}gv\,ds\mbox{
for all }v\in\cH~,
\end{align}
where $\cH$ is some appropriate subspace of $H^1(\Omega)$
incorporating homogeneous Dirichlet boundary conditions, and standard
assumptions on the data $A,\mb{b},c,f,g$ ensure that the problem is
well-posed.  Although polygonal meshes are quite flexible and have
been studied intensively in recent years, there are relatively few
results in this direction that allow for curved elements in the spirit
of polygonal meshes, despite their natural appeal in fitting curved
domain boundaries and interfaces.  Early efforts at treating curved
boundaries in the finite element context, such as isoparametric
elements (cf.~\cite{Scott1973,Lenoir1986,Bernardi1989}), involve
(local) mappings of standard mesh cells to fit curved boundaries, and
these methods remain popular today.  More recently, isogeometric
analysis (cf.~\cite{CottrellHughesBazilevs2009}), which integrates the
use of splines both for modeling complex (curved) geometries and in
constructing finite elements on the resulting meshes.  This remains an
active area of research.  Two recent contributions employing
non-conforming methods over curved polygonal elements are described
in~\cite{Brezzi2006,BottiDiPietro2018}.  In terms of conforming
methods for treating curved boundaries that are in the same vein as
the conforming polygonal methods mentioned in the first paragraph, we
mention four, all of which are very recent.
In~\cite{BertoluzzaPennacchioPrada2019}, the curved boundary of the
domain is approximated by polygonal elements with straight edges and a
stabilization is constructed such that optimal rates of convergence
are retained for high order methods. In contrast,
\cite{VeigaRussoVacca2019} gives a first study of VEM with polygonal
elements having curved edges in 2D for the treatment of curved
boundaries and interfaces, but the construction results in
$\PP_p(K)\not\subset V_p(K)$, i.e. the polynomials of degree smaller
or equal~$p$ are locally not contained in the local approximation
space of order~$p$. This introduces additional difficulties in the
study of approximation properties.  More recently, these authors work
with a richer finite element space of functions for which
$\PP_p(K)\subset V_p(K)$, see~\cite{veiga2019virtual}.  This richer
space is referred to as ``Type 2 elements'' in our previous
contribution~\cite{Anand2018}, which considered the natural
incorporation of Dirichlet data on curved (or straight) portions.  Our
present work provides a practical realization, as well as supporting
interpolation theory, for Type 2 elements on very general planar
meshes consisting of curvilinear polygons.  As both our work
and~\cite{veiga2019virtual} must address many of the same theoretical
and practical concerns, it is unsurprising that there are strong
similarities between the approaches, and comparisons between them will
be of interest as both are developed further.

The paper is organized as follows: In Section~\ref{Spaces} we describe
local and global finite element spaces allowing for mesh cells that
are fairly general curvilinear polygons.  As is done in VEM and
BEM-FEM, as well as our previous Trefftz-Nystr\"om
contribution~\cite{Anand2018}, the local spaces are defined in terms
of Poisson problems with polynomial data on the mesh cells.  For
curved edges, we discuss two natural choices (those suggested
in~\cite{Anand2018}) for what it means to have polynomial boundary
data on curved edges.  The one that we believe is the more appropriate
of the two requires further explanation concerning how these edge
polynomial spaces and their bases can be constructed in practice, and
the bulk of Section~\ref{Spaces} is devoted to doing so.  Having
defined the local and global spaces, Section~\ref{Interpolation}
provides an interpolation operator, and establishes that interpolation
in these spaces is at least as good as interpolation by polynomials in
more standard (e.g. triangular, quadrilateral) meshes, as well as
interpolation in straight-edged polygonal meshes.  In brief, it is
established that the inclusion of $\PP_p(K)$ in our local spaces
$V_p(K)$ provides the expected approximation power, and the presence
of other (possibly singular) functions in $V_p(K)$ is not
detrimental. In~\cite{Anand2018}, an example illustrated that such
locally singular functions can even be beneficial for approximation,
and we develop that argument further in the final example of
Section~\ref{Experiments}.  Section~\ref{Computation} provides a
description of the integral equation approach we use for computing the
information about our basis functions that is needed in forming finite
element stiffness matrices.  This approach, which was developed with
our present application in mind, is discussed in detail in our
previous work~\cite{Ovall2018}, so we here provide a broader
description of the approach and some of its key features.  Finally,
Section~\ref{Experiments} provides several examples illustrating the
convergence of the finite element solution on different families of
meshes whose elements each have at least one curved edge, including a
comparison of convergence and conditioning for Type 1 and Type 2
elements on families of meshes whose curved edges are very close to
being straight.  As mentioned above, in the final example of this
section we both argue and demonstrate that it is possible to exploit
the approximation power of locally singular functions that may exist
in our finite element spaces in order to achieve optimal order
convergence without the typical adaptive refinement toward singular
points.

\section{Local and Global Spaces}\label{Spaces}
Following~\cite{CiarletBook,Ern2004}, let $K$ be a connected subset of
$\RR^2$, with non-empty interior and compact closure, whose Lipschitz
boundary, $\partial K$, is a simple closed contour consisting of a
finite union of smooth arcs, see Figure~\ref{ThreePolygons}.  We will
refer to $K$ as a \textit{mesh cell}, the arcs as \textit{edges}, and
will implicitly assume that adjacent edges meet at an (interior) angle
strictly between $0$ and $2\pi$, i.e. $K$ has no slits or cusps.  We
allow adjacent edges to meet at a straight angle.  The
\textit{vertices} of $K$ are those points where two adjacent edges
meet.  Given an integer $p$ and a mesh cell $K$, we define the space
$\PP_p(K)$ to be the polynomials of (total) degree at most $p$ on $K$,
with $\PP_p(K)=\{0\}$ for $p<0$, and the space $\PP_p(\partial K)$ to
be continuous functions on $\partial K$ whose trace on each edge $e$
is the trace of a function from $\PP_p(K)$ (equivalently, from
$\PP_p(\RR^2)$) on $e$, and we denote by $\PP_p(e)$ this edge trace
space.  In~\cite{Anand2018}, we refer to this definition of $\PP_p(e)$
as its \textit{Type 2} version; the \textit{Type 1} version consists
of functions on $e$ that are polynomials with respect to a natural
parameter, such as arc length, in a parametrization of $e$.
In order to avoid unnecessary complications in our description, we
will assume that no edge is a closed contour, i.e. each edge has two
distinct endpoints, and that $K$ is simply-connected.  This is not a
necessary constraint in practice, but allowing for even more general
elements, such as those having no vertices, or those that are not
simply-connected (i.e. have holes) requires using different integral
equation techniques.  We briefly highlight this issue in
Section~\ref{Computation}.

Let $\Omega\subset\RR^2$ be a bounded domain with Lipschitz boundary.
Given a partition $\cT=\{K\}$ of $\Omega$, we define $V_p(\cT)$ by
\begin{align}\label{GlobalSpace}
V_p(\cT)=\{v\in C(\overline\Omega):\,v_{\vert_{K}}\in V_p(K)\mbox{ for
  all }K\in\cT\}~,
\end{align}
where we define the space $V_p(K)$ as follows,
\begin{align}\label{LocalSpace}
v\in V_p(K) \mbox{ if and only if }\Delta v\in\PP_{p-2}(K)\mbox{ in
  }K\mbox{ and } v\vert_{\partial K}\in \PP_p(\partial K)~.
\end{align}
The space $V_p(K)$ clearly contains $\PP_p(K)$, but it
typically contains other functions as well.
A natural decomposition of $V_p(K)$ is $V_p(K)=V_p^{K}(K)\oplus
V_p^{\partial K}(K)$, where
\begin{align}
&v\in V_p^{K}(K) \mbox{ if and only if }\Delta v\in\PP_{p-2}(K)\mbox{
  in }K\mbox{ and } v=0\mbox{ on } \partial K~,\label{LocalSpaceInterior}\\
&v\in V_p^{\partial K}(K) \mbox{ if and only if }\Delta v=0\mbox{
  in }K\mbox{ and } v\vert_{\partial K}\in \PP_p(\partial K)~.\label{LocalSpaceBoundary}
\end{align}
The dimension of $V_p(K)$ is
\begin{align}\label{DimensionVPK}
\dim V_p(K)=\dim V_p^{K}(K)+\dim V_p^{\partial K}(K)=\dim
  \PP_{p-2}(K)+\dim \PP_p(\partial K)=\binom{p}{2}+\dim \PP_p(\partial K)~.
\end{align}
The dimension of $\PP_p(\partial K)$ depends on the number and nature
of the edges of $K$.  If $e$ is a straight edge, $\dim\PP_p(e)=p+1$,
but if $e$ is not a straight edge, the dimension of $\PP_p(e)$ can be
as high as $\binom{p+2}{2}$, as it is when $p=1$.  The dimension of
$\PP_p(e)$ more generally is given in the following proposition, a
proof of which may be found in~\cite[Theorem 7.1]{McLeodBook}, for example.
\begin{proposition}\label{PpeDimension}
Suppose that $f_m\in\PP_m(\RR^2)$ is an irreducible polynomial of
degree $m$, and that all points $x\in e$ satisfy $f_m(x)=0$.  It holds
that $\dim\PP_p(e)=\binom{p+2}{2}-\binom{p-m+2}{2}$.  If $e$ does not
lie on a real algebraic curve in the plane, then $\dim\PP_p(e)=\binom{p+2}{2}$.
\end{proposition}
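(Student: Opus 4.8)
The plan is to realize $\PP_p(e)$ as a quotient of the ambient polynomial space and to reduce the statement to a dimension count for an ideal. By the definition adopted above, $\PP_p(e)$ is the image of the restriction map $\rho_p\colon\PP_p(\RR^2)\to C(e)$, $g\mapsto g|_e$. Since $\PP_p(\RR^2)$ is finite dimensional, the rank--nullity theorem gives $\dim\PP_p(e)=\dim\PP_p(\RR^2)-\dim\ker\rho_p=\binom{p+2}{2}-\dim I_p$, where $I_p:=\{g\in\PP_p(\RR^2):g\equiv0\text{ on }e\}$. It therefore suffices to show that $\dim I_p=\binom{p-m+2}{2}$ under the algebraic hypothesis --- interpreting $\binom{p-m+2}{2}$ as $\dim\PP_{p-m}(\RR^2)$, hence as $0$ once $p<m$ --- and that $\dim I_p=0$ under the transcendental hypothesis. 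The transcendental case is immediate: a nonzero $g\in I_p$ would force $e$ to lie inside the real algebraic curve $\{g=0\}$, contrary to assumption, so $I_p=\{0\}$ and $\dim\PP_p(e)=\binom{p+2}{2}$.

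For the algebraic case I would establish the identity $I_p=f_m\cdot\PP_{p-m}(\RR^2)$. The inclusion ``$\supseteq$'' is clear, since $f_m$ vanishes identically on $e$ and multiplication by $f_m$ raises degree by exactly $m$. For ``$\subseteq$'', take $0\neq g\in I_p$ and suppose, toward a contradiction, that $f_m\nmid g$ in $\RR[x,y]$. Irreducibility of $f_m$ then means $f_m$ and $g$ have no common nonconstant factor over $\RR$; they have none over $\CC$ either, for if a nonconstant $d\in\CC[x,y]$ divided both, then --- as $f_m$ and $g$ are real --- so would $\bar d$, and hence $\operatorname{lcm}(d,\bar d)$, which may be chosen real, would be a nonconstant common factor over $\RR$. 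Two polynomials in $\CC[x,y]$ with no common nonconstant factor have only finitely many common complex zeros (B\'ezout's theorem, or a resultant argument), so in particular the set of common \emph{real} zeros is finite; but it contains $e$, which is a smooth arc with two distinct endpoints and hence infinite --- a contradiction. Therefore $f_m\mid g$, and writing $g=f_mh$ with $h\in\RR[x,y]$ the relation $\deg g=m+\deg h\le p$ forces $h\in\PP_{p-m}(\RR^2)$. Since $\RR[x,y]$ is an integral domain, $h\mapsto f_mh$ is a linear isomorphism of $\PP_{p-m}(\RR^2)$ onto $I_p$, so $\dim I_p=\dim\PP_{p-m}(\RR^2)=\binom{p-m+2}{2}$, and the claimed formula follows.

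The step I expect to be the main obstacle is the passage from ``no common factor over $\RR$'' to ``finitely many common complex (hence real) zeros'': one must rule out the possibility that $g$ and $f_m$ meet along an infinite real set even though they are coprime over $\RR$ --- precisely the phenomenon exhibited by the real locus of a polynomial such as $x^2+y^2$ that factors only over $\CC$. The conjugation argument together with B\'ezout's theorem disposes of this. A careful write-up should also verify that the degenerate configurations --- $f_m$ or $g$ depending on a single variable, and the range $p\le m$ --- are consistent with the stated formula under the binomial-coefficient convention above; this is routine.
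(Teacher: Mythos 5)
Your argument is correct and complete. Note that the paper does not actually prove this proposition; it simply cites \cite[Theorem 7.1]{McLeodBook}, so there is no internal proof to compare against, and a self-contained argument like yours is a genuine addition. Your route---identify $\PP_p(e)$ as the image of the restriction map, apply rank--nullity, and show the kernel $I_p$ equals $f_m\cdot\PP_{p-m}(\RR^2)$---is the standard one, and you correctly isolate and resolve the only delicate point: that coprimality of $f_m$ and $g$ over $\RR$ passes to $\CC$ (your conjugation/lcm normalization is sound, since $\operatorname{lcm}(d,\bar d)$ divides both and can be rescaled by a unit to be real), after which B\'ezout or a resultant forces the common real zero set to be finite, contradicting the infinitude of the arc $e$. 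The injectivity of $h\mapsto f_m h$ then gives $\dim I_p=\binom{p-m+2}{2}$. Two minor points to tidy in a final write-up: (i) the convention $\binom{p-m+2}{2}=\dim\PP_{p-m}(\RR^2)=0$ for $p<m$ is the right reading but literally fails for the generalized binomial coefficient when $p\leq m-3$ (e.g.\ $\binom{-1}{2}=1$), so state it explicitly as you do; (ii) the resultant/B\'ezout step should briefly dispose of the degenerate case where one of the polynomials is independent of the elimination variable, which you already flag as routine. Neither affects the validity of the proof.
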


\begin{figure}
\begin{center}
  \includegraphics[width=1.8in]{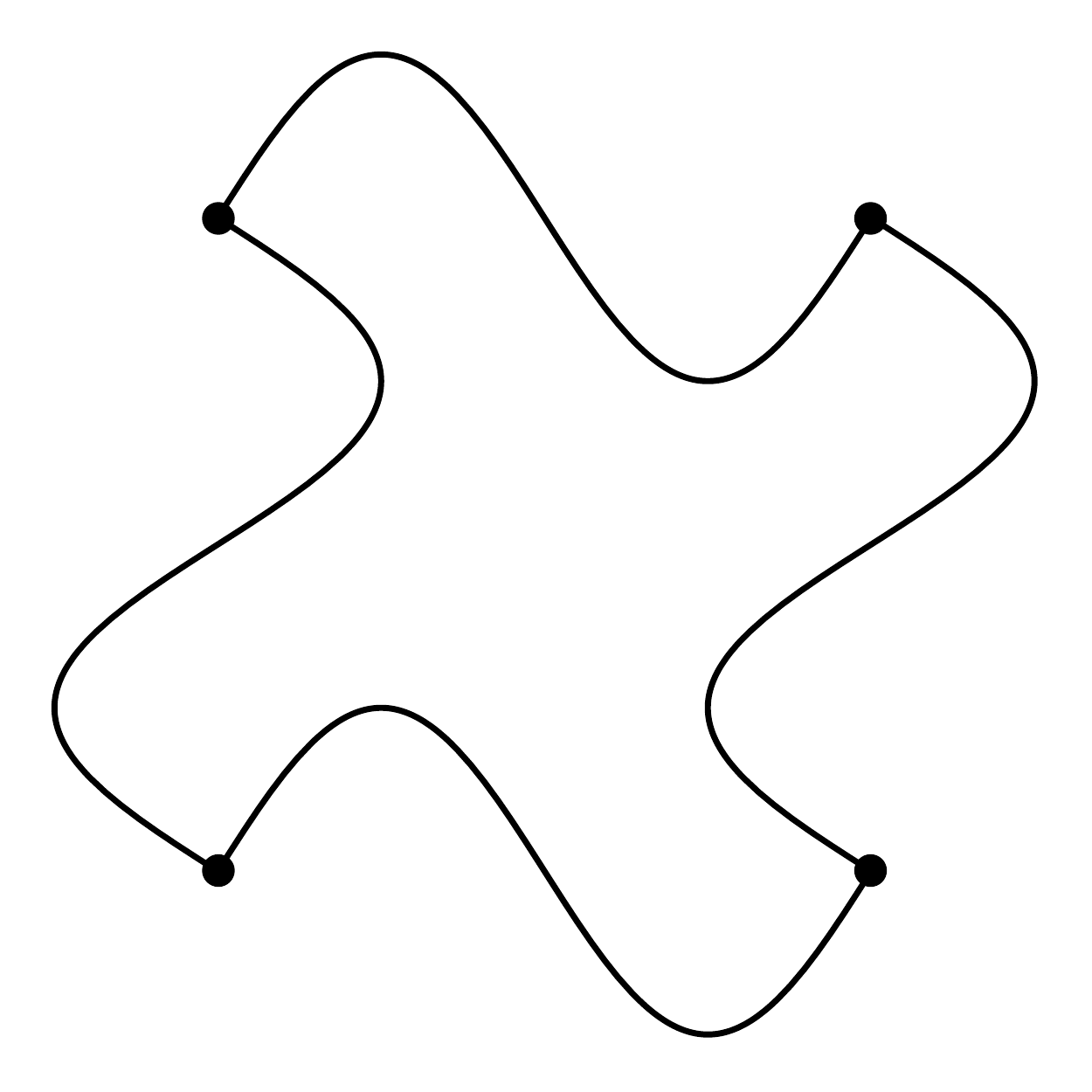}\quad
   \includegraphics[width=2.0in]{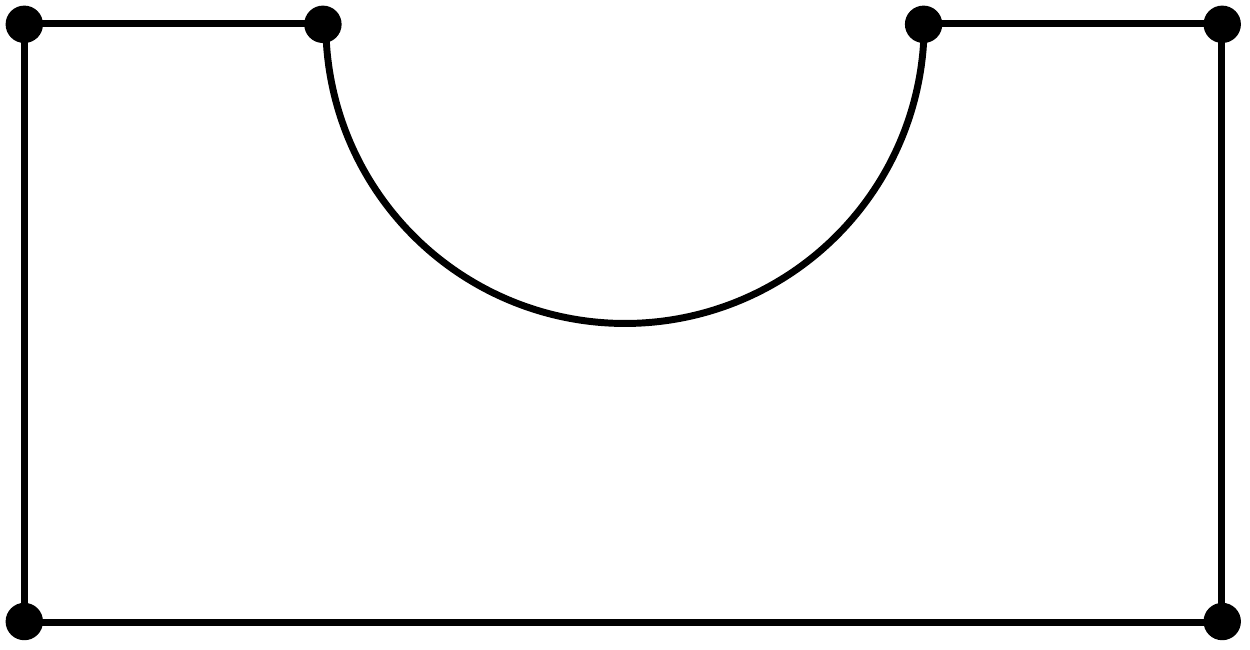}\quad
  \includegraphics[width=1.8in]{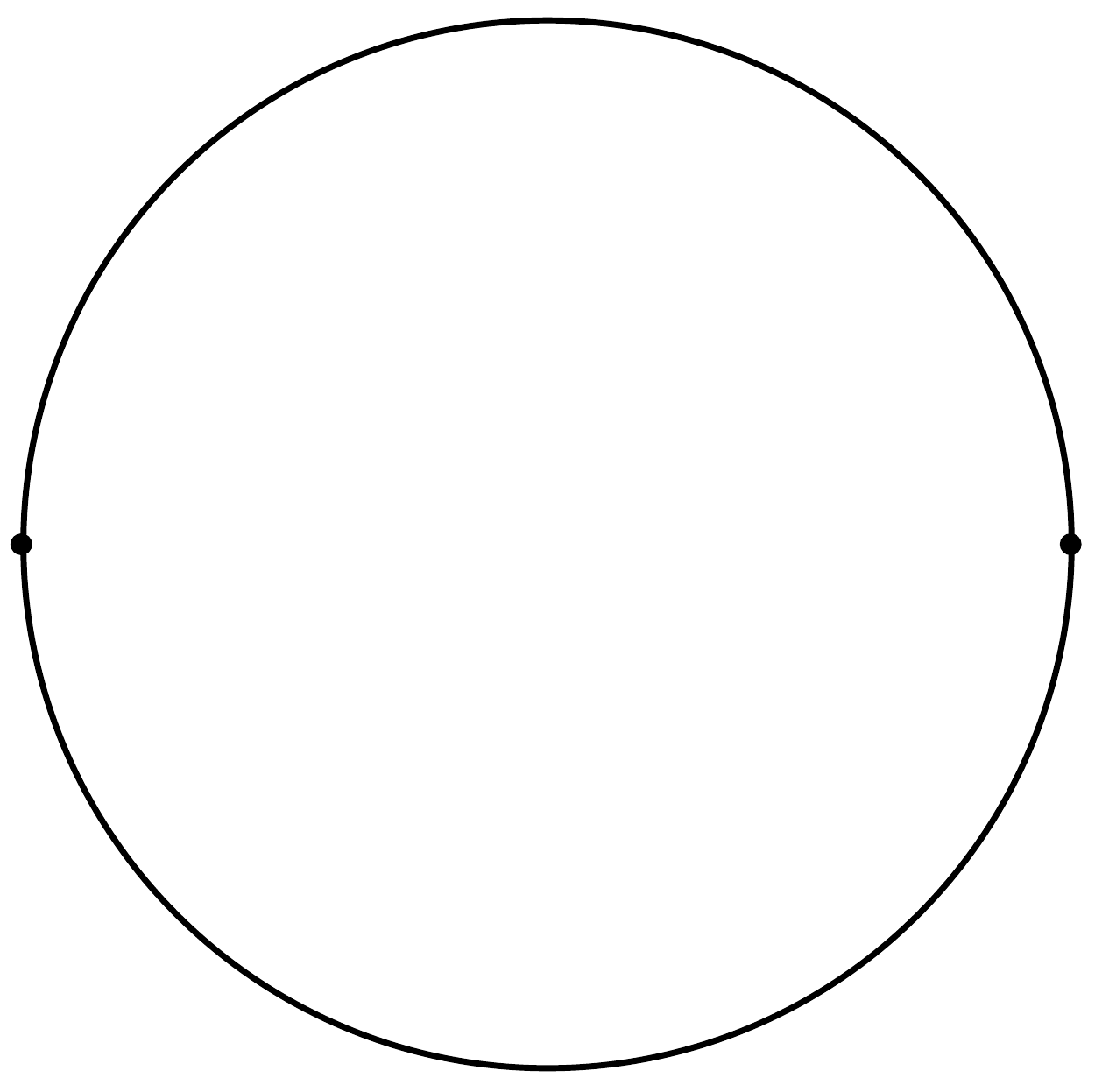}
\end{center}
\caption{\label{ThreePolygons} Shuriken (quadrilateral), Half-Washer
  (hexagon), Two-Edge Circle (bigon). }
\end{figure}
\begin{example}\label{P12Spaces}
  We consider the dimensions of the spaces
  $V_p(K)=V_p^{K}(K)\oplus V_p^{\partial K}(K)$, $p=1,2$, for each of
  the three mesh cells in Figure~\ref{ThreePolygons}.  We have $\dim
  V_1^{K}(K)=0$ and $\dim V_2^{K}(K)=1$.  The continuity of functions in
  $\PP_p(\partial K)$ implies that
\begin{align}\label{DimensionVpBoundary}
\dim V_p^{\partial K}(K)=\dim \PP_p(\partial K)=\sum_{e\subset\partial
  K}\dim\PP_p(e) - \#\mbox{ edges}~.
\end{align}
This formula holds for arbitrary $p$.  For $p=1,2$, we have
\begin{align*}
\dim V_1^{\partial K}(K)&=
(\#\mbox{ straight edges})+2(\#\mbox{ curved edges})~,\\
\dim V_2^{\partial K}(K)&=
2(\#\mbox{ straight edges})+4(\#\mbox{ curved conic edges})+5(\#\mbox{
                          curved non-conic edges})~.
\end{align*}
For the Half-Washer and Two-Edge Circle, the curved edges are circular
arcs.  For the Shuriken, the curved edges are not
segments of curved conic sections (ellipses, parabolas, hyperbolas).
Therefore, the dimensions of these spaces are
\begin{center}
\begin{tabular}{|c|ccc|}\hline
                      &Shuriken&Half-Washer&Two-Edge Circle\\\hline
$\dim V_1(K)$&0+(0)+2(4)=8&0+(5)+2(1)=7&0+(0)+2(2)=4\\
$\dim
  V_2(K)$&1+2(0)+4(0)+5(4)=21&1+2(5)+4(1)+5(0)=15&1+2(0)+4(2)+5(0)=9\\\hline
\end{tabular}
\end{center}
\end{example}

A basis for $\PP_{p-2}(K)$ implicitly defines a basis for
$V_p^{K}(K)$.  In Section~\ref{Computation}, we describe how we form
the associated local finite element linear systems over $V_p(K)$,
using integral equations to get the relevant information about our
basis functions.  At this stage, we merely state that it is convenient
to compute harmonic functions in this context.  To this end, let $z$
be a point in $K$, and $\alpha=(\alpha_1,\alpha_2)$ be a multi-index.
In~\cite{KarachikAntropova2010}, the authors provide an explicit
formula for a polynomial $q_\alpha\in\PP_p(K)$ satisfying
$\Delta q_\alpha=(x-z)^\alpha$; see also the beginning of
Section~\ref{Computation}.  A basis of $V_p^K(K)$,
$\{\phi_\alpha^K\in\PP_p(K):\,|\alpha|\leq p\}$, is given by
$\phi_\alpha^{K}=\psi_\alpha^{K}+q_\alpha$, where
\begin{align}\label{ElementBubble}
\Delta \psi_\alpha^{K}=0\mbox{ in }K\quad,\quad
 \psi_\alpha^{K}=-q_\alpha\mbox{ on }\partial K~.
\end{align}

Similarly, a basis of $\PP_p(\partial K)$ naturally leads to a basis
of $V^{\partial K}_p(K)$.  Given an edge $e$ in the mesh, we describe
an approach for obtaining a basis of $\PP_p(e)$ that is independent of
the mesh cell(s) of which it is an edge.  Let $e$ have vertices
$z_0,z_1$.  We choose a third point $z_2$ such that $z_0,z_1,z_2$ are
the vertices of an equilateral triangle (see
Figure~\ref{TraceSpace})---note that $z_2$ typically has nothing to do
with the underlying mesh $\cT$.  Given a global numbering of the
vertices of the mesh, this can be done in a consistent way by choosing
$z_2$ such that a counter-clockwise traversal of the boundary of the
triangle is consistent with traversing the edge $e$ from its smaller
to its larger vertex numbers.  Let
$\ell_0,\ell_1,\ell_2\in\PP_1(\RR^2)$ be the three barycentric
coordinates associated with these vertices.  Formulas for these three
functions are given by
\begin{align}\label{BarycentricFormulas}
\ell_j(x)=1-\frac{(x-z_j)\cdot R(z_{j-1}-z_{j+1})}{(\sqrt{3}/2)h^2}~,
\end{align}
where we understand the subscripts modulo $3$ (i.e. $z_{-1}=z_2$ and
$z_3=z_0$), and 
\begin{align*}
R=\begin{pmatrix}0&1\\-1&0\end{pmatrix}\quad,\quad
                          h=|z_1-z_0|\quad,\quad
                          z_2=\frac{z_1+z_0}{2}-R\cdot \frac{\sqrt{3}(z_1-z_0)}{2}~.
\end{align*}
Any basis for
$\PP_p(\RR^2)$ yields a spanning set for $\PP_p(e)$ by restriction,
and such a basis may be expressed in terms of linear
combinations of products of the barycentric coordinates.  We will consider
hierarchical bases expressed in
this way
(cf.~\cite{Adjerid2001,Beuchler2006,BabushkaSzaboBook}). 
For example, a hierarchical basis for $\PP_3(\RR^2)$ is
\begin{align}\label{P3Basis}
\left\{\ell_0,\ell_1,\ell_2, 4\ell_1\ell_2, 4\ell_0\ell_2,4\ell_0\ell_1,\frac{3\sqrt{3}}{2}\ell_1\ell_2(\ell_1-\ell_2), \frac{3\sqrt{3}}{2}\ell_0\ell_2(\ell_0-\ell_2), \frac{3\sqrt{3}}{2}\ell_0\ell_1(\ell_0-\ell_1),
  27\ell_0\ell_1\ell_2\right\}~,
\end{align}
Here, we have chosen the scaling on each function so that its maximum
value on the triangle, in magnitude, is $1$.  In any hierarchical
basis for $\PP_p(\RR^2)$, the only functions that do not vanish at
both $z_0$ and $z_1$ are $\ell_0$ and $\ell_1$.  A simple consequence
of this fact is that
\begin{proposition}
For any edge $e$, a hierarchical basis of $\PP_p(e)$ contains both
$\ell_0$ and $\ell_1$.
\end{proposition}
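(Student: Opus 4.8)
The plan is to reduce the statement to a one-point evaluation argument at the endpoints $z_0,z_1$ of $e$, using the property of hierarchical bases of $\PP_p(\RR^2)$ recorded just above the proposition. Recall first what a hierarchical basis of $\PP_p(e)$ is: one fixes a hierarchical basis $B=\{b_1,\dots,b_N\}$ of $\PP_p(\RR^2)$ (so that $\ell_0,\ell_1,\ell_2$ are its degree-one members), restricts to $e$ to obtain the spanning set $\{b_1|_e,\dots,b_N|_e\}$ of $\PP_p(e)$, and then traverses $b_1|_e,\dots,b_N|_e$ in order, retaining each one that is linearly independent of those already retained and discarding the rest; the retained functions constitute the hierarchical basis of $\PP_p(e)$. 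Since $z_0,z_1\in e$ and the $\ell_j$ are affine, $\ell_0|_e$ and $\ell_1|_e$ are genuine elements of $\PP_p(e)$, and it suffices to prove that each of them survives this selection.

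The key step is to show that $\ell_0|_e$ is linearly independent of the \emph{entire} remaining restricted spanning set $\{\,b|_e : b\in B,\ b\neq\ell_0\,\}$, and symmetrically for $\ell_1|_e$. For $\ell_0$: by the cited property, every $b\in B$ other than $\ell_0$ and $\ell_1$ vanishes at $z_0$, and $\ell_1(z_0)=0$ directly from the formulas in~\eqref{BarycentricFormulas}; hence $b(z_0)=0$ for every $b\in B\setminus\{\ell_0\}$. If there were a relation $\ell_0|_e=\sum_{b\neq\ell_0}c_b\,b|_e$, evaluating both sides at $z_0\in e$ would give $1=\ell_0(z_0)=\sum_{b\neq\ell_0}c_b\,b(z_0)=0$, a contradiction. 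The same argument with $z_0$ replaced by $z_1$ (now using $\ell_0(z_1)=0$ together with $b(z_1)=0$ for all other $b$) shows $\ell_1|_e$ is independent of $\{\,b|_e : b\neq\ell_1\,\}$.

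To finish, observe that if $\ell_0|_e$ were discarded by the selection procedure, it would have to lie in the span of the previously retained restrictions, all of which belong to $\{\,b|_e : b\neq\ell_0\,\}$; this contradicts the independence just established. Hence $\ell_0|_e$ is retained, i.e.\ belongs to the hierarchical basis of $\PP_p(e)$, and the identical reasoning gives the same for $\ell_1|_e$.

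There is no genuine analytic obstacle here; the only points requiring care are the precise meaning of ``hierarchical basis of $\PP_p(e)$'' (it is inherited from a hierarchical basis of $\PP_p(\RR^2)$ by the independence-preserving selection above) and the correct reading of the displayed property of hierarchical bases of $\PP_p(\RR^2)$ — that $\ell_0,\ell_1$ are the only basis functions not vanishing at both $z_0$ and $z_1$, so that every other basis function vanishes at each of $z_0$ and $z_1$ individually. Once these are pinned down, the evaluation-at-a-vertex trick does all the work, and in fact yields the slightly stronger fact that $\ell_0|_e$ and $\ell_1|_e$ are independent of \emph{all} other restricted basis functions, not merely of those preceding them in the hierarchy.
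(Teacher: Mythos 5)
Your proof is correct and follows essentially the same route the paper intends: the paper offers no detailed argument, stating only that in any hierarchical basis of $\PP_p(\RR^2)$ the sole functions not vanishing at both $z_0$ and $z_1$ are $\ell_0,\ell_1$, and calling the proposition a ``simple consequence''---your evaluation-at-the-endpoints argument is exactly that consequence made explicit. The added observation that $\ell_0|_e,\ell_1|_e$ are independent of \emph{all} other restricted basis functions (so they survive any selection procedure, not just the hierarchical one) is a nice touch but not a departure from the paper's approach.
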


As stated in Proposition~\ref{PpeDimension}, if $e$ lies on an
algebraic curve of order $m$, we know the dimension of $\PP_p(e)$.
However, it may be undesirable to make this determination in practice.
Regardless, we need a practical method for paring down a spanning set
for $\PP_p(e)$ to a basis.  Let $N=\binom{p+2}{2}$, and suppose that
$\{\ell_0,\ell_1,b_1,\ldots,b_{N-2}\}$ is a hierarchical spanning set
of $\PP_p(e)$, as described above.  The functions are listed in
increasing order of degree. The Gram matrix $m_{ij}=\int_e b_ib_j\,ds$
may be used to determine the remaining basis functions (in addition to
$\ell_0,\ell_1$) for $\PP_p(e)$.  We recall that
$\mathrm{rank}(M)=\dim\mathrm{span}\{b_1,\ldots,b_{N-2}\}$
(cf.~\cite[Theorem 7.2.10]{HornJohnsonBook}).  A basis for
$\mathrm{span}\{b_1,\ldots,b_{N-2}\}$  consisting of some subset of
these functions may be determined using a rank-revealing Cholesky
decomposition of $M$ (cf.~\cite{Higham1990,Hansen2001,Gu2004}). 
We state a
slightly more general version of this result in the following
proposition, and then provide a simple algorithm for selecting a basis
of $\mathrm{span}\{b_1,\ldots,b_{N-2}\}$, and hence of $\PP_p(e)$.
\begin{proposition}\label{BasisSelection}  Let $m_{ij}=\ip{b_j}{b_i}$,
  $1\leq i,j\leq n$ be the Gram matrix associated with an
  inner-product $\ip{\cdot}{\cdot}$ and a list of vectors $(b_1,\dots,
  b_n)$.  Let $P^TMP=R^TR$ be a rank-revealing Cholesky decomposition,
  where $P$ is a permutation matrix, and $R=\begin{pmatrix}R_{11}&R_{12}\end{pmatrix}$,
 with the $r\times r$ matrix $R_{11}$ having strictly positive
 entries.  Then $\{b_{p(j)}:\,1\leq j\leq r\}$ is a basis for
 $\mathrm{span}\{b_1,\ldots,b_{n}\}$, where $p$ is the permutation on
 $\{1,\ldots,n\}$ defined by $P\mb{e}_j=\mb{e}_{p(j)}$, and
 $\{\mb{e}_1,\ldots,\mb{e}_n\}$ are the standard coordinate vectors.
\end{proposition}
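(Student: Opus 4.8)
The plan is to reduce the statement to the elementary linear algebra of Gram matrices and then exploit the block structure of $R$. First I would observe that, because $\mb{e}_j^T(P^TMP)\mb{e}_k = m_{p(j)p(k)} = \ip{b_{p(k)}}{b_{p(j)}}$, the matrix $P^TMP$ is exactly the Gram matrix of the reordered list $(b_{p(1)},\dots,b_{p(n)})$. So it suffices to prove the following unpermuted version: if $M$ is the Gram matrix of vectors $(v_1,\dots,v_n)$ and $M=R^TR$ with $R=\begin{pmatrix}R_{11}&R_{12}\end{pmatrix}$, $R_{11}$ an invertible $r\times r$ matrix (invertibility being guaranteed by the hypothesis on $R_{11}$, in particular that its diagonal entries are strictly positive), then $\{v_1,\dots,v_r\}$ is a basis of $\mathrm{span}\{v_1,\dots,v_n\}$; one then specializes to $v_j=b_{p(j)}$.

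The heart of the argument is a standard ``square-root'' identification. For any coefficient vector $c=(c_1,\dots,c_n)^T$ one has $\|\sum_j c_j v_j\|^2 = c^TMc = c^TR^TRc = \|Rc\|^2$, so $\sum_j c_j v_j = 0$ if and only if $Rc=0$. Hence the linear map sending $v_j$ to the $j$-th column $r_j:=R\mb{e}_j$ of $R$ is well defined on $\mathrm{span}\{v_1,\dots,v_n\}$ and injective, so it is an isomorphism onto the column space of $R$; in particular a subfamily $\{v_j:j\in S\}$ is a basis of $\mathrm{span}\{v_1,\dots,v_n\}$ exactly when $\{r_j:j\in S\}$ is a basis of the column space of $R$.

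It then remains to check that the first $r$ columns $r_1,\dots,r_r$, which are precisely the columns of $R_{11}$, form a basis of the column space of $R$. They are linearly independent because $R_{11}$ is invertible. On the other hand $\dim(\text{column space of }R)=\mathrm{rank}(R)=\mathrm{rank}(R^TR)=\mathrm{rank}(P^TMP)=\mathrm{rank}(M)$, and this equals $r$: indeed $\mathrm{rank}(R)\le r$ since $R$ has only $r$ rows, while $\mathrm{rank}(R)\ge\mathrm{rank}(R_{11})=r$. (This also reconfirms, via the cited identity $\mathrm{rank}(M)=\dim\mathrm{span}\{b_1,\dots,b_n\}$, that $r$ really is the dimension of the span.) Thus $r_1,\dots,r_r$ are $r$ linearly independent vectors in an $r$-dimensional space, hence a basis of the column space of $R$; pulling back through the isomorphism of the previous paragraph shows that $\{b_{p(1)},\dots,b_{p(r)}\}$ is a basis of $\mathrm{span}\{b_1,\dots,b_n\}$.

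I expect the only real subtlety to be the well-definedness of the map $v_j\mapsto r_j$, i.e.\ that every linear relation among the $v_j$ is forced on the $r_j$ and conversely; this is precisely where the positivity of the inner product enters, through $c^TMc=\|Rc\|^2$. Everything else is bookkeeping with the $2$-block partition of $R$ and the elementary identities $\mathrm{rank}(R^TR)=\mathrm{rank}(R)$ and $\mathrm{rank}(QAQ^{-1})=\mathrm{rank}(A)$. The same reasoning applies verbatim over $\CC$ upon replacing transposes by conjugate transposes, since it is the inner product, not the choice of field, that supplies the needed positivity.
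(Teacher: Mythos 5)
Your proof is correct. Note that the paper does not actually supply a proof of Proposition~\ref{BasisSelection}: it states the result after recording the rank identity $\mathrm{rank}(M)=\dim\mathrm{span}\{b_1,\ldots,b_{N-2}\}$ (attributed to \cite[Theorem 7.2.10]{HornJohnsonBook}) and deferring the rank-revealing Cholesky machinery to \cite{Higham1990,Hansen2001,Gu2004}, so your argument fills in what the authors leave to citations. Your route is a clean, self-contained one: the observation that $c^TMc=\|Rc\|^2$ identifies the linear relations among the $b_{p(j)}$ with those among the columns of $R$, after which everything reduces to the invertibility of $R_{11}$ and the bound $\mathrm{rank}(R)\le r$ from the row count. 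This is essentially the same mechanism the cited rank-revealing Cholesky literature relies on, but made explicit, and it has the advantage of showing precisely where positive semi-definiteness of the Gram matrix enters (the well-definedness of $v_j\mapsto r_j$). One small caveat: the hypothesis that $R_{11}$ has ``strictly positive entries'' does not by itself force invertibility of a general matrix; you correctly lean on the implicit Cholesky structure ($R_{11}$ upper triangular with positive diagonal), and it would be worth saying that explicitly rather than parenthetically.
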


The following algorithm is essentially Gaussian elimination
with complete pivoting for positive semi-definite matrices, where the pivoting is done in place.
\begin{algorithm}\label{GECP} Let $m_{ij}=\ip{b_j}{b_i}$,
  $1\leq i,j\leq n$, be the Gram matrix associated with an
  inner-product $\ip{\cdot}{\cdot}$ and a list of vectors $(b_1,\dots,
  b_n)$.  Upon
termination of the following algorithm, $\mathrm{index}=\{p(j):\,1\leq j\leq r\}$, the indices of
a basis $\{b_{p(j)}:\, 1\leq j\leq r\}$ of  $\mathrm{span}\{b_1,\ldots,b_{n}\}$:
\begin{center}
\begin{minipage}{3.0in}
\begin{itemize}
\item[] $\mathrm{index}=\{\}$
\item[] $k=\argmax\{m_{jj}:\,1\leq j\leq n\}$
\item[]  while $m_{kk}>0$
\begin{itemize}
\item[] $\mathrm{index}=\mathrm{index}\cup\{k\}$
\item[] $M= M-m_{kk}^{-1}\mb{m}_k\mb{m}_k^T$
\item[] $k=\argmax\{m_{jj}:\,1\leq j\leq n\}$
\end{itemize} 
\item[] end
\end{itemize}
\end{minipage}
\end{center}
Here, $\mb{m}_k$ is the $k^{th}$ column of the current $M$.  
\end{algorithm}
In practice, one replaces the condition $m_{kk}>0$ with $m_{kk}>\tau$
for some suitably small tolerance $\tau>0$.  Some speed-up of this
basic algorithm may be achieved by exploiting the fact that previous
reduction reduction steps, $M= M-m_{kk}^{-1}\mb{m}_k\mb{m}_k^T$, have
zeroed out the rows and columns in the index set, so these are no
longer needed for further reductions.  In the following example, we
use $\tau=10^{-12}$ in our determination of a basis for $\PP_3(e)$.

\begin{example}\label{CubicEdgeSpaces}
For any edge $e$, a hierarchical spanning set for $\PP_3(e)$ is given by~\eqref{P3Basis}
where we have restricted the domains of these functions to $e$.
Let $e$ be parameterized by $x(t)=(\cosh t,(\sinh t)/2)$, $0\leq t\leq
1$, so $e$ is part of the hyperbola $x^2-4y^2=1$.  We know
in advance that $\dim\PP_3(e)=10-3=7$, and $\ell_0,\ell_1$ will be
part of our basis for $\PP_3(e)$, so we must select five of the
remaining eight functions,
\begin{align*}
\left\{\ell_2, 4\ell_1\ell_2, 4\ell_0\ell_2,4\ell_0\ell_1,\frac{3\sqrt{3}}{2}\ell_1\ell_2(\ell_1-\ell_2), \frac{3\sqrt{3}}{2}\ell_0\ell_2(\ell_0-\ell_2), \frac{3\sqrt{3}}{2}\ell_0\ell_1(\ell_0-\ell_1),
  27\ell_0\ell_1\ell_2\right\}~,
\end{align*}
 to complete our basis.  Taking the functions in this order, and
 forming the associated Gram matrix, we determine that the indices are
 (given in the order computed): 4, 7, 8, 3, 5;  the knowledge that we
 only needed five functions was not used in this computation.
Therefore, our basis
 for $\PP_3(e)$ is given by 
\begin{align*}
\left\{\ell_0,\ell_1,4\ell_0\ell_2,4\ell_0\ell_1,\frac{3\sqrt{3}}{2}\ell_1\ell_2(\ell_1-\ell_2), \frac{3\sqrt{3}}{2}\ell_0\ell_1(\ell_0-\ell_1),
  27\ell_0\ell_1\ell_2\right\}~.
\end{align*}
These basis functions are plotted, as functions of the parameter $t$,
in Figure~\ref{TraceSpace}, together with the edge $e$ and associated triangle
used to define the barycentric coordinates $\ell_0,\ell_1,\ell_2$.
\begin{figure}
\begin{center}
\includegraphics[width=2.0in]{./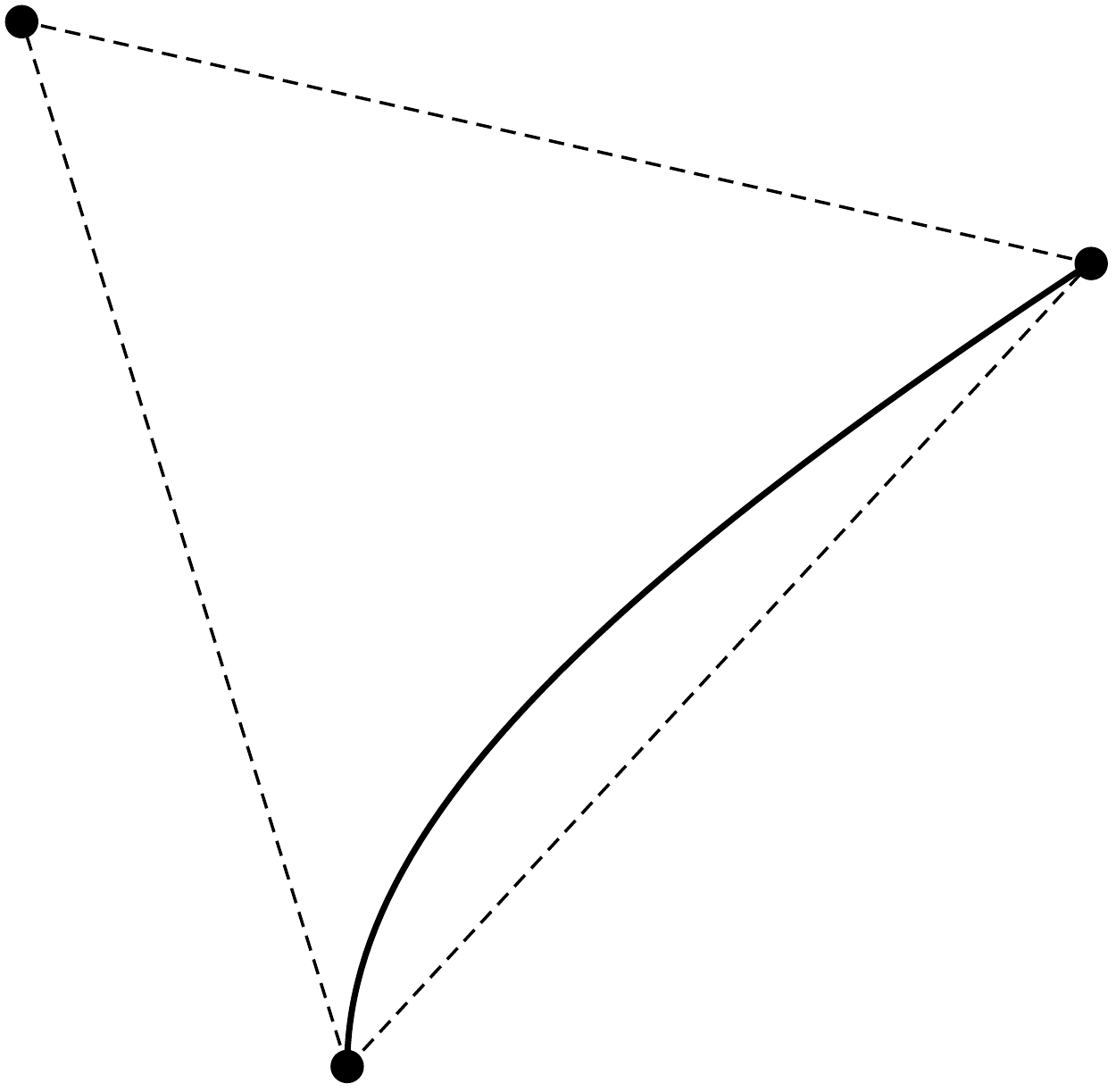}
\includegraphics[width=3.0in]{./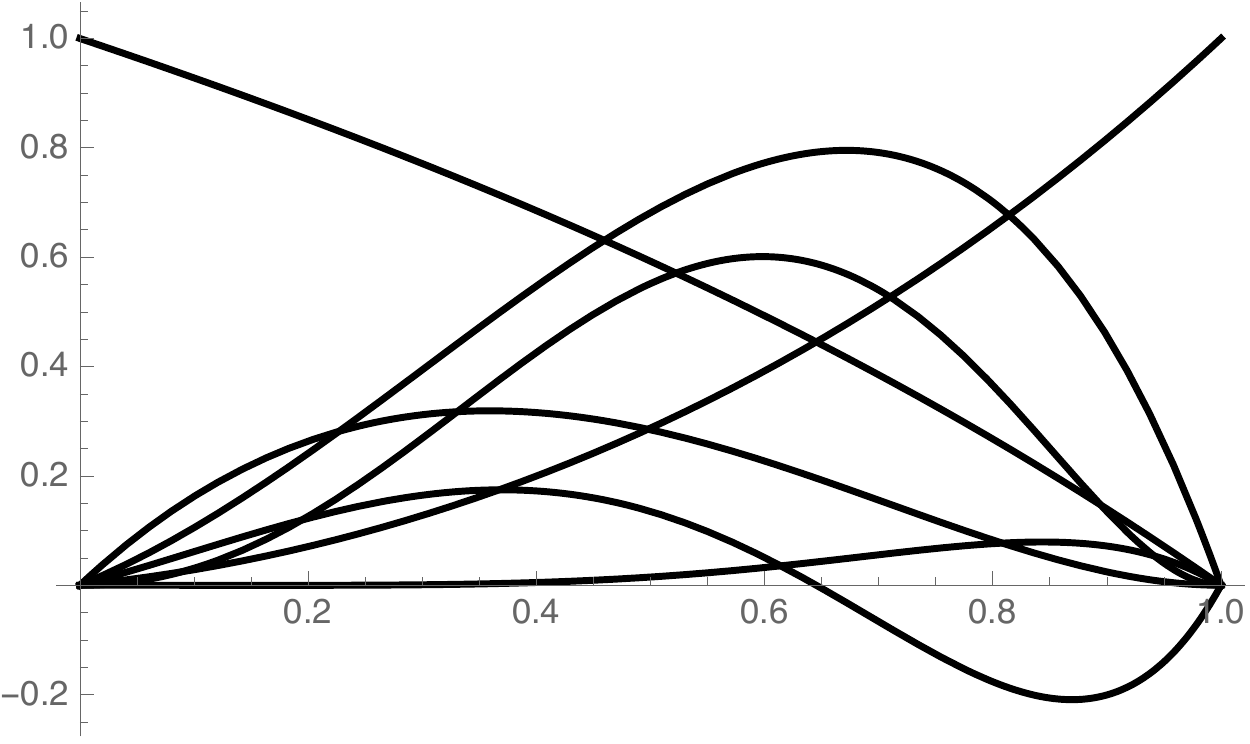}
\end{center}
\caption{\label{TraceSpace} At left,
  the edge $e$ and associated triangle (dashed) for Example~\ref{CubicEdgeSpaces}.  At right, plots of a
basis for $\PP_3(e)$ with respect to a parametrization, $x=x(t)$, of $e$.}
\end{figure}
As a matter of interest, we note that, when the reduction algorithm
was used, with $\tau=10^{-12}$ as before, on the entire spanning set
\eqref{P3Basis}, a different basis was obtained,
\begin{align*}
\left\{\ell_0,\ell_1, 4\ell_1\ell_2, 4\ell_0\ell_2,4\ell_0\ell_1,\frac{3\sqrt{3}}{2}\ell_1\ell_2(\ell_1-\ell_2), \frac{3\sqrt{3}}{2}\ell_0\ell_2(\ell_0-\ell_2)\right\}~.
\end{align*}
\end{example}

\begin{remark}\label{DiagonalRescaling}
A natural variant of Algorithm~\ref{GECP} that may be used if the
diagonal entries of $M$ are all non-zero is to diagonally rescale its
entries, $m_{ij}\longleftarrow m_{ij}/\sqrt{m_{ii}m_{jj}}$, before
beginning the elimination loop.  If we do this for
Example~\ref{CubicEdgeSpaces}, the resulting basis for $\PP_3(e)$ is
\begin{align*}
\left\{\ell_0,\ell_1,\ell_2, \frac{3\sqrt{3}}{2}\ell_1\ell_2(\ell_1-\ell_2), \frac{3\sqrt{3}}{2}\ell_0\ell_2(\ell_0-\ell_2), \frac{3\sqrt{3}}{2}\ell_0\ell_1(\ell_0-\ell_1),
  27\ell_0\ell_1\ell_2\right\}~,
\end{align*}
regardless of whether we use the entire spanning set~\eqref{P3Basis},
or remove $\{\ell_0,\ell_1\}$, in constructing $M$.
\end{remark}

\begin{remark}\label{AltBasis}
We describe an alternative to the barycentric
coordinates~\eqref{BarycentricFormulas} associated with $e$ that acts
more like a local cartesian coordinate system.  Using the same
notation $z_0,z_1$, $h$ and $R$, we define
\begin{align}\label{AltBary}
\tilde{\ell}_0(x)=\frac{(z_1-x)\cdot(z_1-z_0)}{h^2}\quad,\quad
\tilde{\ell}_1(x)=\frac{(x-z_0)\cdot(z_1-z_0)}{h^2}\quad,\quad
\tilde{\ell}_2(x)=\frac{(z_1-x)\cdot R(z_1-z_0)}{h^2}~.
\end{align} 
Straight-forward manipulations reveal that
\begin{align*}
\ell_0=\tilde\ell_0-\frac{\tilde\ell_2}{\sqrt{3}}\quad,\quad
\ell_1=\tilde\ell_1-\frac{\tilde\ell_2}{\sqrt{3}}\quad,\quad
\ell_2=\frac{\tilde{\ell}_2}{\sqrt{3}/2}~,
\end{align*}
so it is simple to translate between coordinate systems if desired.
\end{remark}

Having now properly defined $V_p(\cT)\subset H^1(\Omega)$, the discrete version
of~\eqref{ModelProblem} is to find $\hat{u}\in V_p(\cT)\cap\cH$ such that 
\begin{align}\label{DiscModelProblem}
\int_\Omega A\nabla \hat{u}\cdot\nabla v+(\mb{b}\cdot\nabla{u}+c\hat{u})v\,dx=\int_\Omega f v\,dx+\int_{\partial\Omega_N}gv\,ds\mbox{
for all }v\in V_p(\cT)\cap\cH~.
\end{align}
The intersection, $V_p(\cT)\cap\cH$, ensures that we respect any
homogeneous Dirichlet boundary conditions inherent in $\cH$.  
For standard finite elements, as well as those defined on more general
polygonal meshes,
common assumptions on the data ensure that the finite element error
$\|u-\hat{u}\|_{H^1(\Omega)}$ is controlled by interpolation error
$\|u-\cI u\|_{H^1(\Omega)}$, where $\cI u\in V_p(\cT)$ is some
appropriately defined interpolant of $u$.  In the next section, we
define a projection-based interpolation operator appropriate for our
setting, and prove that it yields the desired approximation properties.

\section{Interpolation}\label{Interpolation}
In this section, we describe a local interpolation scheme 
\begin{align}
\cI_K: W(K)=\{v\in C(\overline K)\cap H^1(K):\,\Delta v\in L^2(K)\}\to V_p(K)~,
\end{align}
and establish local error estimates under stronger regularity
assumptions.  By construction, the local interpolation operator will
define a global interpolation operator
\begin{align}
\cI: W=\{v\in C(\overline \Omega)\cap H^1(\Omega):\,\Delta v\in L^2(\Omega)\}\to V_p(\cT)
\end{align}
by $(\cI v){\vert_{K}}=\cI_K v$.

Our definition of $\cI_K$ is motivated by the decomposition $V_p(K)=V_p^K(K)\oplus V_p^{\partial K}(K)$.
We begin with a related decomposition of $v$ as $v=v^K+v^{\partial K}$, where 
\begin{align}\label{vDecomposition}
\begin{cases}\Delta v^K=\Delta v&\mbox{ in }K\\v^K=0&\mbox{ on }\partial K\end{cases}
\quad,\quad
\begin{cases}\Delta v^{\partial K}=0&\mbox{ in }K\\v^{\partial K}=v&\mbox{ on }\partial K\end{cases}
~.
\end{align}
We define $\cI_K$ by an analogous decomposition
$\cI_K v=\cI_K^K v+\cI_K^{\partial K}v$, where $\cI_K^K v\in V_p^K(K)$ and $\cI_K^{\partial K} v\in V_p^{\partial K}(K)$ are given by
\begin{align}\label{LocalInterpolant1}
\begin{cases}\Delta (\cI_K^K v)=q^K&\mbox{ in }K\\\cI_K^K v=0&\mbox{ on }\partial K\end{cases}
\quad,\quad
\begin{cases}\Delta (\cI_K^{\partial K} v )=0&\mbox{ in }K\\\cI_K^{\partial K} v=q^{\partial K}&\mbox{ on }\partial K\end{cases}
~.
\end{align}
In order to complete this definition, we must define
$q^K\in\PP_{p-2}(K)$ and $q^{\partial K}\in\PP_p(\partial K)$.  We
define $q^K$ by
\begin{align}\label{LocalInterpolant2}
\int_K (\Delta v - q^K) \phi\,dx=0\mbox{ for all }\phi\in \PP_{p-2}(K)~.
\end{align}
We define  $q^{\partial K}$ by defining it on each edge of
$\partial K$.
For a non-trivial open subset $\Gamma\subset \partial K$, we use the inner-product
\begin{align}\label{H1_2InnerProduct}
(\phi,\psi)_{H^{1/2}(\Gamma)}=\int_{\Gamma}\phi\psi\,ds+
\int_{\Gamma}\int_{\Gamma}\frac{(\phi(x)-\phi(y)) (\psi(x)-\psi(y))}{|x-y|^2}\,ds(x)\,ds(y)~,
\end{align}
with $\|\cdot\|_{H^{1/2}(\Gamma)}$ as the associated norm.
Below, we take $\Gamma$ to be either the entire boundary,
$\partial K$, or a single edge, $e$.  Fix an edge $e$ of $\partial K$, having endpoints $z_0,z_1$, and let
$\PP_{p,0}(e)=\{w\in\PP_p(e):\,w(z_0)=w(z_1)=0\}$.  We define $q_e\in\PP_m(e)$
by the conditions
\begin{align}\label{LocalInterpolant3}
q_e(z_0)=v(z_0)\quad,\quad q_e(z_1)=v(z_1)\quad,\quad (v-q_e,w)_{H^{1/2}(e)}=0
\mbox{ for all }w\in \PP_{p,0}(e)~.
\end{align}
Finally, $q^{\partial K}$ is defined by $(q^{\partial K}){\vert_e}=q_e$.

In several places below, it will be convenient to use the following
basic result.  Suppose that $\psi,\phi\in H^1(K)$, and $\Delta \psi=0$ in $K$ and
$\phi=0$ on $\partial K$.  Then
$\int_K\nabla\psi\cdot\nabla\phi\,dx=0$, so 
$|\psi+\phi|_{H^1(K)}^2=|\psi|_{H^1(K)}^2 +|\phi|_{H^1(K)}^2$.  For example, we have
\begin{align}\label{InterpolationPythagorean}
|v-\cI_K v|_{H^1(K)}^2= |v^K-\cI_K^K v|_{H^1(K)}^2+|v^{\partial K}-\cI_K^{\partial K} v|_{H^1(K)}^2~,
\end{align}
and we consider both contributions to the interpolation error in turn.
In the proofs below, we use $c$ as a constant that may vary from one
appearance to the next.   Throughout, $h_K$ denotes the diameter of $K$.
 We first consider $v^K-\cI_K^K v$.

\begin{proposition}\label{InterpolationBound1}
Suppose that $v\in H^1(K)$ and $\Delta v\in H^{p-1}(K)$ for some
$p\geq 1$.  There is a
scale-invariant constant $c=c(p,K)>0$ for which
\begin{align*}
|v^K-\cI_K^K v|_{H^1(K)}&\leq c h_K^{p} |\Delta v|_{H^{p-1}(K)}
\quad,\quad 
\|v^K-\cI_K^K v\|_{L^2(K)}\leq c h_K^{p+1} |\Delta v|_{H^{p-1}(K)}~.
\end{align*}
\end{proposition}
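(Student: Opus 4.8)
The plan is to reduce the estimate for $v^K - \cI_K^K v$ to a standard estimate for the solution of the Dirichlet problem for the Laplacian with polynomial approximation of the right-hand side. First I would observe that by \eqref{vDecomposition} and \eqref{LocalInterpolant1}, the difference $w := v^K - \cI_K^K v \in H^1_0(K)$ satisfies $\Delta w = \Delta v - q^K$ in $K$, where $q^K$ is the $L^2(K)$-orthogonal projection of $\Delta v$ onto $\PP_{p-2}(K)$ by \eqref{LocalInterpolant2}. Testing the weak form of $\Delta w = \Delta v - q^K$ against $w$ itself gives $|w|_{H^1(K)}^2 = -\int_K (\Delta v - q^K) w\,dx$, and since $\Delta v - q^K \perp \PP_{p-2}(K)$ in $L^2(K)$ we may subtract any such polynomial from $w$ before applying Cauchy–Schwarz; combined with a Poincaré inequality $\|w\|_{L^2(K)} \le c\, h_K |w|_{H^1(K)}$ (valid on a connected Lipschitz domain with the right scaling in $h_K$) this yields
\begin{align*}
|w|_{H^1(K)} \le c\, h_K \inf_{r \in \PP_{p-2}(K)} \|\Delta v - q^K - r\|_{L^2(K)} = c\, h_K \inf_{r \in \PP_{p-2}(K)} \|\Delta v - r\|_{L^2(K)}~,
\end{align*}
the last equality because $q^K \in \PP_{p-2}(K)$. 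Then a Bramble–Hilbert / Dupont–Scott argument bounds the best $\PP_{p-2}$-approximation of $\Delta v$ in $L^2(K)$ by $c\, h_K^{p-1} |\Delta v|_{H^{p-1}(K)}$, giving the $H^1$-seminorm estimate with the factor $h_K \cdot h_K^{p-1} = h_K^p$.

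For the $L^2(K)$ estimate I would use an Aubin–Nitsche duality argument. Let $g \in L^2(K)$, and let $\zeta \in H^1_0(K)$ solve $-\Delta \zeta = g$ in $K$; then $\int_K w\, g\,dx = \int_K \nabla w \cdot \nabla \zeta\,dx = -\int_K (\Delta v - q^K)\, \zeta\,dx$. Since $\Delta v - q^K \perp \PP_{p-2}(K)$, we can replace $\zeta$ by $\zeta - \pi\zeta$ for any polynomial approximation $\pi\zeta \in \PP_{p-2}(K)$; but it suffices here to use just $p-2 \ge -1$, i.e. to subtract the $L^2$-projection onto constants when $p \ge 2$ (and nothing when $p = 1$, where $\PP_{p-2} = \{0\}$ and one instead uses $w \in H^1_0$ directly). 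Applying Cauchy–Schwarz, the already-established $H^1$-bound on $\|\Delta v - q^K\|_{L^2}$-type quantity, and the elliptic regularity / Poincaré estimate $\|\zeta - \pi\zeta\|_{L^2(K)} \le c\, h_K |\zeta|_{H^1(K)} \le c\, h_K \|g\|_{L^2(K)}$, then taking the supremum over $g$ with $\|g\|_{L^2(K)} = 1$, produces the extra power of $h_K$, i.e. $\|w\|_{L^2(K)} \le c\, h_K^{p+1} |\Delta v|_{H^{p-1}(K)}$.

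The main obstacle is making the constant genuinely scale-invariant and tracking the powers of $h_K$ correctly through the Poincaré inequality, the elliptic regularity estimate, and the Bramble–Hilbert estimate simultaneously. The clean way to handle this is to work on a reference configuration: rescale $K$ to $\hat K = h_K^{-1} K$ (or a representative of its shape class) by the dilation $x \mapsto x/h_K$, prove all the inequalities on $\hat K$ where the constants depend only on $p$ and the shape of $\hat K$, and then transfer back, carefully bookkeeping the Jacobian factors that appear in $\|\cdot\|_{L^2}$, $|\cdot|_{H^1}$, and $|\cdot|_{H^{p-1}}$ under the dilation. A subtlety worth flagging is that the "scale-invariant constant $c = c(p,K)$" must in fact depend on $K$ only through quantities invariant under dilation (e.g. a chunkiness/shape-regularity parameter), which is exactly what the reference-domain argument delivers; no uniformity over all cells is claimed at this stage, only the correct $h_K$-scaling for a fixed cell shape. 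The Bramble–Hilbert step itself is standard once one has a bounded linear extension of $\Delta v$ from $\hat K$ to a ball, or one invokes the Dupont–Scott polynomial approximation theorem directly on star-shaped domains.
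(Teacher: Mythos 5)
Your $H^1$-seminorm argument is essentially identical to the paper's: the energy identity $|v^K-\cI_K^K v|_{H^1(K)}^2=-\int_K(\Delta v-q^K)(v^K-\cI_K^K v)\,dx$, Cauchy--Schwarz, the Poincar\'e--Friedrichs inequality $\|w\|_{L^2(K)}\leq h_K|w|_{H^1(K)}$ for $w\in H^1_0(K)$, the best-approximation property of the $L^2$-projection $q^K$ over $\PP_{p-2}(K)$, and Bramble--Hilbert. Where you diverge is the $L^2$ estimate: the paper simply applies Poincar\'e--Friedrichs once more to $v^K-\cI_K^K v\in H^1_0(K)$, which immediately upgrades $ch_K^{p}$ to $ch_K^{p+1}$; your Aubin--Nitsche duality argument is a valid but heavier alternative that buys nothing here, since the function being estimated already vanishes on $\partial K$ and the extra power of $h_K$ comes for free. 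Two small points on the duality route: (i) your chain $\|\zeta-\pi\zeta\|_{L^2(K)}\leq c\,h_K|\zeta|_{H^1(K)}\leq c\,h_K\|g\|_{L^2(K)}$ miscounts a power --- testing $-\Delta\zeta=g$ against $\zeta\in H^1_0(K)$ and using Poincar\'e--Friedrichs gives $|\zeta|_{H^1(K)}\leq h_K\|g\|_{L^2(K)}$, so the correct bound is $\|\zeta-\pi\zeta\|_{L^2(K)}\leq c\,h_K^2\|g\|_{L^2(K)}$, which is exactly what you need to land on $h_K^{p+1}$ rather than $h_K^{p}$; (ii) it is worth noting that your argument, as written, never invokes $H^2$ elliptic regularity for the dual problem, which is fortunate since $K$ may be non-convex. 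Your closing remarks on rescaling to a reference configuration are consistent with how the paper handles the scale-invariance of $c(p,K)$.
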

\begin{proof}
It holds that
\begin{align*}
|v^K-\cI_K^K v|_{H^1(K)}^2&
=-\int_K (\Delta v-q^K )  (v^K-\cI_K^K v)\,dx
\leq \|\Delta v-q^K \|_{L^2(K)} \| v^K-\cI_K^K v\|_{L^2(K)}~.
\end{align*}
Since $v^K-\cI_K^K v\in H^1_0(K)$, the Poincar\'e-Friedrichs
Inequality, $\|w\|_{L^2(K)}\leq h_K|w|_{K}$ for $w\in H^1_0(K)$, 
ensures that
\begin{align}\label{Friedrichs}
|v^K-\cI_K^K v|_{H^1(K)}&\leq h_K\|\Delta v-q^K \|_{L^2(K)} =h_K \inf_{\phi\in \PP_{p-2}(K)}\|\Delta v-\phi\|_{L^2(K)} ~.
\end{align}
The estimate $|v^K-\cI_K^K v|_{H^1(K)}\leq c h_K^{p} |\Delta v|_{H^{p-1}(K)}$
follows from this by applying the Bramble-Hilbert Lemma. 
The $L^2(K)$ norm result follows from this by applying the
Poincar\'e-Friedrichs Inequality again.
\end{proof}
\begin{remark}
  If $K$ is convex, $h_K$ can be replaced by $h_K/\pi$
  in~\eqref{Friedrichs} (cf. \cite{Payne1960}).  Furthermore, for convex
  $K$, the dependence on $K$ of the constant $c(p,K)$ coming from the
  Bramble-Hilbert Lemma in
  Proposition~\ref{InterpolationBound1} can be removed, and for
  non-convex domains that are star-shaped with respect to a point,
  ball, or more general subdomain, various estimates of how $c(p,K)$
  depends on the shape of $K$ have been
  established~\cite{MR1726481,MR714693,MR2050198,MR2290609}.
\end{remark}

For our analysis of $v^{\partial K}-\cI_K^{\partial K}v$ the following
result will be useful.
\begin{proposition}\label{LInfinityProp}  If $v\in H^2(K)$ and $\Delta v = 0$ in $K$, 
there is a scale-invariant constant $c=c(K)$ for which
\begin{align}\label{LInfinityBound}
\|v\|_{L^\infty(K)}\leq c\inf\{h_K|w|_{H^2(K)}+|w|_{H^1(K)}+h_K^{-1}\|w\|_{L^2(K)}:\; w\in
  H^2(K)\mbox{ and }w=v\mbox{ on }\partial K\}~.
\end{align}
\end{proposition}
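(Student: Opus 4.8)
The plan is to exploit the harmonicity of $v$ to reduce the $L^\infty$ bound to one that sees only the boundary values of $v$, and then to control those boundary values by an \emph{arbitrary} $H^2$-extension through a Sobolev embedding written in scale-invariant form. First I would note that since $K$ is a bounded Lipschitz domain in $\RR^2$ and $v\in H^2(K)$, the embedding $H^2(K)\hookrightarrow C(\overline K)$ gives $v\in C(\overline K)$; because $\Delta v=0$ in $K$, the maximum principle then yields $\|v\|_{L^\infty(K)}=\|v\|_{L^\infty(\partial K)}$. Next, let $w\in H^2(K)$ be any function with $w=v$ on $\partial K$ --- at least one such $w$ exists, namely $w=v$. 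Since $w\in C(\overline K)$ as well, the trace identity holds pointwise on $\partial K$, so $\|v\|_{L^\infty(\partial K)}=\|w\|_{L^\infty(\partial K)}\le\|w\|_{L^\infty(K)}$. It then remains to bound $\|w\|_{L^\infty(K)}$ by the three-term quantity on the right of \eqref{LInfinityBound}, with a constant depending only on the shape of $K$.

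For the last step I would pass to the reference configuration $\hat K=h_K^{-1}K$, which has diameter $1$, writing $\hat g(\hat x)=g(h_K\hat x)$. The Sobolev embedding $H^2(\hat K)\hookrightarrow C(\overline{\hat K})$ on the fixed Lipschitz domain $\hat K$ furnishes a constant $c=c(\hat K)$, depending on $K$ only through its shape, with $\|\hat g\|_{L^\infty(\hat K)}\le c\bigl(|\hat g|_{H^2(\hat K)}+|\hat g|_{H^1(\hat K)}+\|\hat g\|_{L^2(\hat K)}\bigr)$. The change-of-variables relations in $\RR^2$, namely $\|\hat g\|_{L^\infty(\hat K)}=\|g\|_{L^\infty(K)}$, $|\hat g|_{H^2(\hat K)}=h_K|g|_{H^2(K)}$, $|\hat g|_{H^1(\hat K)}=|g|_{H^1(K)}$, and $\|\hat g\|_{L^2(\hat K)}=h_K^{-1}\|g\|_{L^2(K)}$, convert this into the scale-invariant estimate $\|g\|_{L^\infty(K)}\le c\bigl(h_K|g|_{H^2(K)}+|g|_{H^1(K)}+h_K^{-1}\|g\|_{L^2(K)}\bigr)$. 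Applying this with $g=w$ and combining with the previous paragraph gives, for every admissible $w$,
\[
\|v\|_{L^\infty(K)}\le c\bigl(h_K|w|_{H^2(K)}+|w|_{H^1(K)}+h_K^{-1}\|w\|_{L^2(K)}\bigr);
\]
taking the infimum over all $w\in H^2(K)$ with $w=v$ on $\partial K$ yields \eqref{LInfinityBound}.

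I do not expect a genuine obstacle here. The real content of the statement is that the maximum principle lets the bound depend only on the boundary trace of $v$, so it may be routed through \emph{any} $H^2$-extension of that trace, while $H^2(K)\hookrightarrow C(\overline K)$ is comfortably above the Sobolev threshold in two dimensions. The only point requiring care is the bookkeeping of the powers of $h_K$ under the dilation, so that the embedding constant is genuinely scale-invariant; one should also keep in mind that this constant still depends on the geometry of $K$ (its angles, aspect ratio, Lipschitz character), which the statement permits.
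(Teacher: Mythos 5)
Your argument is correct and follows essentially the same route as the paper's proof: a Sobolev embedding $H^2(K)\hookrightarrow C(\overline K)$ put into scale-invariant form by a dilation argument, combined with the maximum principle for harmonic functions to pass from $v$ to an arbitrary $H^2$ function with the same boundary trace. The only difference is that you spell out the scaling bookkeeping and the pointwise trace identification explicitly, which the paper leaves as a ``standard scaling argument.''
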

\begin{proof}
For $w\in H^2(K)$, we have $\|w\|_{L^{\infty}(K)}\leq c\|w\|_{H^2(K)}$
by a Sobolev embedding result.  A standard scaling argument then yields
\begin{align*}
\|w\|_{L^\infty(K)}\leq c\left(h_K|w|_{H^2(K)}+|w|_{H^1(K)}+h_K^{-1}\|w\|_{L^2(K)} \right)~,
\end{align*}
where $c=c(K)$ is scale-invariant.  Now~\eqref{LInfinityBound} follows
from the fact that harmonic functions on $K$ attain their extrema on
$\partial K$, so, if $v,w\in H^2(K)$ have the same Dirichlet trace on
$\partial K$, and $v$ is harmonic on $K$, then
$\|v\|_{L^\infty(K)}\leq \|w\|_{L^\infty(K)}$.
\end{proof}

\begin{remark}\label{rem:LInfinityProp}
  Since we are working in $\RR^2$, $H^{1+s}(K)$ is continuously
  imbedded in $C(\overline{K})$ for any $s\in (0,1)$.  Therefore,
  Proposition~\ref{LInfinityProp} is readily generalized to such
  spaces, with the obvious bound
\begin{align}\label{LInfinityBoundB}
  \|v\|_{L^\infty(K)}\leq c\inf\{h_K^s|w|_{H^{1+s}(K)}+|w|_{H^1(K)}+h_K^{-1}\|w\|_{L^2(K)}:\; w\in
  H^{1+s}(K)\mbox{ and }w=v\mbox{ on }\partial K\}~.
\end{align} 
Typical assumptions on the domain $\Omega$ and the data for the
problem guarantee that $u\in H^{1+s}(\Omega)$ for some $s>0$ (cf.
~\cite{MR3396210,Grisvard1992,Wigley1964}).  
\end{remark}

We now consider the term $|v^{\partial K}-\cI_K^{\partial K}v|_{1,K}$.
Let $e$ be an edge of $\partial K$, with endpoints $z_0,z_1$.  We
begin with a further decomposition of $q_e$, namely
$q_e=q_{e,1}+q_{e,0}$, where $q_{e,1}=v(z_0)\ell_0+v(z_1)\ell_1\in\PP_1(e)$ and
$q_{e,0}=q_e-q_{e,1}\in\PP_{p,0}(e)$.  This induces a natural
decomposition of $q^{\partial K}$, $q^{\partial K}=q^{\partial
  K}_1+q^{\partial K}_0$, where $q^{\partial K}_1\in \PP_1(\partial K)$ satisfies 
$q^{\partial  K}_1(z)=v(z)$ at each vertex $z$ of $K$, and
$q^{\partial  K}_0$ vanishes at the vertices.
\begin{proposition}\label{InterpolationBound2}
Suppose that $v\in H^{p+1}(K)$ for some $p\geq 1$. 
There is a scale-invariant constant $c=c(p,K)$ for which
\begin{align*}
|v^{\partial K}-\cI_K^{\partial K} v|_{H^1(K)}&\leq c h_K^{p} |v|_{H^{p+1}(K)}
\quad,\quad 
\|v^{\partial K}-\cI_K^{\partial K} v\|_{L^2(K)}\leq c h_K^{p+1} |v|_{H^{p+1}(K)}~.
\end{align*}
\end{proposition}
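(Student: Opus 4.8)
The plan is to estimate $|v^{\partial K} - \cI_K^{\partial K} v|_{H^1(K)}$ by comparing both functions to a single polynomial. Let $\pi v \in \PP_p(K)$ be a polynomial approximation to $v$ supplied by the Bramble--Hilbert lemma, so that $|v - \pi v|_{H^j(K)} \le c\, h_K^{p+1-j} |v|_{H^{p+1}(K)}$ for $j = 0, 1, 2$. Because $\PP_p(K) \subset V_p(K)$ and the decomposition $V_p(K) = V_p^K(K) \oplus V_p^{\partial K}(K)$ is orthogonal in the relevant sense, $(\pi v)^{\partial K}$, the harmonic extension of $(\pi v)\vert_{\partial K}$, lies in $V_p^{\partial K}(K)$, and in fact $\cI_K^{\partial K}(\pi v) = (\pi v)^{\partial K}$ since the defining conditions \eqref{LocalInterpolant3} reproduce polynomial traces on each edge. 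Writing
\begin{align*}
v^{\partial K} - \cI_K^{\partial K} v = \bigl(v^{\partial K} - (\pi v)^{\partial K}\bigr) + \bigl(\cI_K^{\partial K}(\pi v) - \cI_K^{\partial K} v\bigr) = \bigl(v^{\partial K} - (\pi v)^{\partial K}\bigr) - \cI_K^{\partial K}(v - \pi v)~,
\end{align*}
reduces the problem to bounding the harmonic extension of $(v - \pi v)\vert_{\partial K}$ and the interpolant $\cI_K^{\partial K}(v - \pi v)$, each in terms of $|v - \pi v|_{H^{p+1}(K)}$-controlled quantities.

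For the first piece, since harmonic extension does not increase the $H^1$ seminorm (the basic orthogonality identity preceding \eqref{InterpolationPythagorean}), $|v^{\partial K} - (\pi v)^{\partial K}|_{H^1(K)} \le |v - \pi v|_{H^1(K)} \le c\, h_K^p |v|_{H^{p+1}(K)}$. For the second piece I would first treat the vertex part: by the decomposition $q_e = q_{e,1} + q_{e,0}$, the contribution $q_1^{\partial K}$ of $v - \pi v$ is the piecewise-linear interpolant of the nodal values $(v - \pi v)(z)$, and these are bounded using $\|v - \pi v\|_{L^\infty(K)} \le c\bigl(h_K|v-\pi v|_{H^2(K)} + |v - \pi v|_{H^1(K)} + h_K^{-1}\|v - \pi v\|_{L^2(K)}\bigr) \le c\, h_K^p |v|_{H^{p+1}(K)}$ via Proposition~\ref{LInfinityProp} (or Remark~\ref{rem:LInfinityProp} if only $H^{1+s}$ regularity is available near corners, although here $v \in H^{p+1}$ with $p \ge 1$ suffices). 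The harmonic extension of this piecewise-linear trace then has $H^1$ seminorm controlled by a scaling/trace argument, contributing the same order. For the edge-interior part $q_{e,0}$ of $v - \pi v$, the $H^{1/2}(e)$-projection condition in \eqref{LocalInterpolant3} makes $q_{e,0}$ the $H^{1/2}_{00}(e)$-orthogonal projection of $(v - \pi v)\vert_e$ (after subtracting its linear nodal interpolant) onto $\PP_{p,0}(e)$, hence $\|q_{e,0}\|_{H^{1/2}(e)} \le \|(v - \pi v)\vert_e - (\text{linear part})\|_{H^{1/2}(e)} \le c\|v - \pi v\|_{H^1(K)}$ by the trace theorem. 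Summing over edges and invoking the standard bound $|w^{\partial K}|_{H^1(K)} \le c\|w\vert_{\partial K}\|_{H^{1/2}(\partial K)}$ for harmonic extensions yields $|\cI_K^{\partial K}(v - \pi v)|_{H^1(K)} \le c\, h_K^p |v|_{H^{p+1}(K)}$ after tracking the scaling of the fractional norms in $h_K$. The $L^2(K)$ estimate then follows by the same argument combined with the Poincar\'e--Friedrichs-type control, or alternatively by an Aubin--Nitsche duality argument on $K$.

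The main obstacle is making the scaling in $h_K$ of the $H^{1/2}(e)$ and $H^{1/2}(\partial K)$ norms completely explicit and uniform: the seminorm part of \eqref{H1_2InnerProduct} scales like $h_K^0$ while the $L^2$ part scales like $h_K$, so one must be careful that the trace inequality, the stability of the $H^{1/2}$-projection onto $\PP_{p,0}(e)$, and the harmonic-extension bound are each applied in a scale-invariant form — this is where ``scale-invariant constant $c = c(p,K)$'' needs genuine justification rather than a reference to a fixed reference element, since the edges of $K$ are curved and there is no single reference configuration. A clean way to handle this is to rescale $K$ to unit diameter, prove the estimate there with a constant depending only on $p$ and the shape of $K$, and then scale back, checking that each term in \eqref{H1_2InnerProduct} transforms with the expected power of $h_K$.
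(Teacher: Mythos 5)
Your proposal is correct and follows essentially the same route as the paper's proof: you split the boundary interpolant into a vertex (piecewise-linear trace) part controlled via the $L^\infty$ bound of Proposition~\ref{LInfinityProp} and an edge-interior part controlled via the $H^{1/2}(e)$-projection property in \eqref{LocalInterpolant3} together with a harmonic-extension/trace bound, which amounts to the stability estimate \eqref{StabilityBound}; you then combine this with the reproduction property $\cI_K^{\partial K}\phi=\phi^{\partial K}$ for $\phi\in V_p(K)$ and the Bramble--Hilbert lemma. The scale-invariance issue for the $H^{1/2}$ norms that you flag at the end is exactly what the paper resolves in Remark~\ref{H1_2NormEquivalence}, by observing that the Gagliardo seminorm is scale-invariant and the $L^2$ parts sum exactly over edges.
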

\begin{proof}
We decompose $\cI^{\partial K}_Kv$ as $\cI^{\partial K}_Kv=w_1+w_0$,
where
\begin{align*}
\begin{cases}\Delta w_1=0&\mbox{ in }K\\w_1=q_1^{\partial K}&\mbox{ on }\partial K\end{cases}
\quad,\quad
\begin{cases}\Delta w_0=0&\mbox{ in }K\\w_0=q_0^{\partial K}&\mbox{ on }\partial K\end{cases}
~.
\end{align*}
It follows that $|\cI^{\partial K}_Kv|_{H^1(K)}\leq |w_1|_{H^1(K)}+|w_0|_{H^1(K)}$.

We denote the set of vertices of $K$ by $\cV(K)$, and the set of edges
of $K$ by $\cE(K)$.  For $z\in \cV(K)$, we define
$\ell_z\in\PP_1(\partial K)$ as follows: if $e$ is not adjacent to
$z$, then $\ell_z$ vanishes on $e$, and if $e$ is adjacent to $z$,
then $\ell_z=\ell_j$ on $e$, where $z=z_j$ for one of the endpoints
$z_0,z_1$ of $e$. Let $\phi_z$ be the harmonic function on $K$ whose
Dirichlet trace on $\partial K$ is $\ell_z$.  It follows that $w_1=\sum_{z\in\cV(K)}v(z)\phi_z$, so
\begin{align*}
|w_1|_{H^1(K)}\leq \|v^{\partial K}\|_{L^\infty(K)}\sum_{z\in\cV(K)}|\phi_z|_{H^1(K)}
\leq c \|v^{\partial K}\|_{L^\infty(K)}\leq c(h_K|v|_{H^2(K)}+|v|_{H^1(K)}+h_K^{-1}\|v\|_{L^2(K)})~,
\end{align*}
where we have used~\eqref{LInfinityBound} in the final inequality.
A similar argument shows that $\|w_1\|_{L^2(K)}\leq c(h_K^2|v|_{H^2(K)}+ h_K|v|_{H^1(K)}+\|v\|_{L^2(K)})$.

From~\eqref{LocalInterpolant3}, we see that
$(q_{e,0},q_{e,0})_{H^{1/2}(e)}=(v-q_{e,1},q_{e,0})_{H^{1/2}(e)}$, so
$\|q_{e,0}\|_{H^{1/2}(e)}\leq \|v-q_{e,1}\|_{H^{1/2}(e)}$, for each
edge $e$.  Now, 
\begin{align*}
|w_0|^2_{H^1(K)}=\int_{\partial K}(\partial
w_0/\partial n)q_0^{\partial K}\,ds\leq c\|\partial
w_0/\partial n\|_{H^{-1/2}(\partial K)}\|q_0^{\partial  K}\|_{H^{1/2}(\partial K)}
\leq c |w_0|_{H^1(K)}\|q_0^{\partial K}\|_{H^{1/2}(\partial K)}~.
\end{align*}
Here we have used applied the trace inequality $\|\partial
w_0/\partial n\|_{H^{-1/2}(\partial K)}\leq c(|w_0|_{H^1(K)}+\|\Delta w_0\|_{L^2(K)})=c|w_0|_{H^1(K)}$ (cf.~\cite[Theorem A.33]{Schwab1998Book}), where
$c=c(K)$ is scale-invariant.
From this it follows that
\begin{align*}
|w_0|^2_{H^1(K)}\leq c\|q_0^{\partial K}\|^2_{H^{1/2}(\partial K)}\leq
  c \sum_{e\in \cE(K)} \|q_0^{\partial K}\|^2_{H^{1/2}(e)}\leq c
  \sum_{e\in \cE(K)} \|v-q_{e,1}\|^2_{H^{1/2}(e)} 
\leq c \|v-q_{1}^{\partial K}\|^2_{H^{1/2}(\partial K)}~. 
\end{align*}
The second inequality holds because 
$q_0$ vanishes at the vertices, see Remark~\ref{H1_2NormEquivalence}.
At this stage, $c=c(p,K)$.

Another standard trace inequality ensures that
$\|v-q_{1}^{\partial K}\|_{H^{1/2}(\partial K)}\leq c\left(|v-w_1|_{H^1(K)}+h_K^{-1}\|v-w_1\|_{L^2(K)}\right)$.
Combining this with our estimates above, we obtain
\begin{align*}
|w_0|_{H^1(K)}&\leq c\left(|v-w_1|_{H^1(K)}+h_K^{-1}\|v-w_1\|_{L^2(K)}\right)\\
&\leq c\left(h_K|v|_{H^2(K)}+|v|_{H^1(K)}+h_K^{-1}\|v\|_{L^2(K)}\right)~,
\end{align*}
and it follows, by applying the estimates for $|w_0|_{H^1(K)}$ and $|w_1|_{H^1(K)}$, that 
\begin{align}\label{StabilityBound}
|\cI^{\partial K}_Kv|_{H^1(K)}\leq  c\left(h_K|v|_{H^2(K)}+|v|_{H^1(K)}+h_K^{-1}\|v\|_{L^2(K)}\right)~.
\end{align}
A standard inverse inequality, and the fact that our interpolation
scheme preserves constants, yields the obvious analogue in $L^2(K)$,
\begin{align}\label{StabilityBoundL2}
\|\cI^{\partial K}_Kv\|_{L^2(K)}\leq  c\left(h_K^2|v|_{H^2(K)}+ h_K|v|_{H^1(K)}+\|v\|_{L^2(K)}\right)~.
\end{align}

Now, let $\phi\in V_p(K)$ and decompose it as
$\phi=\phi^{K}+\phi^{\partial K}$, with $\phi^{K}\in V_p^{K}(K)$ and
$\phi^{\partial K}\in V_p^{\partial K}(K)$.  We have
$|v-\phi|_{H^1(K)}^2=|v^{K}-\phi^{K}|_{H^1(K)}^2+|v^{\partial
  K}-\phi^{\partial K}|_{H^1(K)}^2$, so $|v^{\partial
  K}-\phi^{\partial K}|_{H^1(K)}\leq |v-\phi|_{H^1(K)}$.  Noting that $\cI^{\partial K}_K
\phi=\phi^{\partial K}$, and applying~\eqref{StabilityBound} to $v-\phi$,
we see that
\begin{align*}
|v^{\partial K}-\cI^{\partial K}_K v|_{H^1(K)}&\leq
|v^{\partial K}-\phi^{\partial K}|_{H^1(K)}+|\cI^{\partial K}_K
                                                (v-\phi)|_{H^1(K)}\\
&
\leq c\left(h_K|v-\phi|_{H^2(K)}+|v-\phi|_{H^1(K)}+h_K^{-1}\|v-\phi\|_{L^2(K)}\right)~.
\end{align*}
Since $\PP_p(K)\subset V_p(K)$,
the Bramble-Hilbert Lemma now implies that $|v^{\partial
  K}-\cI^{\partial K}_K v|_{H^1(K)}\leq c h_K^p|v|_{H^{p+1}(K)}$, as
claimed.  
The result for the $L^2(K)$ norm follows the same pattern, but we
briefly lay out the argument anyway.  It holds that
\begin{align*}
\|v^{\partial K}-\cI^{\partial K}_K v\|_{L^2(K)}&\leq
\|v^{\partial K}-\phi^{\partial K}\|_{L^2(K)}+\|\cI^{\partial K}_K
                                                (v-\phi)\|_{L^2(K)}\\
&
\leq \|v^{\partial K}-\phi^{\partial K}\|_{L^2(K)}+ c\left(h_K^2|v-\phi|_{H^2(K)}+h_K|v-\phi|_{H^1(K)}+\|v-\phi\|_{L^2(K)}\right)~.
\end{align*}
It remains to estimate $\|v^{\partial K}-\phi^{\partial
  K}\|_{L^2(K)}$, for which we have
\begin{align*}
\|v^{\partial K}-\phi^{\partial K}\|_{L^2(K)}&\leq
                                               \|v-\phi\|_{L^2(K)}+\|v^{K}-\phi^{K}\|_{L^2(K)}\\
&\leq \|v-\phi\|_{L^2(K)}+h_K|v^{K}-\phi^{K}|_{H^1(K)}\leq \|v-\phi\|_{L^2(K)}+h_K|v-\phi|_{H^1(K)}~.
\end{align*}
Combining this with our previous estimate yields,
\begin{align*}
\|v^{\partial K}-\cI^{\partial K}_K v\|_{L^2(K)}\leq c\left(h_K^2|v-\phi|_{H^2(K)}+h_K|v-\phi|_{H^1(K)}+\|v-\phi\|_{L^2(K)}\right)~,
\end{align*}
and Bramble-Hilbert Lemma completes the argument.
\end{proof}

\begin{remark}\label{H1_2NormEquivalence}
The claim that $\|q_0^{\partial K}\|_{H^{1/2}(\partial K)}^2\leq c
\sum_{e\in\cE(K)}\|q_0^{\partial K}\|_{H^{1/2}(e)}^2$ in the proof of
Proposition~\ref{InterpolationBound2} requires further comment.
Superficially, this holds because both quantities are (squares of) norms on the
finite dimensional vector space $\PP_{p,0}(\partial K)=\{w\in
P_p(\partial K):\, w(z)=0\mbox{ for all }z\in \cV(K)\}$.  Although
such an argument allows for the dependence of $c$ on $\dim
\PP_{p,0}(\partial K)$ (hence on $p$), we want to ensure that $c$ is
scale-invariant.  For that, we look a little closer at the norms.
We have
$\|w\|_{H^{1/2}(\Gamma)}^2=\|w\|_{L^{2}(\Gamma)}^2+|w|_{H^{1/2}(\Gamma)}^2$,
where 
\begin{align*}
|w|_{H^{1/2}(\Gamma)}^2=\int_{\Gamma}\int_{\Gamma}\frac{(w(x)-w(y))^2}{|x-y|^2}\,ds(x)\,ds(y)~.
\end{align*}
As suggested by the notation, $|\cdot|_{H^{1/2}(\Gamma)}$ is generally
a semi-norm, with constant functions as its kernel.  However, for
$w\in \PP_{p,0}(\partial K)$, both $|w|_{H^{1/2}(\partial K)}$ and
$\left(\sum_{e\in\cE(K)}|w|_{H^{1/2}(e)}^2\right)^{1/2}$ are norms, so
there is a constant $c$ such that $|w|_{H^{1/2}(\partial K)}\leq c
\left(\sum_{e\in\cE(K)}|w|_{H^{1/2}(e)}^2\right)^{1/2}$.
Since both norms in this inequality are scale invariant, so is $c$.
Since $\|w\|_{L^2(\partial K)}^2=\sum_{e\in\cE(K)}\|w\|_{L^{2}(e)}^2$,
we have the result that was claimed.

We briefly mention two earlier contributions that have considered some
of the same issues that we do here concerning working with the
$H^{1/2}$ norm on all versus individual parts of the boundary of a
mesh cell or a polyhedral subdomain , but in the context of standard
finite element meshes.  The first is~\cite{MR842125}, and it concerns
domain decomposition-type preconditioners for linear solvers. Though
we were unable to use the results of Section 3 in that paper related
to localization of the $H^{1/2}$ norm our context, they provide the
first discussion and treatment of this issue of which we are aware in
the finite element literature. The second contribution
is~\cite{Demkowicz2005}, in which the authors set forth
projection-based interpolation schemes that are conforming in $H^1$,
$H(\mathrm{curl})$ and $H(\mathrm{div})$ spaces.  Our interpolation
scheme is also projection based, but because their results were for
standard element shapes, they could not be readily applied in our
context.
\end{remark}

\begin{remark}\label{InterpBestApprox}
The proof of Proposition~\ref{InterpolationBound2} revealed that
\begin{align*}
|v^{\partial K}-\cI^{\partial K}_K v|_{H^1(K)}&
\leq c\inf_{\phi\in
  V_p(K)}\left(h_K|v-\phi|_{H^2(K)}+|v-\phi|_{H^1(K)}+h_K^{-1}\|v-\phi\|_{L^2(K)}\right)~,\\
\|v^{\partial K}-\cI^{\partial K}_K v\|_{L^2(K)}&
\leq c\inf_{\phi\in
  V_p(K)}\left(h_K^2|v-\phi|_{H^2(K)}+h_K|v-\phi|_{H^1(K)}+\|v-\phi\|_{L^2(K)}\right)~.
\end{align*}
In fact, by the same reasoning as discussed in
Remark~\ref{rem:LInfinityProp}, we have the expected versions for
fractional order spaces as well, for $s\in(0,1]$,
\begin{align}\label{InterpBestApprox_H1}
|v^{\partial K}-\cI^{\partial K}_K v|_{H^1(K)}
&
\leq
  c\inf_{\phi\in
  V_p(K)}\left(h_K^s|v-\phi|_{H^{1+s}(K)}+|v-\phi|_{H^1(K)}+h_K^{-1}\|v-\phi\|_{L^2(K)}\right)~,\\
\label{InterpBestApprox_L2}
\|v^{\partial K}-\cI^{\partial K}_K v\|_{L^2(K)}
&
\leq
  c\inf_{\phi\in V_p(K)}\left(h_K^{1+s}|v-\phi|_{H^{1+s}(K)}+h_K|v-\phi|_{H^1(K)}+\|v-\phi\|_{L^2(K)}\right)~.
\end{align}
\end{remark}

Combining Propositions~\ref{InterpolationBound1}
and~\ref{InterpolationBound2}, we obtain our key interpolation error
result,
\begin{theorem}\label{InterpolationError}
Suppose that $v\in H^{p+1}(K)$ for some $p\geq 1$. 
There is a scale-invariant constant $c=c(p,K)$ for which
\begin{align*}
|v-\cI_K v|_{H^1(K)}&\leq c h_K^{p} |v|_{H^{p+1}(K)}
\quad,\quad 
\|v-\cI_K v\|_{L^2(K)}\leq c h_K^{p+1} |v|_{H^{p+1}(K)}~.
\end{align*}
\end{theorem}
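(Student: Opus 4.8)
The plan is to assemble Propositions~\ref{InterpolationBound1} and~\ref{InterpolationBound2} through the decomposition $v - \cI_K v = (v^K - \cI_K^K v) + (v^{\partial K} - \cI_K^{\partial K} v)$ induced by~\eqref{vDecomposition} and~\eqref{LocalInterpolant1}. First I would note that the hypothesis $v \in H^{p+1}(K)$ implies $\Delta v \in H^{p-1}(K)$ with $|\Delta v|_{H^{p-1}(K)} \leq c\,|v|_{H^{p+1}(K)}$, where $c$ is a purely combinatorial (dimension-dependent) constant and hence scale-invariant; this is what lets me invoke Proposition~\ref{InterpolationBound1} and re-express its bound in terms of $|v|_{H^{p+1}(K)}$. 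Proposition~\ref{InterpolationBound2} applies to $v$ verbatim.

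For the $H^1$ seminorm, I would use the Pythagorean identity~\eqref{InterpolationPythagorean}, valid because $v^{\partial K} - \cI_K^{\partial K} v$ is harmonic while $v^K - \cI_K^K v$ vanishes on $\partial K$. Bounding the first summand by Proposition~\ref{InterpolationBound1} (after the conversion above) and the second by Proposition~\ref{InterpolationBound2}, each contribution is at most $c\,h_K^{p}\,|v|_{H^{p+1}(K)}$, so taking the square root of their sum yields $|v - \cI_K v|_{H^1(K)} \leq c\,h_K^{p}\,|v|_{H^{p+1}(K)}$ with a new scale-invariant constant $c = c(p,K)$. For the $L^2$ norm there is no orthogonality to exploit, so I would instead use the triangle inequality $\|v - \cI_K v\|_{L^2(K)} \leq \|v^K - \cI_K^K v\|_{L^2(K)} + \|v^{\partial K} - \cI_K^{\partial K} v\|_{L^2(K)}$ and apply the $L^2$ estimates of the two propositions, each of which is $c\,h_K^{p+1}\,|v|_{H^{p+1}(K)}$; summing gives the claim.

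I do not expect a genuine obstacle here: the substance of the argument lives entirely in Propositions~\ref{InterpolationBound1} and~\ref{InterpolationBound2}, and this theorem is essentially their bookkeeping corollary. The only point demanding a moment of care is verifying that the constant relating $|\Delta v|_{H^{p-1}(K)}$ to $|v|_{H^{p+1}(K)}$ --- and, more generally, every constant introduced along the way --- is independent of the scale of $K$, so that the final $c(p,K)$ genuinely retains the scale-invariance asserted in the statement.
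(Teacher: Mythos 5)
Your proposal is correct and matches the paper's own argument, which simply combines Propositions~\ref{InterpolationBound1} and~\ref{InterpolationBound2} via the orthogonal decomposition~\eqref{InterpolationPythagorean} for the $H^1$ seminorm and the triangle inequality for the $L^2$ norm. Your added remark that $|\Delta v|_{H^{p-1}(K)}\leq c\,|v|_{H^{p+1}(K)}$ with a scale-invariant constant is exactly the small piece of bookkeeping the paper leaves implicit.
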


Once a proper notion of ``shape regularity'' is determined for
families of meshes $\{\cT_h\}$ consisting of curvilinear polygons, a
result such as
$|v-\cI v|_{H^1(\Omega)}\leq c h^p |v|_{H^{p+1}(\Omega)}$, where
$c=c(p)$ and $h=\max\{h_K: \,K\in \cT_h\}$, follows immediately.  A
meaningful analysis of how the constant $c(p,K)$ in
Theorem~\ref{InterpolationError} depends on $p$ and the geometric
features of $K$ is beyond the scope of the present work.  One might
expect measures such as a ``chunkiness parameter'' (a
natural generalization of aspect ratio, cf.~\cite[Defintion
4.2.16]{BrennerScott2002Book}), the number of edges, the curvature of
edges, and the length of edges with respect to the element diameter,
to play an important role in determining the dependence of $c=c(K)$ on
element geometry.  Indeed, the number of edges of $K$ clearly arises in
the proof of Proposition~\ref{InterpolationBound2}, when we bound
$|w_1|_{H^1(K)}$ using a sum of seminorms of functions associated the
vertices; see also Remark~\ref{H1_2NormEquivalence}, in which a sum over edges is used.
In contrast, the special analysis given for the L-shaped elements of
Example~\ref{LShape}, which have fixed size but increasing number of
edges as the mesh is ``refined'', provides an example in which neither
the number of edges nor their relation to the diameter of the element
have any bearing on the associated interpolation constant.
In each of the other examples in Section~\ref{Experiments}, the maximal
curvature of edges grows without bound as the diameters of the
elements shrink, and this has no apparent negative effect on the
convergence of the discretization error, which suggests that edge
curvature may not ultimately play such an important role in
interpolation error analysis either.  Additionally, some of the
families of meshes in Section~\ref{Experiments} consist entirely of
elements that are not star-shaped with respect to any ball, in which
case discussion of a chunkiness parameter is either meaningless, or
would have to take on a different form if it were to be applicable at all.
In summary, a thorough analysis of how local interpolation error
depends (or does not depend) on geometric features of elements is needed.
Further extensions of our interpolation error analysis of interest include:
\begin{enumerate} 
\item Estimates that
directly involve both the element diameter $h_K$ and a local
``polynomial degree'' $p_K$, in the manner of standard $hp$-finite
element analysis.  
\item Estimates that exploit the fact that $V_p(K)$ is a richer
  space than $\PP_p(K)$, often containing singular functions that may
  allow similar convergence results for interpolation 
under weaker regularity assumptions on $v$, as suggested by
Remarks~\ref{rem:LInfinityProp} and~\ref{InterpBestApprox}.
\end{enumerate}
We plan to pursue these extensions in subsequent work.

\section{Computing with Curved Trefftz Finite Elements}\label{Computation}
We recall that the functions that we wish to compute in $V_p(K)$ satisfy one
of two types of equations:
\begin{align}
\begin{cases}
\Delta v=f&\mbox{in } K\\
v=0&\mbox{on }\partial K
\end{cases}\quad,\quad
\begin{cases}
\Delta v=0&\mbox{in } K\\
v=g&\mbox{on }\partial K
\end{cases}~,
\end{align}
where $f\in\PP_{p-2}(K)$ and $g\in\PP_{p}(\partial K)$.  The first
type of equation is readily converted to the second type as follows.
Given $f\in\PP_{p-2}(K)$, one can explicitly construct a
$\hat{f}\in\PP_p(K)$ such that $\Delta \hat{f}=f$.  With such a
function in hand, the first type of problem is reduced to finding
$\hat{v}$ satisfying $\Delta\hat{v}=0$ in $K$ and $\hat{v}=-\hat{f}$
on $\partial K$.  Then $v=\hat{v}+\hat{f}$ satisfies the first
problem.  In~\cite[Theorem 2]{KarachikAntropova2010}, the authors show
that, if $p$ is a homogeneous polynomial of degree $j$, then the
polynomial $q$ of degree $j+2$ given by
\begin{align*}
 q(x)&=\sum_{k=0}^{[j/2]}
\frac{(-1)^k (j-k)!}{(j+1)!(k+1)!}\left(\frac{|x|^2}{4}\right)^{k+1}\,\Delta^k p(x)~,
\end{align*}
where $[j/2]$ denotes the integer part of $j/2$, satisfies
$\Delta q=p$.  Having reduced either type of problem to the
computation of a harmonic function with piecewise smooth boundary
data, we may now employ any number of boundary integral equation
techniques to compute such functions.  One such technique is to use Boundary Element
Methods for first-kind integral equations, as is done in BEM-FEM, to directly compute
the outward normal derivative $\partial v/\partial n$;  interior
point values are computed from layer potentials, as needed, for
quadrature approximation of the element stiffness matrix.
The limitations in extending this
kind of approach to curved element boundaries in a natural way was one of the reasons
that we opted for Nystr\"om discretizations in~\cite{Anand2018}.  In
that work, we employed second-kind integral equations, which do not
directly yield $\partial v/\partial n$, but offered greater flexibility in
other areas that offset this downside.

Before describing the approach we use in the current work, we recall
the types of integrals we must compute in order to form the finite
element stiffness matrix.  They include integrals of the following
forms,
\begin{align*}
\int_K A\nabla v\cdot\nabla w\,dx \quad,\quad
  \int_{K}\mb{b}\cdot\nabla{v}w\,dx \quad,\quad \int_{K}c vw\,dx
  \quad,\quad \int_{K}f v\,dx\quad,\quad \int_e gv\,ds~,
\end{align*}
where $v,w\in V_p(K)$.
It is sometimes advantageous to use integration-by-parts on the first
of these integrals, yielding integrals of the forms
\begin{align*}
\int_K (\nabla\cdot A\nabla v) w\,dx\quad,\quad
\int_{\partial K} (A\nabla v\cdot\mb{n})w\,ds~.
\end{align*}
The benefits of such an approach become clear when $A$ is a constant
scalar on $K$, in which case the two integrals above simplify to 
\begin{align*}
A\int_K \Delta v\, w\,dx\quad,\quad
A\int_{\partial K} \frac{\partial v}{\partial n}\,w\,ds~.
\end{align*}
We note that $\Delta v\in\PP_{p-2}(K)$ and $w\in\PP_p(\partial K)$.
Further simplifications occur when $v\in V_p^{\partial K}(K)$ or $w\in V_p^{K}(K)
$---at least one of these two integrals vanishes.  We see then that,
in quadrature approximations of these kinds of integrals, we should
have access to function values and derivatives (up to second partials)
of functions in $V_p(K)$ in the interior of $K$, and
normal derivatives of such functions on $\partial K$---function values
and tangential derivatives of such functions on $\partial K$ are straightforward.

With these goals in mind, in~\cite{Ovall2018} we developed an approach
that delivers each of these quantities efficiently and with very high
accuracy, \textit{while performing all computations on the boundary}
$\partial K$.  The method is based on the fact that, on
simply-connected domains $K\subset\RR^2$, for each harmonic function $u$,
there is a family of \textit{harmonic conjugates} that differ from
each other only by additive constants.  We recall that $v$ is a
harmonic conjugate of $u$ on $K$ when $\Delta v=0$ in $K$ and $R\nabla
v=\nabla u$ in $K$, where the matrix $R$ rotates vectors clockwise by
$\pi/2$.  Such a pair of harmonic functions satisfy the Cauchy-Reimann
equations, and thus can be taken as the real and imaginary parts of a
complex analytic function in $K$.  More precisely, making the natural
identification between $z=x_1+\mathfrak{i} x_2\in\CC$ and
$x=(x_1,x_2)\in K$, the function defined by $w(z)=u(x)+\mathfrak{i}
v(x)$ is analytic in $K$.  Given both $u$ and $v$ on the boundary
$\partial K$, the value $w$ and its derivatives at points
inside $K$ can be obtained via Cauchy's integral formula,
\begin{align*}
w^{(k)}(z)=\frac{k!}{2\pi\mathfrak{i}}\oint_{\partial K}\frac{w(\xi)}{(\xi-z)^{k+1}}\,d\xi~,
\end{align*}
and the desired $k$th partial derivatives of $u$ (or $v$) can be extracted
from the real and imaginary parts of $w^{(k)}(z)$.  For example,
$w'(z)=u_{x_1}(x)-\mathfrak{i} u_{x_2}(x)$, where $u_{x_j}$ denotes
the partial derivative of $u$ in its $j$th argument.  For the second
partials, we have $w''(z)=u_{x_1x_1}(x)-\mathfrak{i} u_{x_1x_2}(x)$,
with $u_{x_2x_1}(x) =u_{x_1x_2}(x)$ and $u_{x_2x_2}(x) =-u_{x_1x_1}(x)$.
Furthermore, the
orthogonality of $\nabla u$ and $\nabla v$ in $K$ ensures the
following relationship between the normal and tangential derivatives
of $u$ and $v$ on $\partial K$.
\begin{align}\label{NormalTangential}
\frac{\partial u}{\partial n}=\frac{\partial v}{\partial t}\quad,\quad
\frac{\partial v}{\partial n}=-\frac{\partial u}{\partial t}~,
\end{align}
where $\partial v/\partial t$ denotes the tangential derivative along
$\partial K$ in the counter-clockwise direction.  With these
relationships, we see that it is possible to compute the normal
derivative of one harmonic function as the (much more convenient)
tangential derivative of a harmonic conjugate.   Therefore, this
general approach allows us to compute all of the quantities of
interest related to our harmonic function while performing all
computations on the boundary $\partial K$, as claimed.

Given the piecewise smooth boundary Dirichlet data $g$ of a harmonic function $u$,
any harmonic conjugate $v$ satisfies the complementary Neumann problem
\begin{align}\label{Neumann}
\Delta v=0\mbox{ in }K\quad,\quad \frac{\partial v}{\partial n}=-\frac{\partial g}{\partial t}~.
\end{align}
As stated earlier, solutions of~\eqref{Neumann} are only unique up to
additive constants, and we fix a particular member by specifying that
$\int_{\partial K}v\,ds=0$.  The trace of $v$ on $\partial K$ is
computed as the solution of the following second-kind integral
equation,
\begin{align}\label{IntegralEquation}
\frac{v(x)}{2}+\int_{\partial K}\left(\frac{\partial \Phi(x,y)}{\partial
  n(y)}+1\right)v(y)\,ds(y)=-\int_{\partial
  K}\Phi(x,y)\,\frac{\partial g}{\partial t}(y)\,ds(y)\mbox{ for }x\in\partial K~,
\end{align}
where $\Phi(x,y)=-(2\pi)^{-1}\ln|x-y|$ is the fundamental solution of
Laplace's equation.  The addition of $1$ to the integral kernel
$\partial \Phi(x,y)/\partial n(y)$ above, ensures
that~\eqref{IntegralEquation} is well-posed by enforcing that
$\int_{\partial K}v\,ds=0$.  Since $K$ will typically have corners,
the integral equation must be modified in their vicinity if we are to
understand the equation pointwise.  More specifically, if
$x_c\in\partial K$ is a corner point, and $x\in\partial K$ is not a
corner point, then
\begin{align}\label{IntegralEquation2}
|\partial K|v(x_c)+\frac{v(x)-v(x_c)}{2}+\int_{\partial K}\left(\frac{\partial \Phi(x,y)}{\partial
  n(y)}+1\right)(v(y)-v(x_c))\,ds(y)=-\int_{\partial
  K}\Phi(x,y)\,\frac{\partial g}{\partial t}(y)\,ds(y)~.
\end{align}
The case of multiple corners is handled similarly.
In the present work, as in~\cite{Ovall2018}, we
solve~\eqref{IntegralEquation} via a Nystr\"om discretization.   
Key to the practical success of this approach is the choice of 
quadrature schemes that are well-suited for the types of singularities
present in the integrands. The
interested reader may find the details in that paper~\cite{Ovall2018}.
Having computed $v$ on $\partial K$, we now have
access to the quantities of interest for $u$ as described above.

\begin{remark}[Multiply connected mesh cells]\label{NonSimplyConnected}
  If $K$ is not simply connected, the existence of harmonic conjugate
  pairs is not guaranteed, so the approach described above cannot be
  used.  In such cases, one can employ different integral equation
  techniques to efficiently and accurately compute the quantities
  necessary for assembling local stiffness matrices.  We mention the
  contribution~\cite{Greenbaum1993} (and references therein) in this
  regard.  The discussion in that work assumes smooth boundaries (at
  least $C^2$), so their approach must be modified in order to handle
  mesh cells having corners.  We intend to pursue this in subsequent
  work.
\end{remark}

\section{Numerical Experiments}\label{Experiments}
The experiments in this section illustrate the linear convergence rate
indicated by Theorem~\ref{InterpolationError} ($p=1$) for
$|u-\hat{u}|_{H^1(\Omega)}$ on simple model problems which nonetheless
illustrate the theoretical claims are achieved in practical
computations.  In the first set of experiments, three increasingly
complex families of meshes are used for the same problem on the unit
square.  In the second set of experiments, we explore the effects of
``nearly straight'' edges on convergence and conditioning for Type 1
and Type 2 elements.  For the final set of experiments, we consider a
problem for which the exact solution is known to have a singularity
due to a non-convex corner of the domain, and use it to illustrate the
approximation power of locally singular functions in our finite
element spaces.

\begin{example}[Three Curved Mesh Families]\label{ThreeMeshesExample}
Let $\Omega=(0,1)\times(0,1)$, and suppose that $u$ satisfies 
\begin{align*}
-\Delta u = 1\mbox{ in }\Omega\quad,\quad u=0\mbox{ on }\partial\Omega~.
\end{align*} 
Series representations of $u$ and $|u|_{H^1(\Omega)}^2$ are
\begin{align*}
u&=\sum_{m,n\in 2\NN-1}\frac{16}{mn(m^2+n^2)\pi^4}\,\sin(m\pi
x)\,\sin(n\pi y)~,\\
|u|_{H^1(\Omega)}^2&=\sum_{m,n\in 2\NN-1}\frac{64}{m^2n^2(m^2+n^2)\pi^6}\approx
3.51442537\times 10^{-2}~.
\end{align*}
We approximate $u$ by the finite element solution $\hat{u}\in V_1(\cT)$
on three different families of meshes, each indexed by a mesh
parameter $r$ that is inversely proportional to the characteristic
diameter of its mesh cells, see Figure~\ref{ThreeMeshes} for the case
$r=16$ of each, together with the corresponding computed finite
element solution $\hat{u}\in V_1(\cT)$.  Since the cells are of uniform
size, $r$ is the number of cells touching each edge of
$\partial\Omega$.

We refer to the first family of meshes as the Shuriken meshes, because
it consists of shuriken elements, as seen in Figure~\ref{ThreePolygons}),
which are naturally modified at the boundary to properly fit it.
There are three types of elements in this case, the corner elements,
edge elements and interior elements.  The second family is called the
Pegboard meshes, and it consists of two types of elements, the
half-washers and two-edge circles, as seen in
Figure~\ref{ThreePolygons}).  The third family is called the Jigsaw
meshes, and it has four different types of elements: corner pieces,
two different types of edge pieces, and interior pieces.
Of the nine different types of elements that appear in each of these
families, only the two-edge circles are convex.  In fact, none of the
other types of elements are even star-shaped.  Furthermore, each of
the jigsaw elements have at least two non-convex corners, which
implies that the local space $V_1(K)$ for such an element will include
functions that are singular, i.e. not in $H^2(K)$.

\begin{figure}
\begin{center}
\begin{tabular}{cc}
\includegraphics[width=2.2in]{./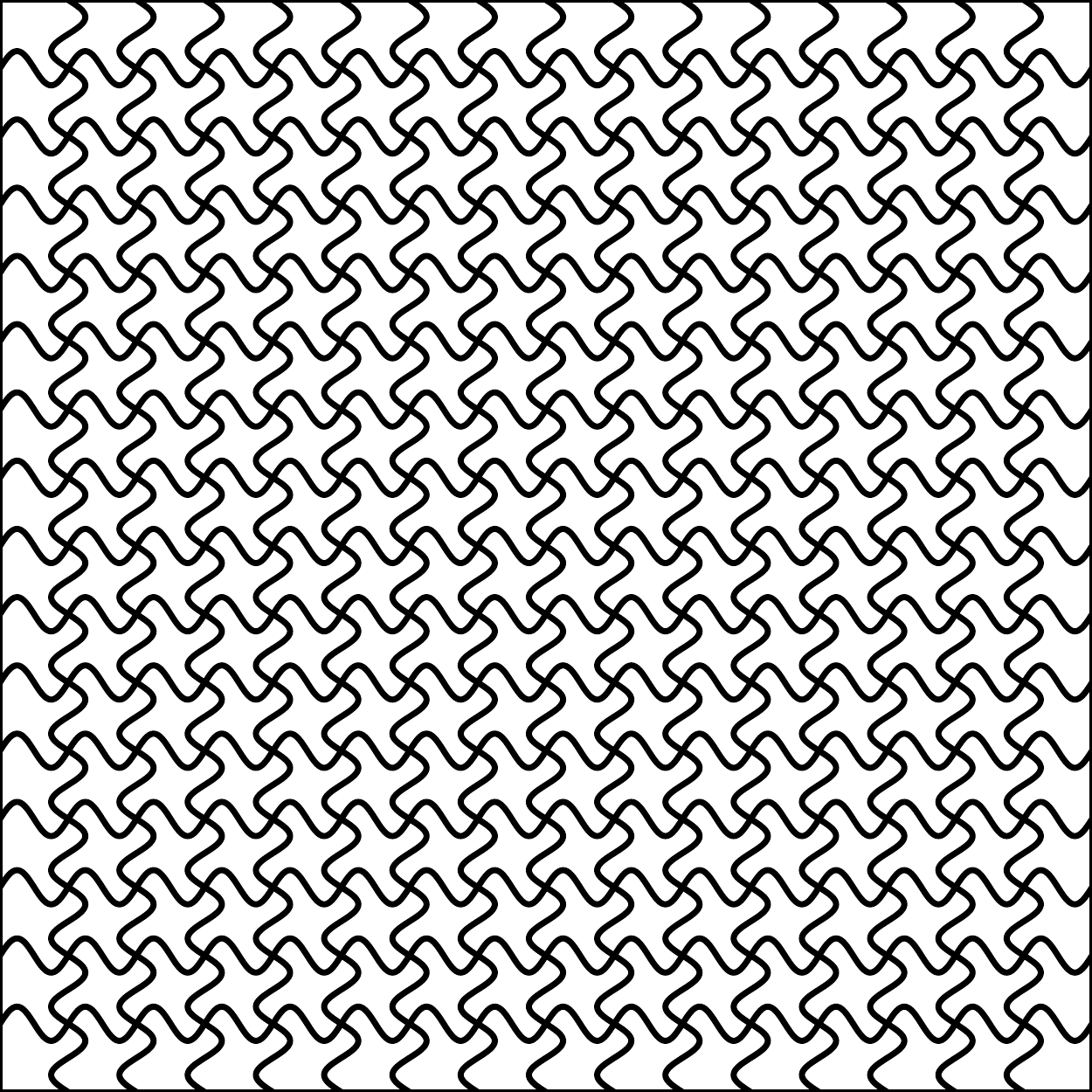}&
\includegraphics[width=3.2in]{./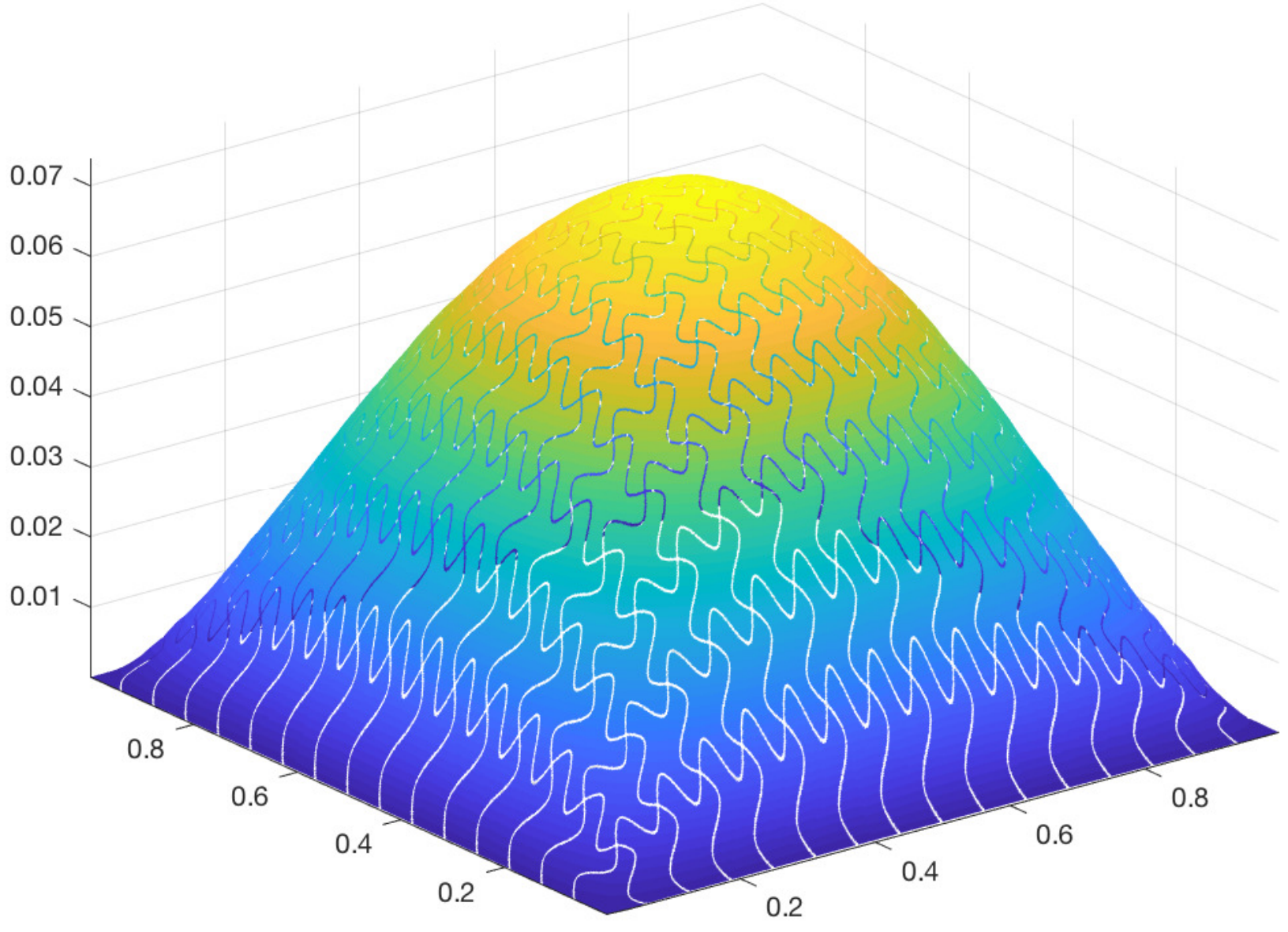}\\
\includegraphics[width=2.2in]{./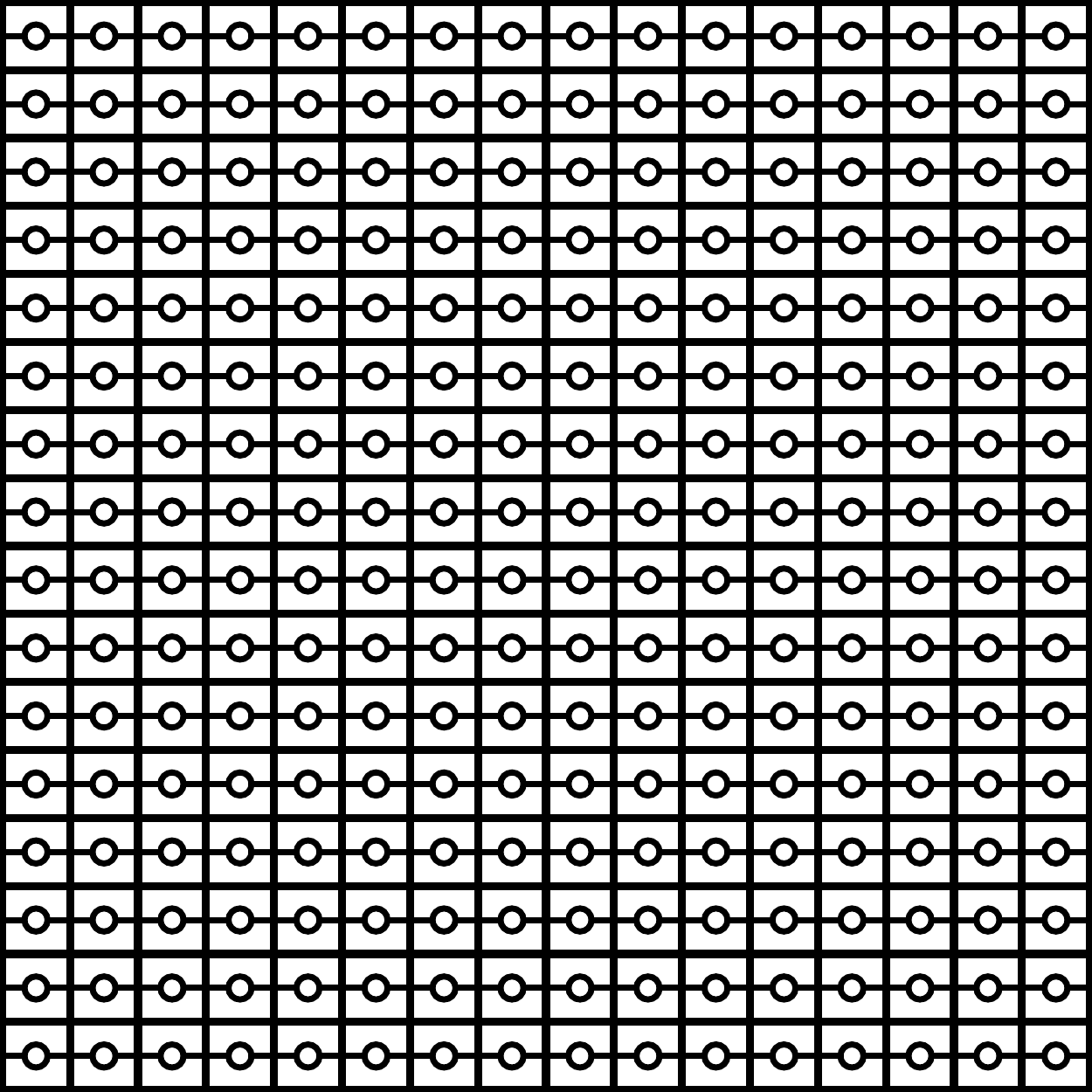}&
\includegraphics[width=3.2in]{./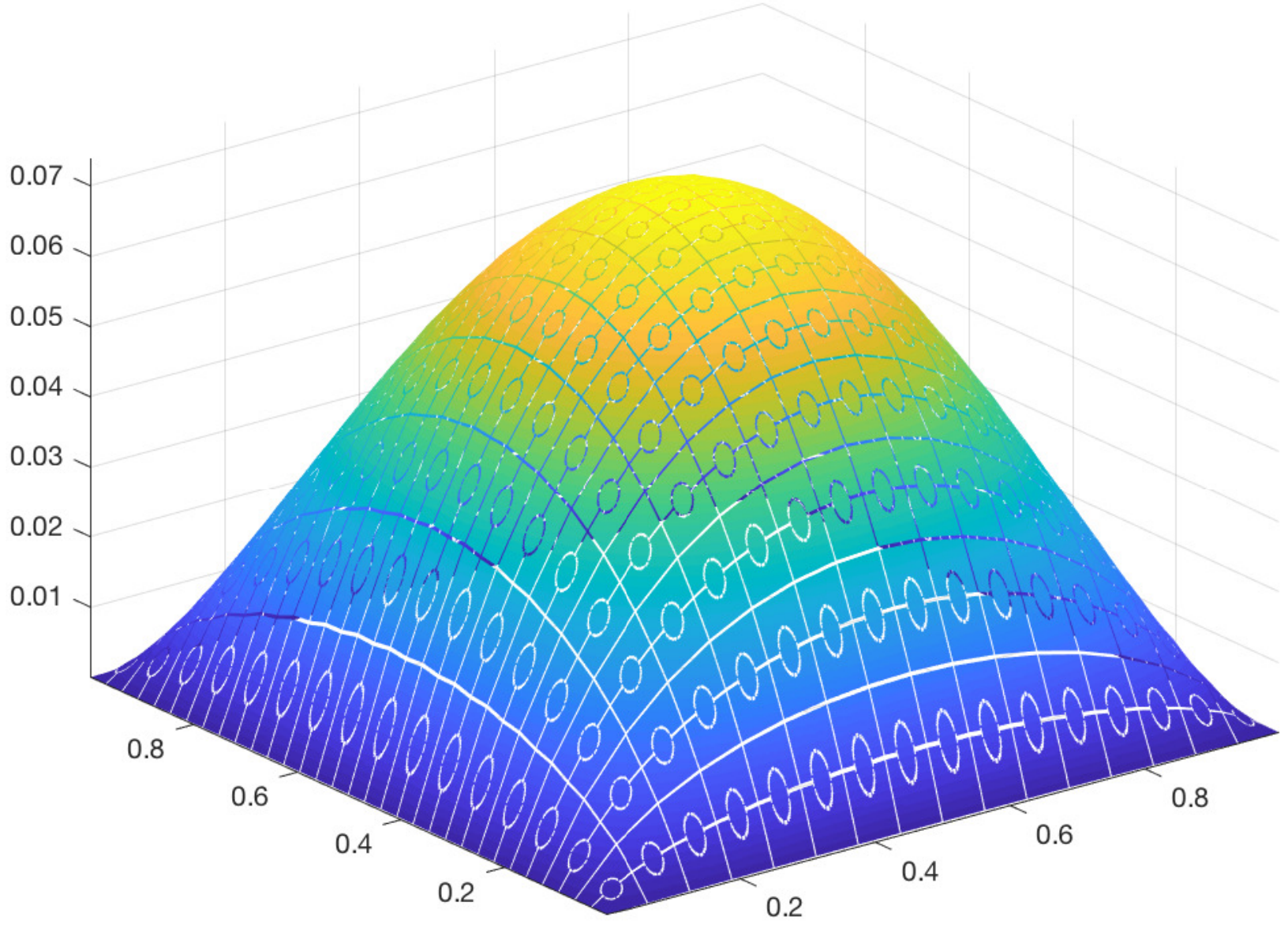}\\
\includegraphics[width=2.2in]{./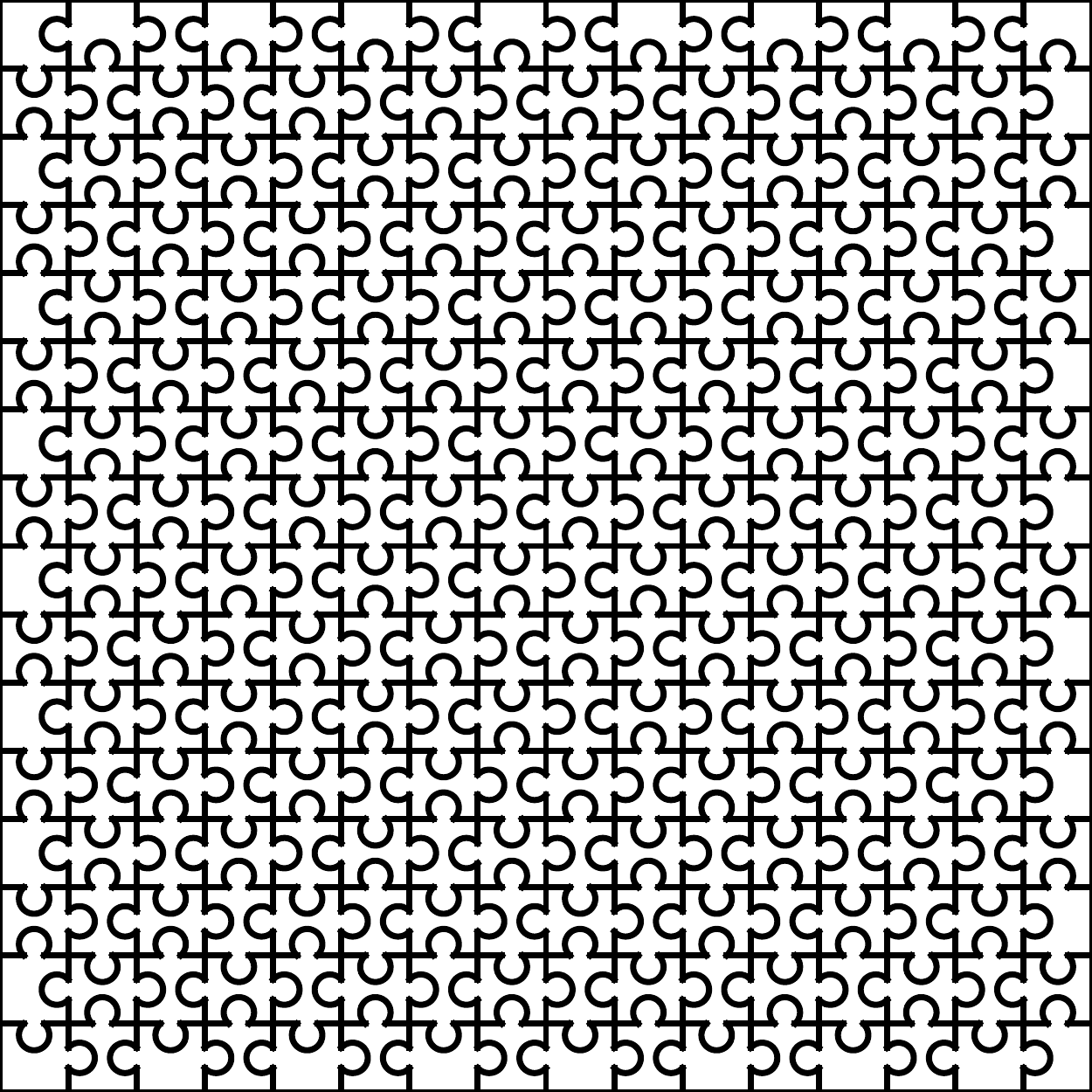}&
\includegraphics[width=3.3in]{./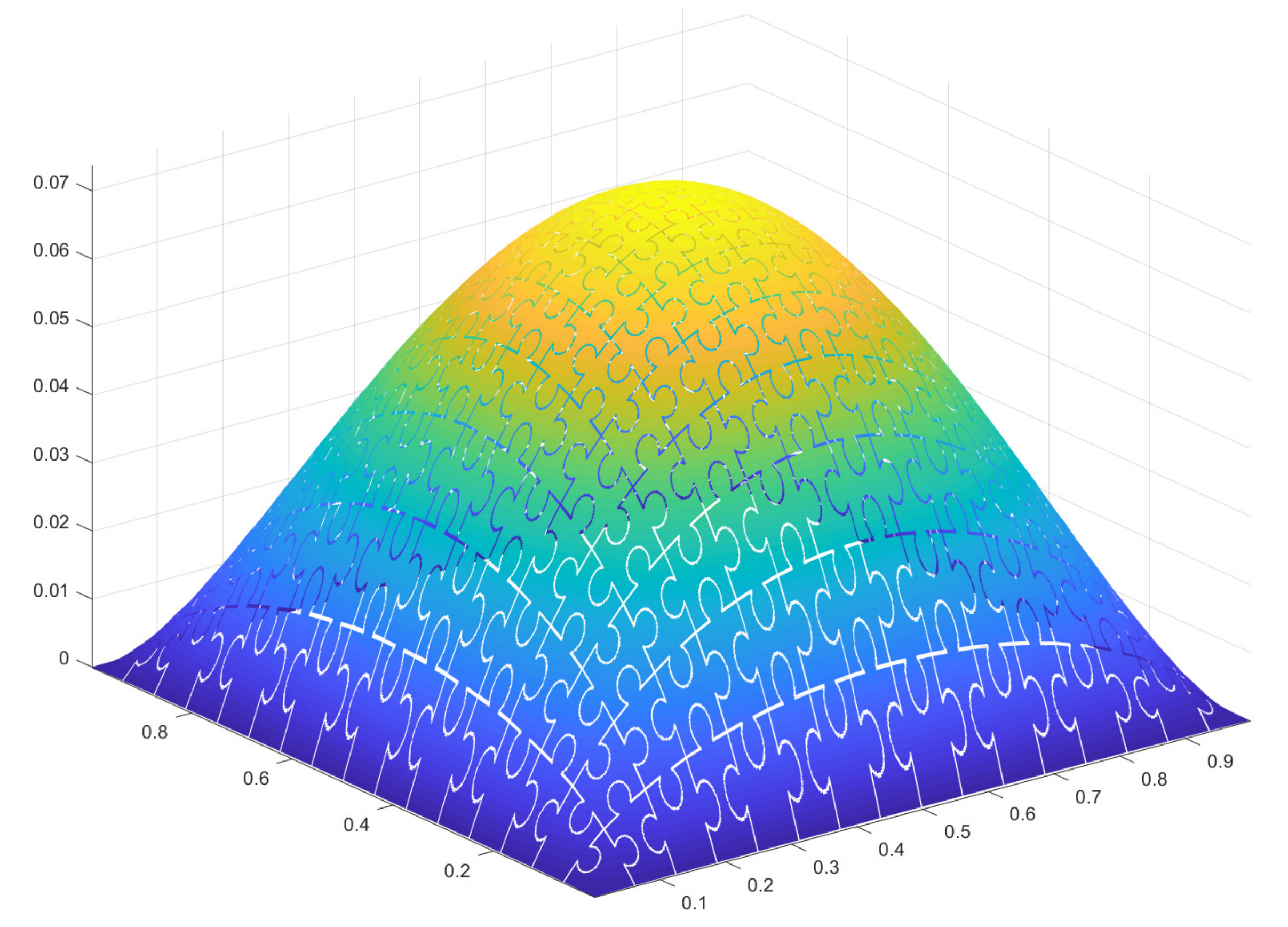}
\end{tabular}
\end{center}
\caption{\label{ThreeMeshes} The Shuriken, Pegboard and Jigsaw meshes
  for $r=16$, as well as the computed finite element solution
  $\hat{u}\in V_1(\cT)$ on these meshes.}
\end{figure}

The discretization error $|u-\hat{u}|_{H^1(\Omega)}$ satisfies
$|u-\hat{u}|_{H^1(\Omega)}^2=|u|_{H^1(\Omega)}^2-|\hat{u}|_{H^1(\Omega)}^2$, making it
straight-forward to compute once $\hat{u}\in V_1(\cT)$ has been
computed.  These errors, and ratios of consecutive errors, are given
in Table~\ref{ThreeMeshConvergence} for each of the families, demonstrating the expected linear
convergence.  As an interesting comparison, we also include the errors
and ratios for the discretizations that would arise had we chosen the
Type 1 definition of $\PP_1(e)$ for the shuriken elements.  This choice leads to local spaces
having dimension $4$ for mesh cells not touching the boundary, in
contrast to the dimension $8$ local spaces using the Type 2 definition
of $\PP_1(e)$.  We recall that $\PP_1(K)\not\subset V_1(K)$ for Type 1
elements (unless $K$ is a straight-edge polygon), and we see that there is no
convergence at all in $H^1$ in this case!  In all three cases, optimal
order convergence, $|u-\hat{u}|_{H^1(\Omega)}=\cO(h^2)$, is obtained
when Type 2 elements are used.  

\begin{table}
\caption{\label{ThreeMeshConvergence} Discretization
errors, $|u-\hat{u}|_1$, and error ratios for the Shuriken, Pegboard,
and Jigsaw meshes.  For the Shuriken meshes, both Type 1 and Type 2 
elements are used.  For the other meshes, only Type 2 elements are used.}
\begin{center}
\begin{tabular}{|c||cc|cc|cc|cc|}\hline
  &\multicolumn{2}{c|}{Shuriken, Type 1}&\multicolumn{2}{c|}{Shuriken,
                                          Type 2}&\multicolumn{2}{c|}{Pegboard}&\multicolumn{2}{c|}{Jigsaw}\\
$r$&$|u-\hat{u}|_{H^1(\Omega)}$&ratio&$|u-\hat{u}|_{H^1(\Omega)}$&ratio&$|u-\hat{u}|_{H^1(\Omega)}$&ratio&$|u-\hat{u}|_{H^1(\Omega)}$&ratio\\\hline
4 &7.882e-02&& 5.629e-02& &3.470e-02&         &3.209e-02&\\
8&8.200e-02&0.961&2.847e-02&1.977&1.726e-02&2.010&1.538e-02&2.086\\
16&8.813e-02&0.930&1.429e-02&1.993&8.537e-03&2.022&7.559e-03&2.035\\
32&9.232e-02&0.955&7.150e-03&1.998&4.233e-03&2.017&3.754e-03&2.014\\
64&9.470e-02&0.975&3.576e-03&1.999&2.106e-03&2.010&1.871e-03&2.006\\
128&9.600e-02&0.987&1.788e-03&2.000&1.050e-03&2.006&9.327e-04&2.006\\
256&9.662e-02&0.993&8.937e-04&2.001&5.239e-04&2.004&4.628e-04&2.015\\
512&9.694e-02&0.997&4.463e-04&2.003&2.611e-04&2.007&2.247e-04&2.060\\\hline
\end{tabular}
\end{center}
\end{table}

\end{example}

\begin{example}[Perturbed Triangle Mesh]
  Let $\Omega$ and $u$ be as in the previous example.  We again
  approximate $u$ by its finite element solution $\hat u\in V_1(\cT)$,
  where the mesh $\cT$ consists perturbed triangular elements, as
  shown in Figure \ref{CurvyTriangleMesh}, each of which has one
  curved edge.  Reference elements, one convex and the other
  non-convex, are obtained by splitting the unit square
  $(0,1)\times(0,1)$ using a circular arc through the vertices $(1,0)$
  and $(0,1)$ whose center is $(-a,-a)$, for some $a>0$.  The radius
  of curvature of this curved edge is $\sqrt{a^2+(1+a)^2}$, and it 
  approaches a straight line as $a$ increases.  More specifically, the
  maximum distance between a point on the curved edge and the closest
  point to it on the line between $(1,0)$
  and $(0,1)$ is $(\sqrt{(2a+1)^2+1}-(2a+1))/\sqrt{2}$, which behaves
  like $1/(4a\sqrt{2})$ as $a\to\infty$.  For the corresponding finite element
  meshes, these reference element pairs are scaled so that their
  straight edges have length $1/r$.
\begin{figure}
\includegraphics[width=0.23\textwidth]{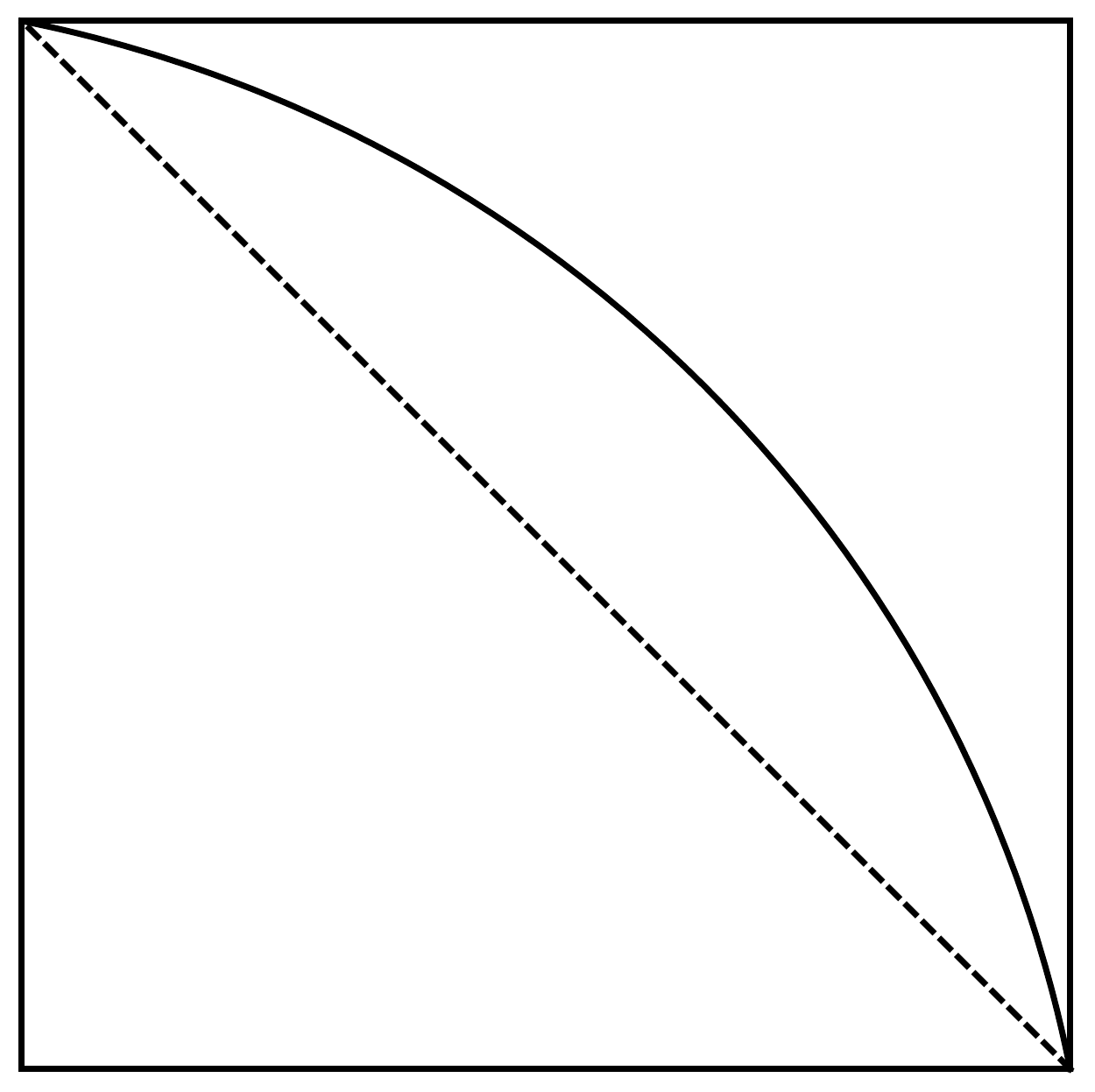}
\includegraphics[width=0.23\textwidth]{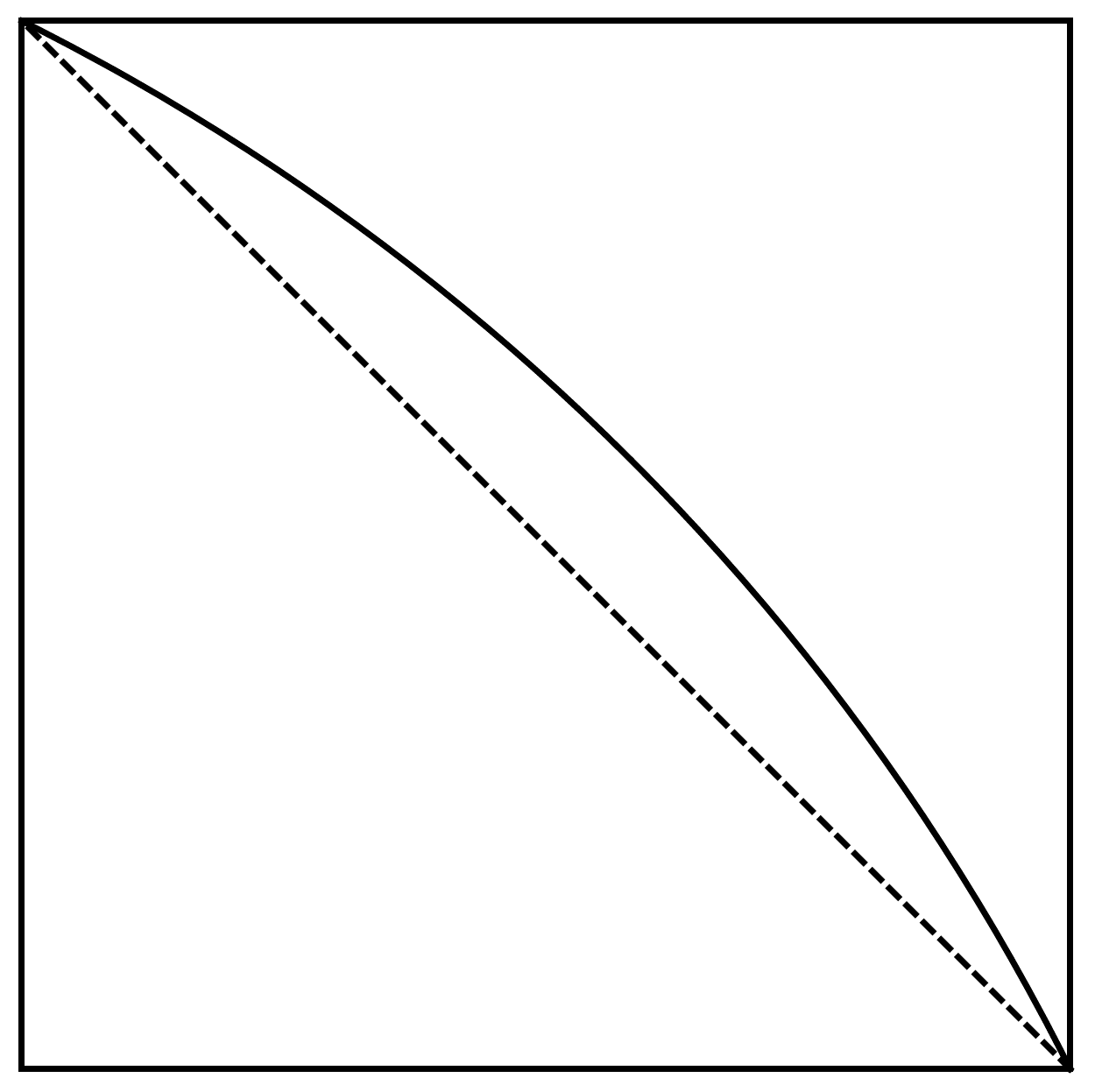}
\includegraphics[width=0.23\textwidth]{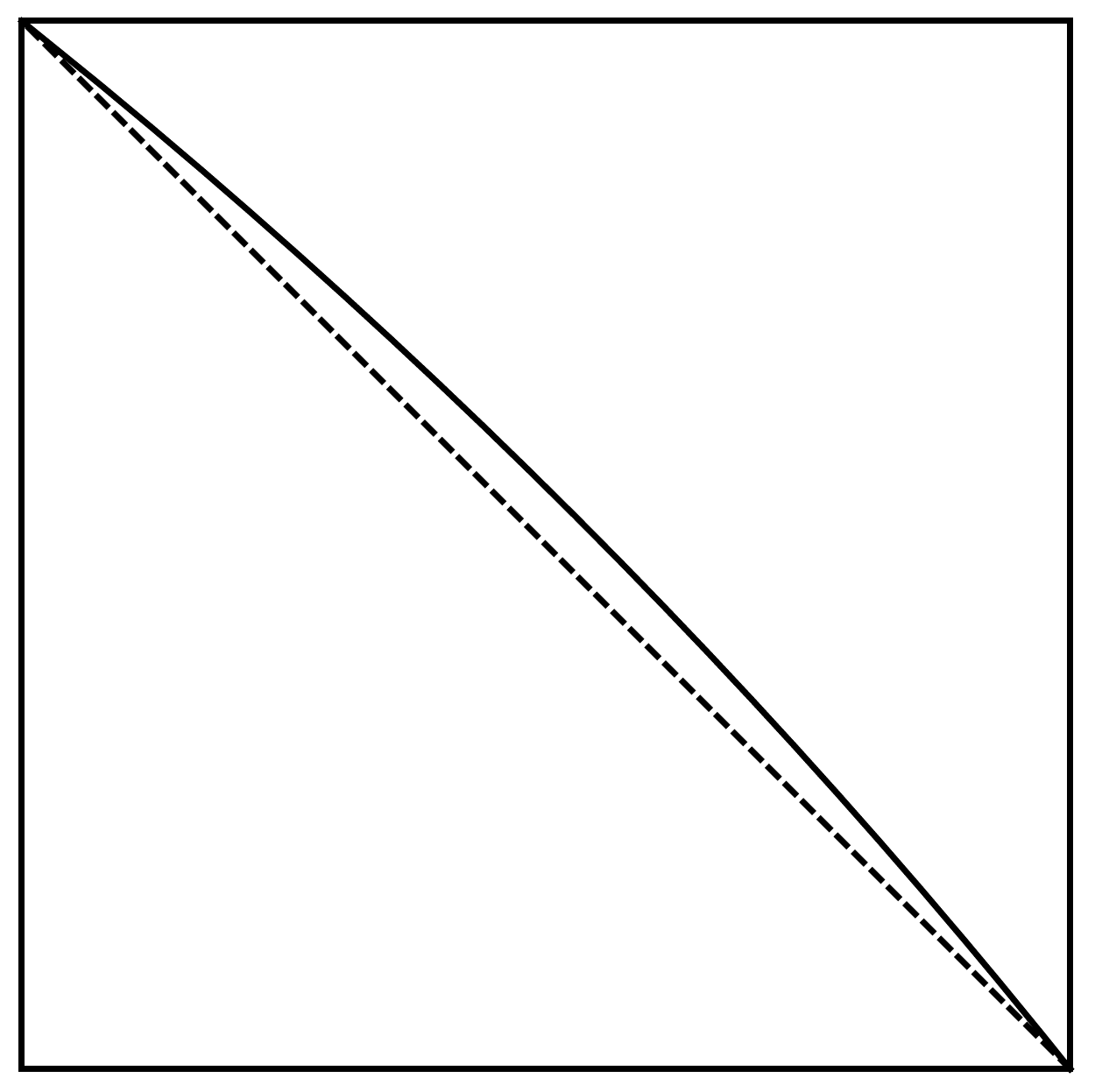}
\includegraphics[width=0.23\textwidth]{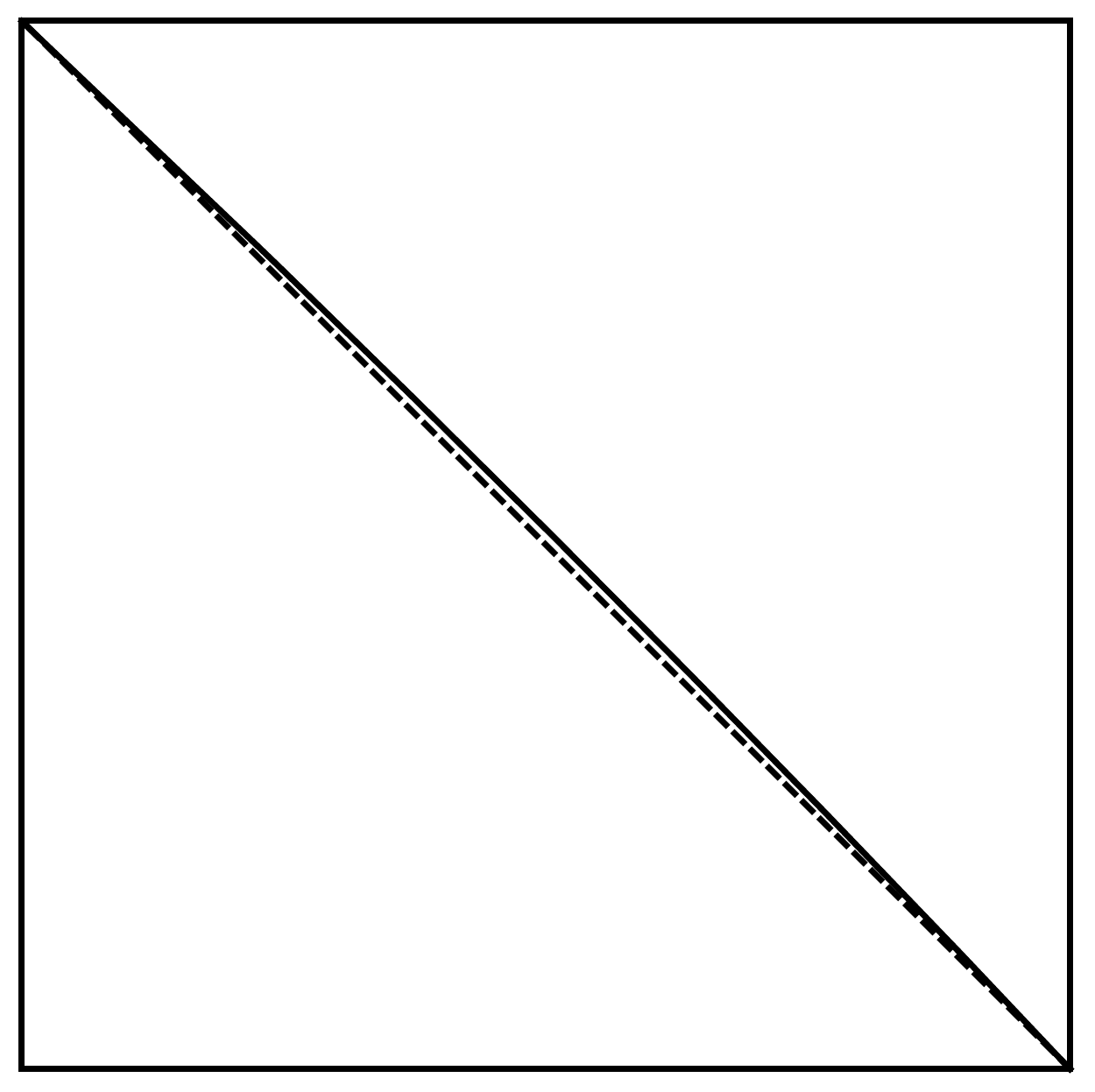}\\
\includegraphics[width=0.23\textwidth]{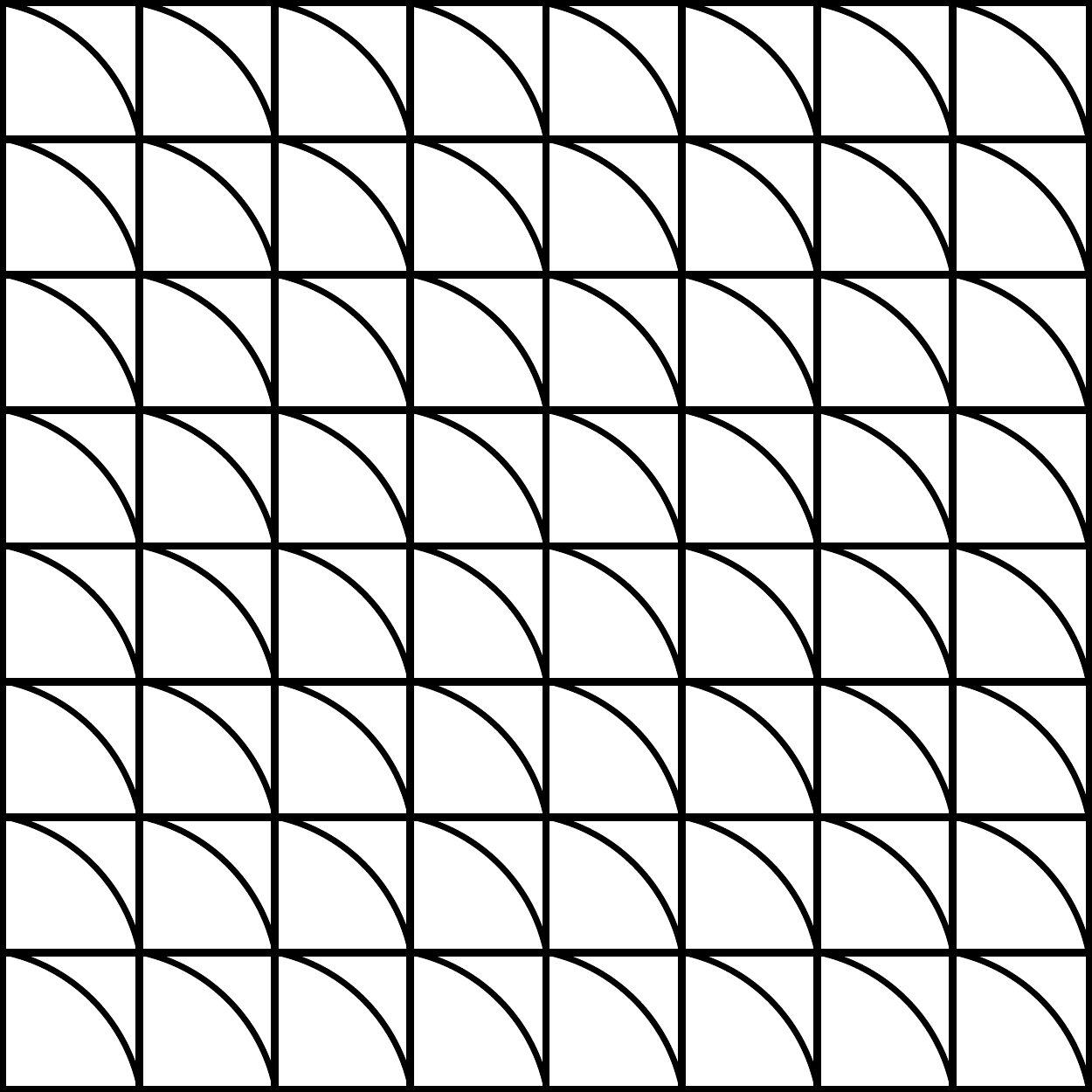}
\includegraphics[width=0.23\textwidth]{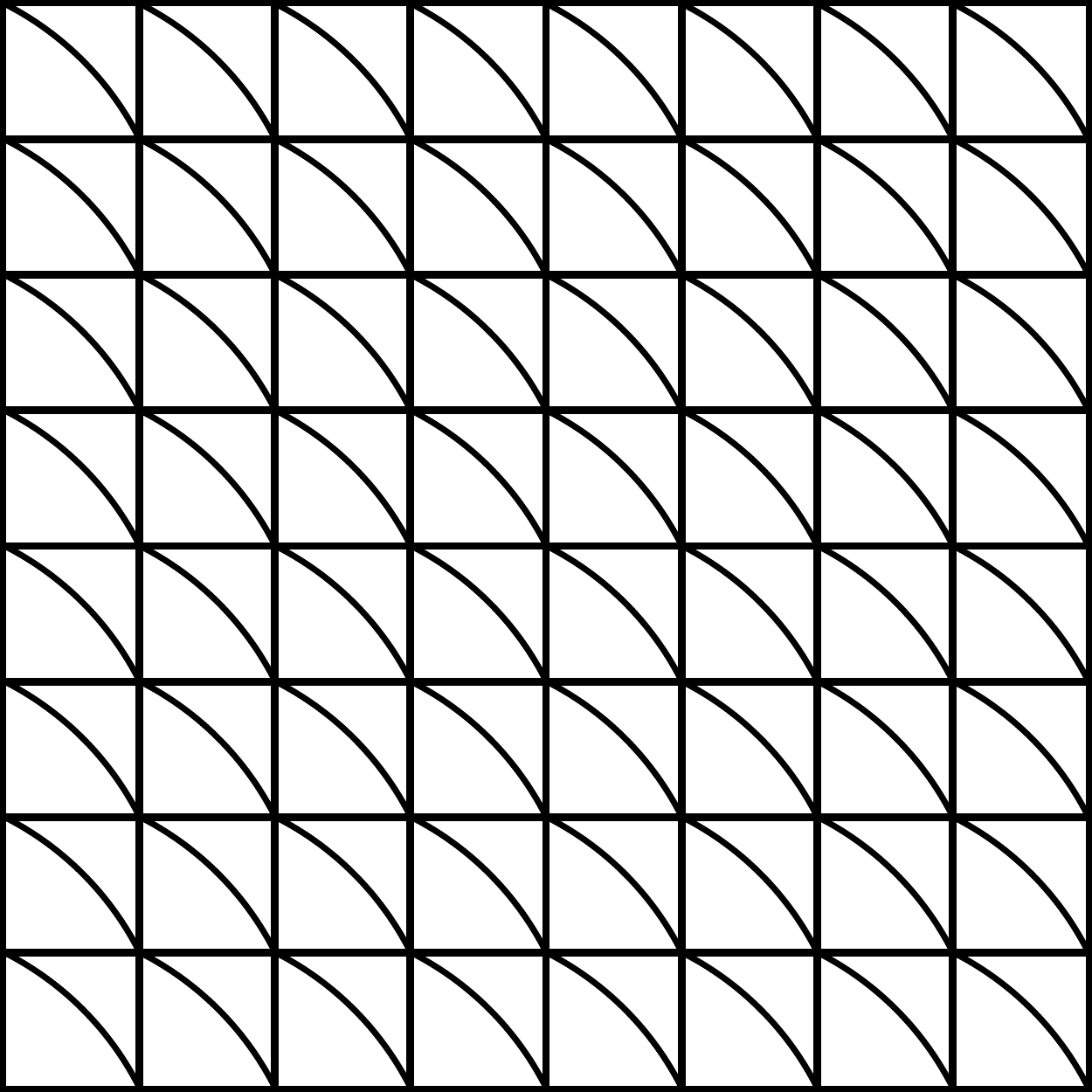}
\includegraphics[width=0.23\textwidth]{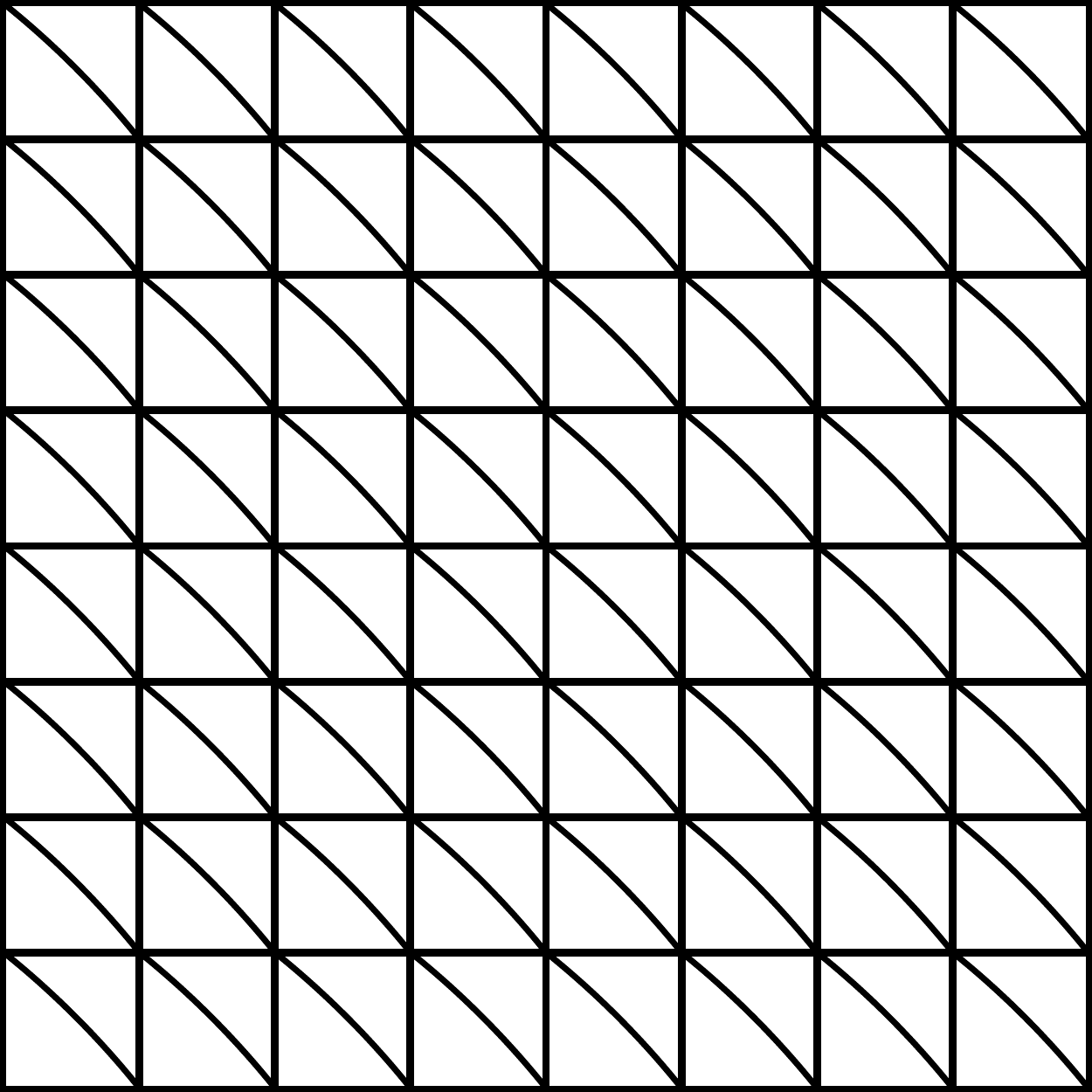}
\includegraphics[width=0.23\textwidth]{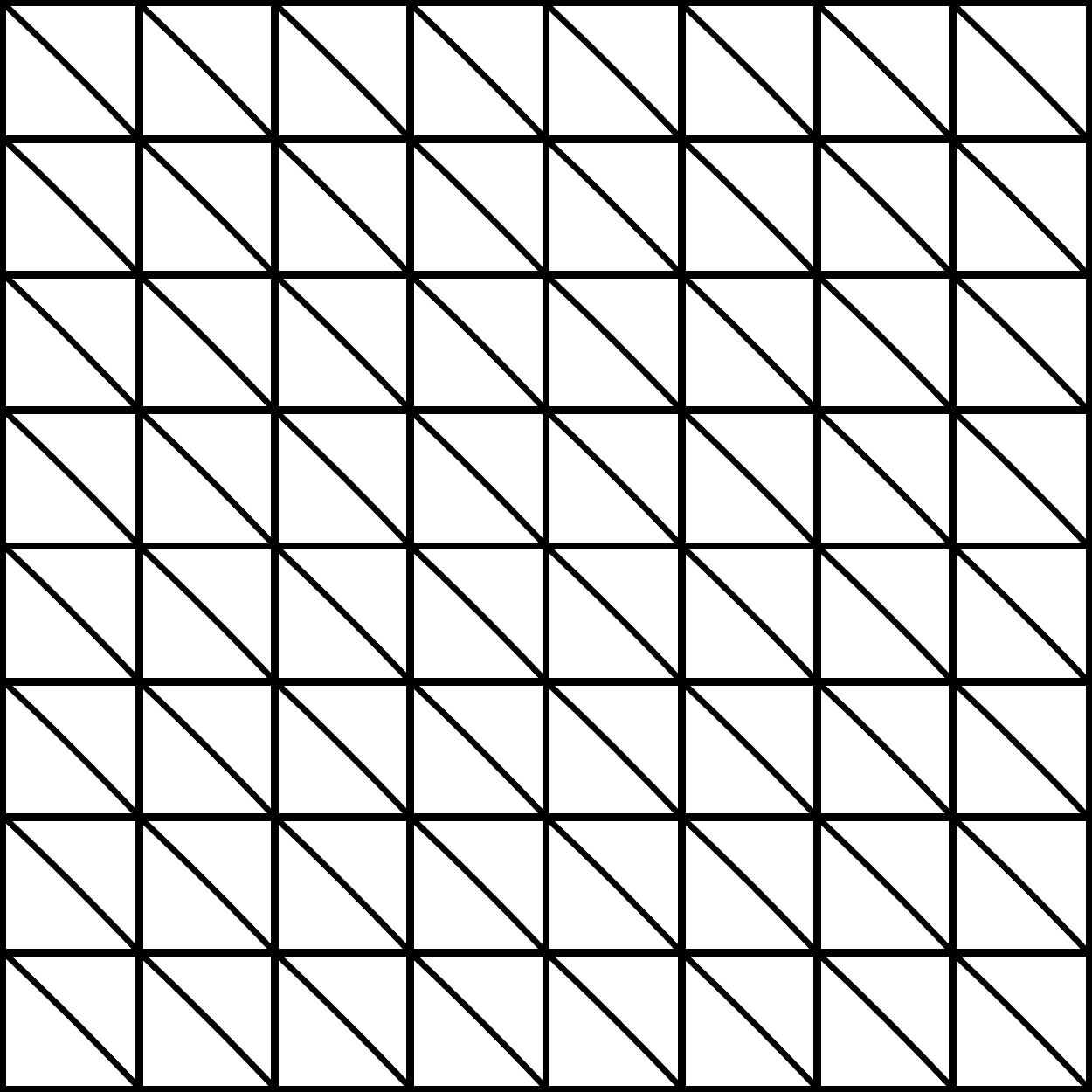}
\caption{\label{CurvyTriangleMesh} Reference elements (top) for the
  Perturbed Triangle meshes (bottom), shown with $r=8$, for
  $a=1/4,1,4,16$ from left to right. The dashed lines are straight ($a=\infty$).}
\end{figure}

In Figure~\ref{CurvyTriangleConvergence}, we report both the errors
$|u-\hat u|_{H^1(\Omega)}$ and the spectral condition numbers
$\kappa_2(A)$ of the stiffness matrices for Type 1 and Type 2 elements
as the meshes is refined, for several values of $a$.  For both types
of elements, $A$ is diagonally rescaled,
$a_{ij}\longleftarrow a_{ij}/\sqrt{a_{ii}a_{jj}}$, before computing
the condition numbers.  As expected, the Type 2 elements exhibit
optimal order convergence throughout the refinements.  For Type 1
elements, the convergence curves improve as the $a$ is increased, in
the sense that they stay roughly parallel to their Type 2 counterparts
through more levels of refinement, but the convergence curves for Type
1 elements eventually level off, indicating a threshold beyond which
the error does not decrease.  The condition number plots for the Type
1 and Type 2 elements provide a complementary comparison, for which
the Type 2 elements yield condition numbers that are
\textit{eventually} close to, and grow at the same rate as, those of
the Type 1 elements, but may be significantly larger than their Type 1
counterparts for coarser meshes when the curved edges are nearly
straight.  The condition numbers for Type 1 elements grow like $r^2$
as the mesh is refined (i.e. like $\dim(V)$), which is accordance with
standard linear (bilinear) elements on triangular (rectangular)
meshes.  We observe, based on computations done for
$a=1/4,1,4,\ldots,4096$, that the condition numbers on the coarsest
meshes ($r=4$) for Type 2 elements appear to grow quadratically in
$a$.  We also observe an apparent correlation between when the
convergence curves for Type 1 elements tend to level off and when the
condition numbers for Type 2 elements transition from a relatively
flat phase to behaving like their Type 1 counterparts.  It is a topic
of future investigation to better understand how element shapes and
``polynomial orders'' $p$ affect convergence and conditioning for both
types of elements.
\begin{figure}
\includegraphics[width=0.45\textwidth]{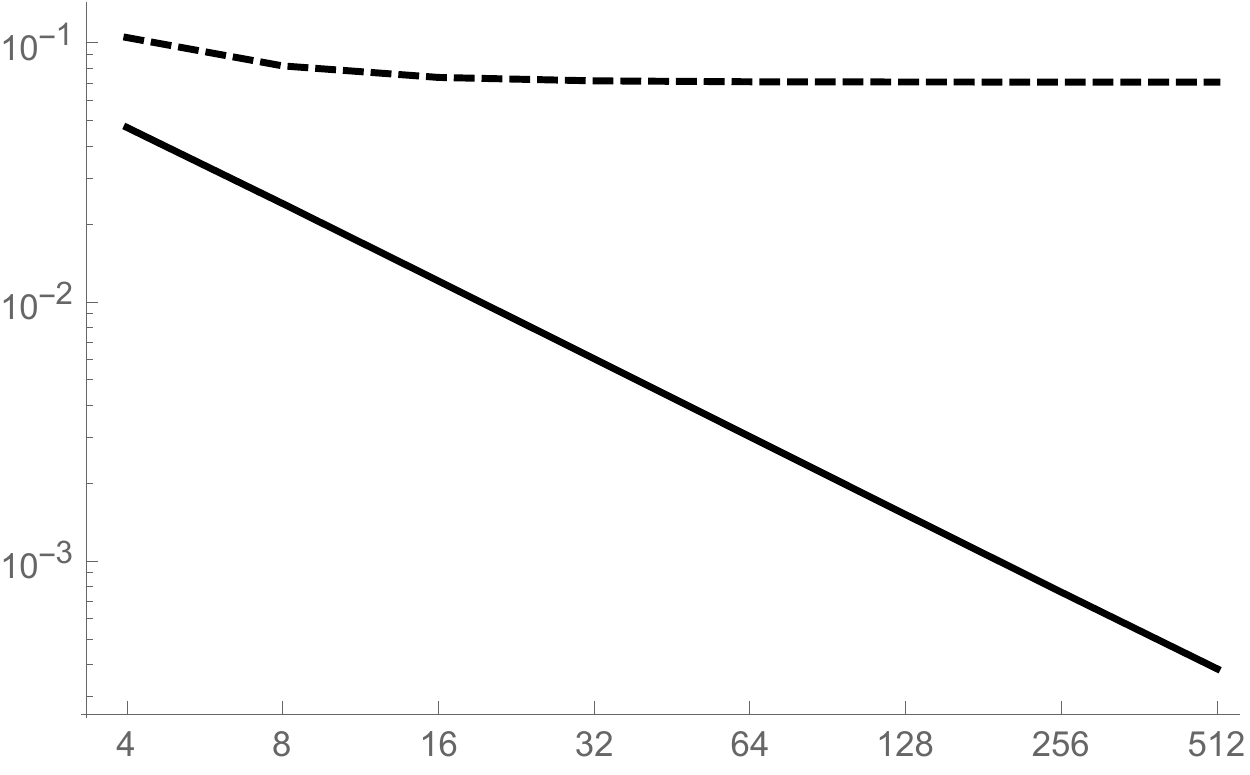}
\includegraphics[width=0.45\textwidth]{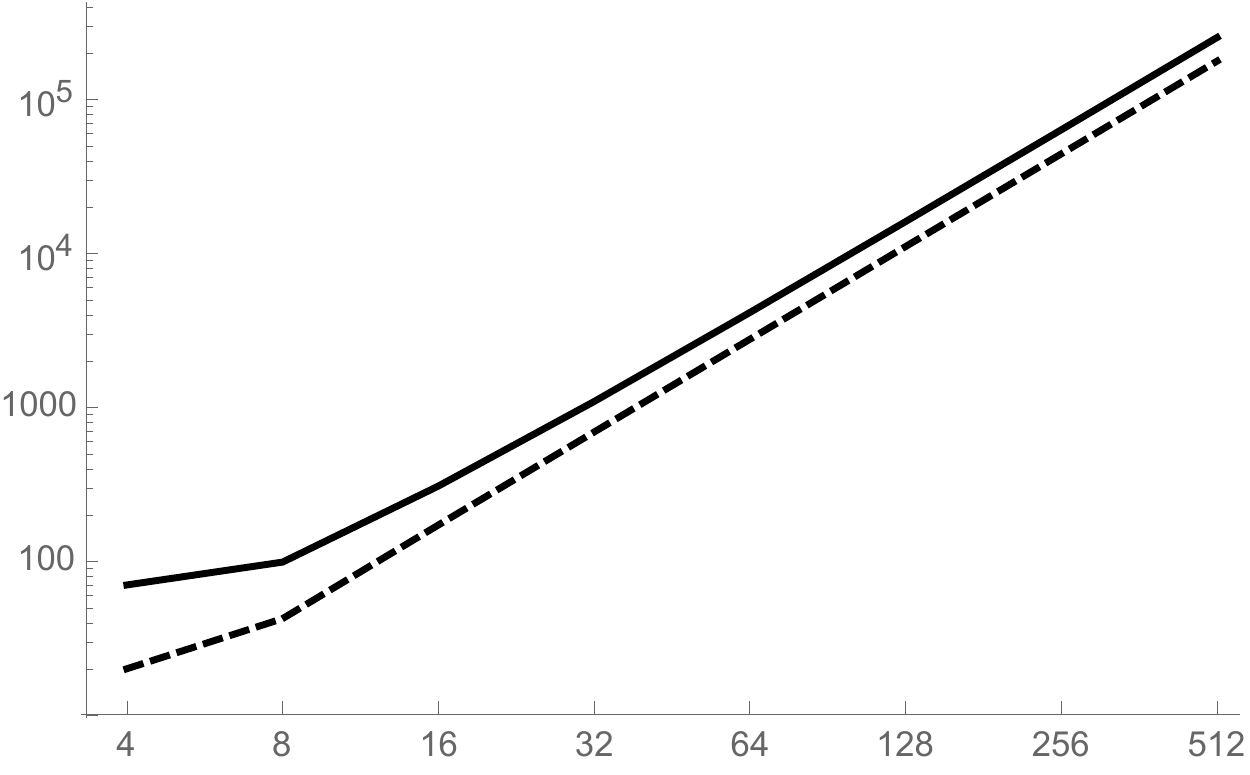}
\includegraphics[width=0.45\textwidth]{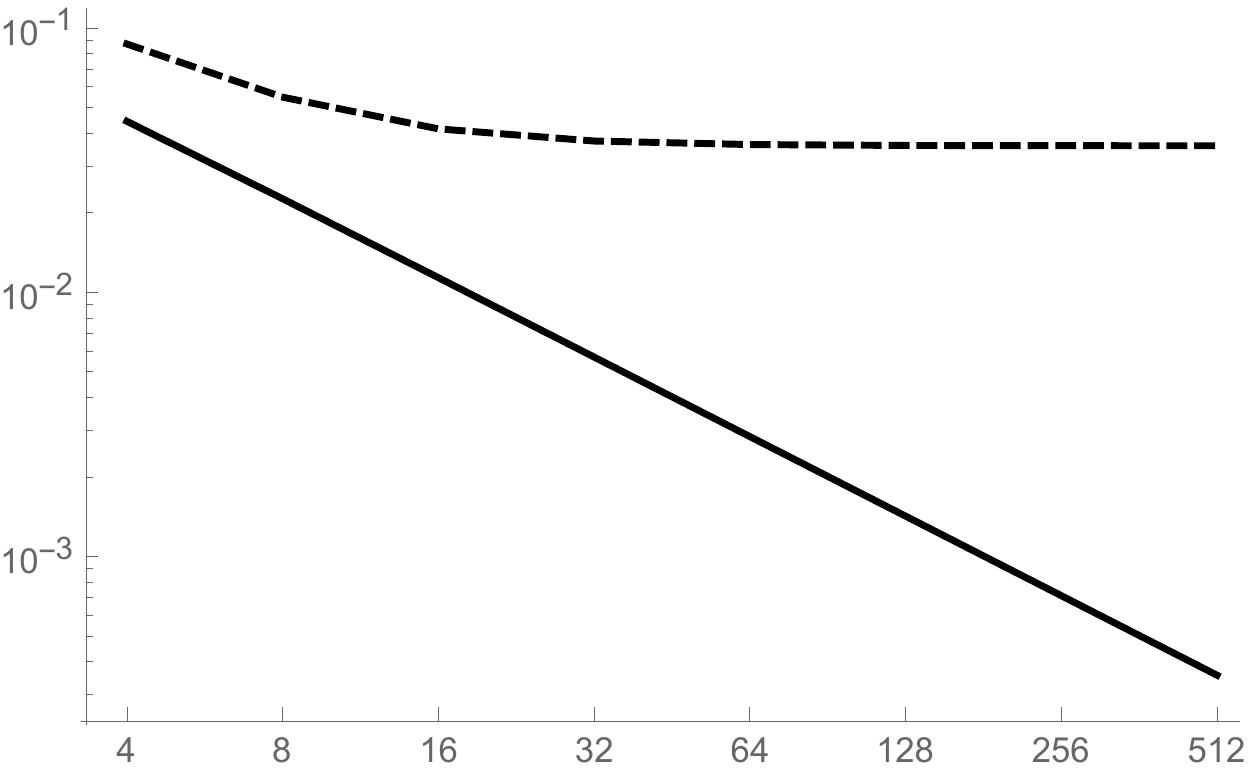}
\includegraphics[width=0.45\textwidth]{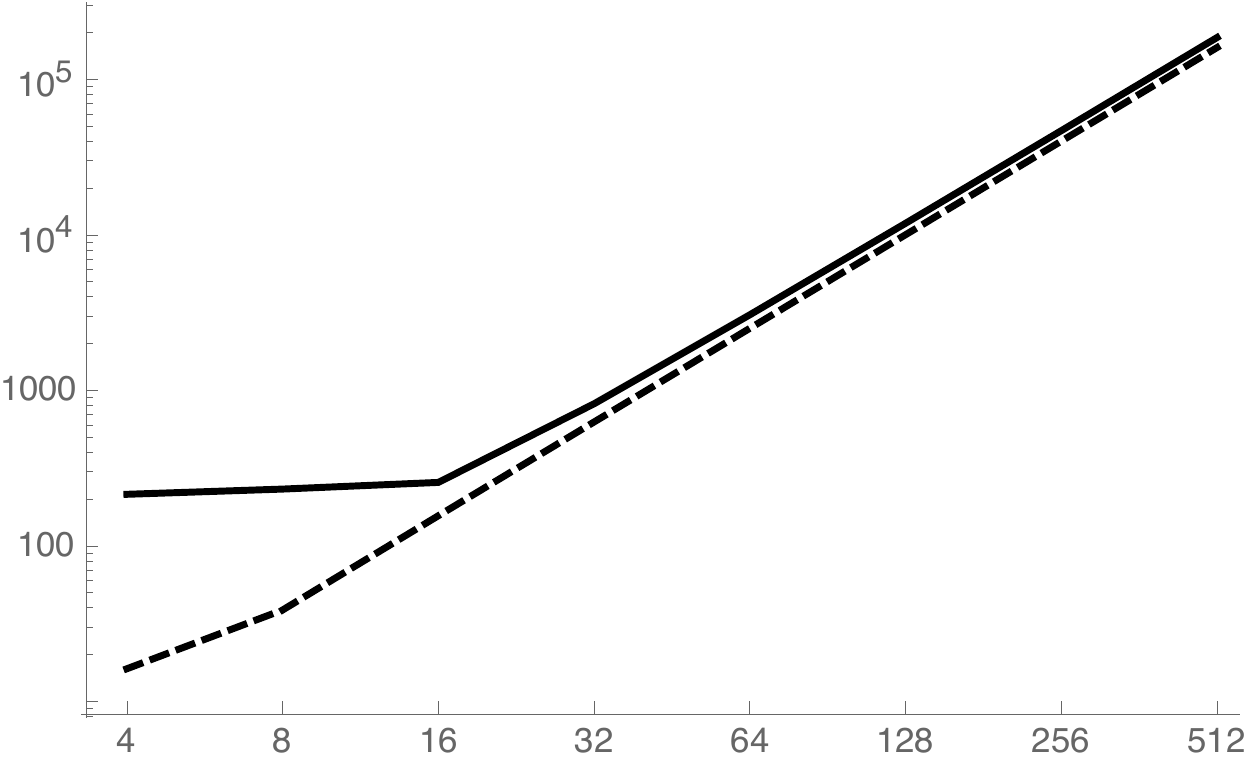}
\includegraphics[width=0.45\textwidth]{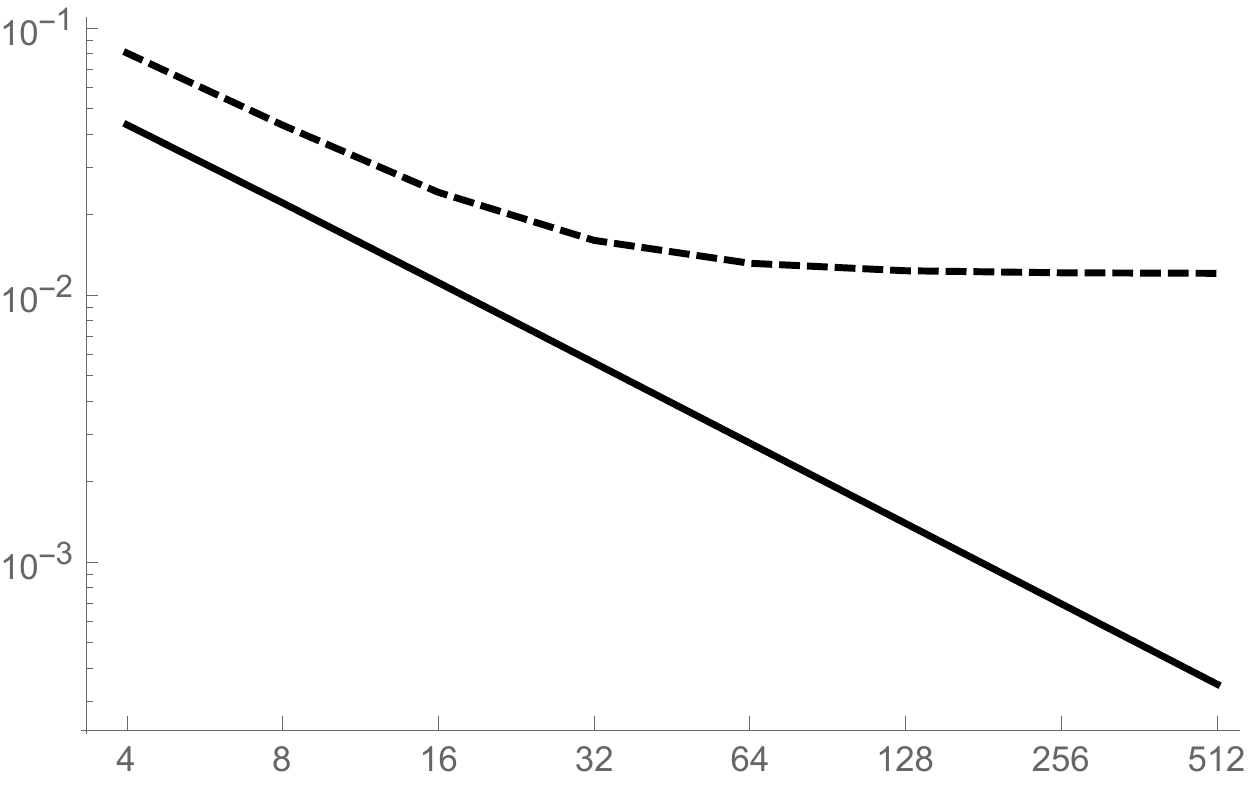}
\includegraphics[width=0.45\textwidth]{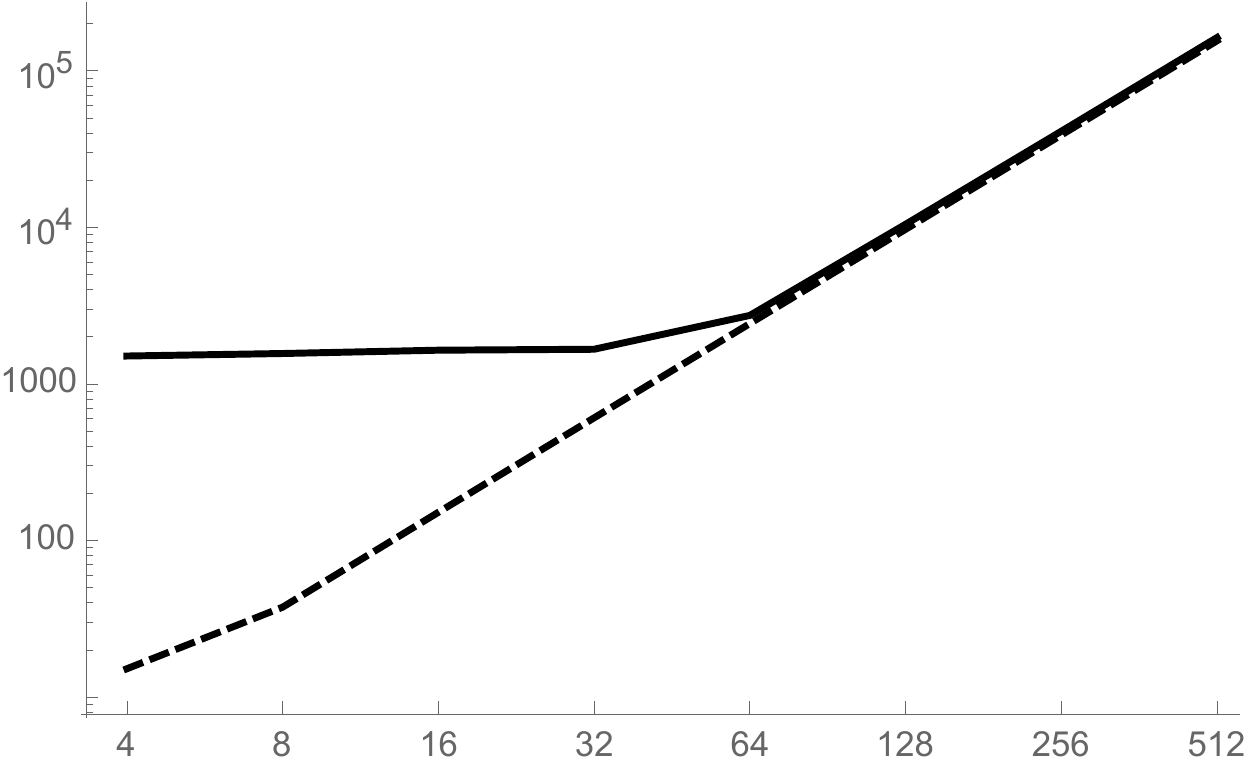}
\includegraphics[width=0.45\textwidth]{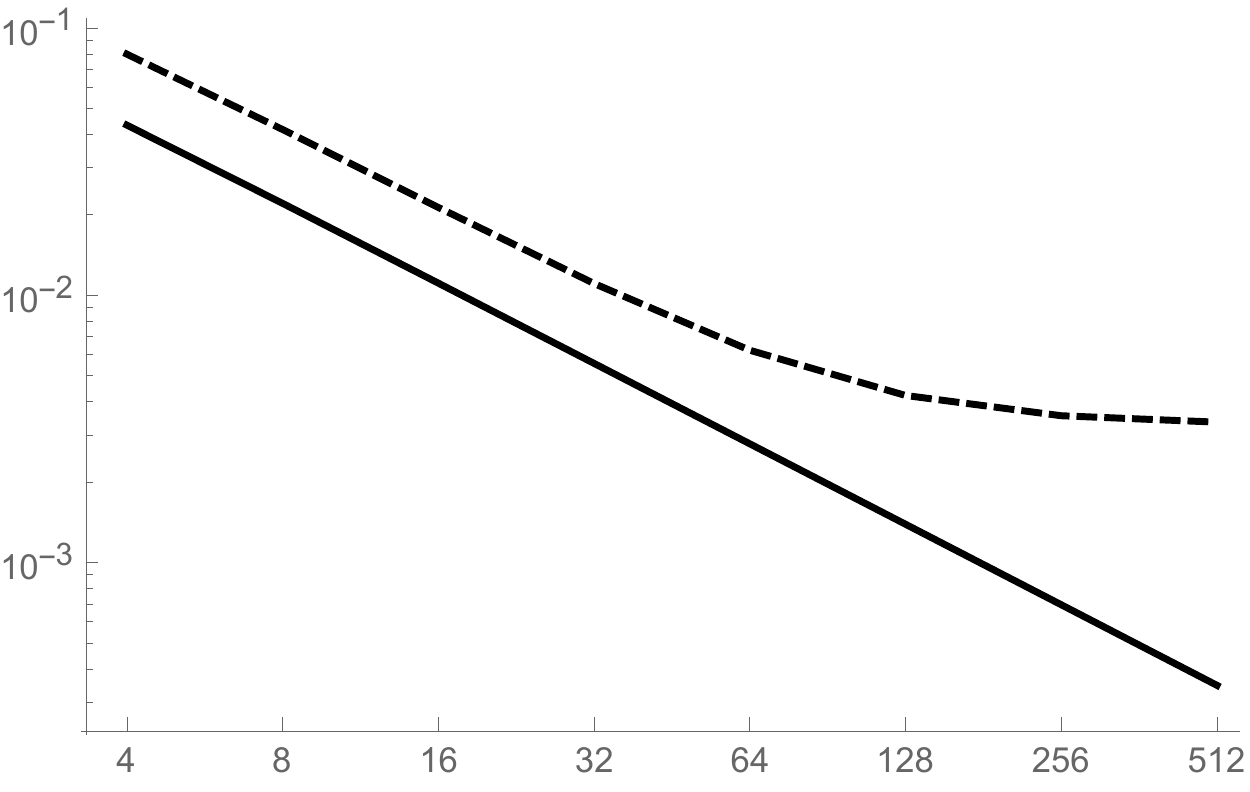}
\includegraphics[width=0.45\textwidth]{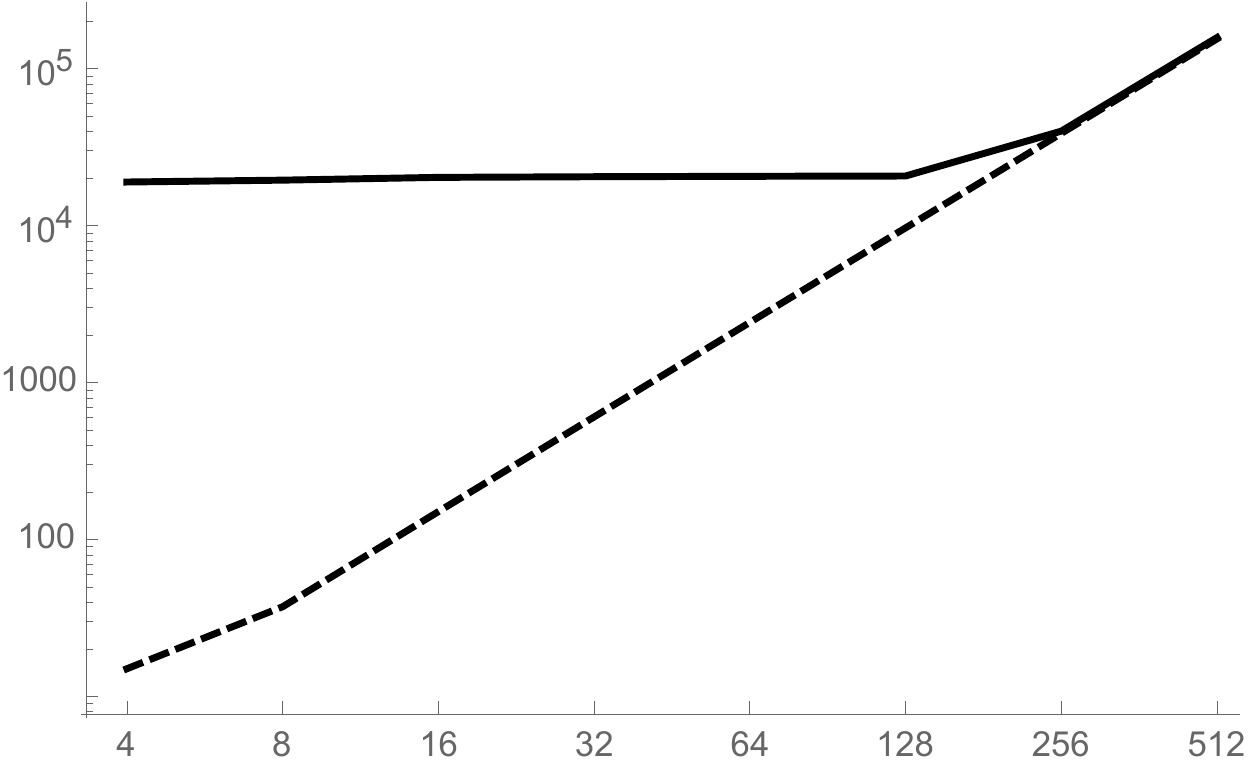}
\caption{\label{CurvyTriangleConvergence} Log-Log plots of $|u-\hat{u}|_{H^1(\Omega)}$ (left column)
  and $\kappa_2(A)$ (right column) with respect to the mesh parameter
  $r$ (horizontal axis), for Type 1 (dashed) and Type 2
  (solid) elements on Perturbed Triangle mesh families $a=1/4,1,4,16$
  (from top to bottom).}
\end{figure}
\end{example}

\begin{example}[L-Shaped Domain]\label{LShape}
For our final set of experiments, we again consider the problem
\begin{align*}
-\Delta u=1\mbox{ in }\Omega\quad,\quad u=0\mbox{ on }\partial\Omega~,
\end{align*}
but on the (rotated) L-shaped domain
$\Omega=(-1,1)\times (-1,1)\setminus [0,1]\times [-1,0]$ (see
Figure~\ref{LShapeFig}).  This solution, though not known explicitly,
is known to have a singularity at the origin, behaving asymptotically
like $|x|^{2/3}$ near the origin
(cf.~\cite{Wigley1964,MR3396210,Kozlov1997}).  In standard finite
element computations the efficient approximation of such a singular
solution would be achieved by targeted refinement of mesh cells toward
the singular point that is either guided by local error indicators
(computed a posteriori) or by specific knowledge of the local singular
behavior to determine an a priori mesh grading strategy.  A
head-to-head empirical comparison of these two types of refinement
strategies is provided in~\cite{liNM2014}.  Others have sought to
address the issue of singularities by augmenting standard polynomial
finite elements with (local) enrichment functions having the types of
singularities expected of the solution based on a priori knowledge
(cf. \cite{Fix1973}, \cite{Belytschko1999} XFEM, \cite{Stroboulis2000}
GFEM).   

Our approach for this problem is different.
We use the fact that, if a mesh cell $K$ has a
non-convex corner, then the local space $V_1(K)$ automatically
contains functions having the correct type of singularity for that
geometry.  Remark~\ref{InterpBestApprox} suggests improved
approximation power for interpolation of functions having the same
kind of singularities, and optimal order convergence was demonstrated
in~\cite{Anand2018} for interpolation 
error in $L^2(\Omega)$ of a harmonic function having an
$|x|^{2/3}$-type singularity on precisely the kinds of meshes shown
in Figure~\ref{LShapeFig}.  That work did not, however, consider
interpolation error or discretization error
in $H^1(\Omega)$  for such a problem.
As before, we use a parameter $r$ to
describe the meshes in this family.  
  The $r$th mesh in this family, $\cT_r$, consists of one L-shaped
  element,
  $K_L=(-1/3,1/3)\times (-1/3,1/3)\setminus[0,1/3]\times[1/3,0]$, and
  $24r^2$ congruent squares of size $(3r)^{-1}\times(3r)^{-1}$, see
  Figure~\ref{LShapeFig}.  We note that there are $r$ squares touching
  each of the short edges of $\partial K\setminus\partial \Omega$, and $\partial K_L$ has
  $6r+2$ vertices.  Although none of the edges in these meshes are
  curved, the optimal convergence rates enabled by the single L-shaped
  element illustrates how a result like~\eqref{InterpBestApprox_H1}
  might be used to prove what is empirically observed.  For this
  example, we provide such an analysis.

\begin{figure}
\begin{center}
\includegraphics[width=3.2in]{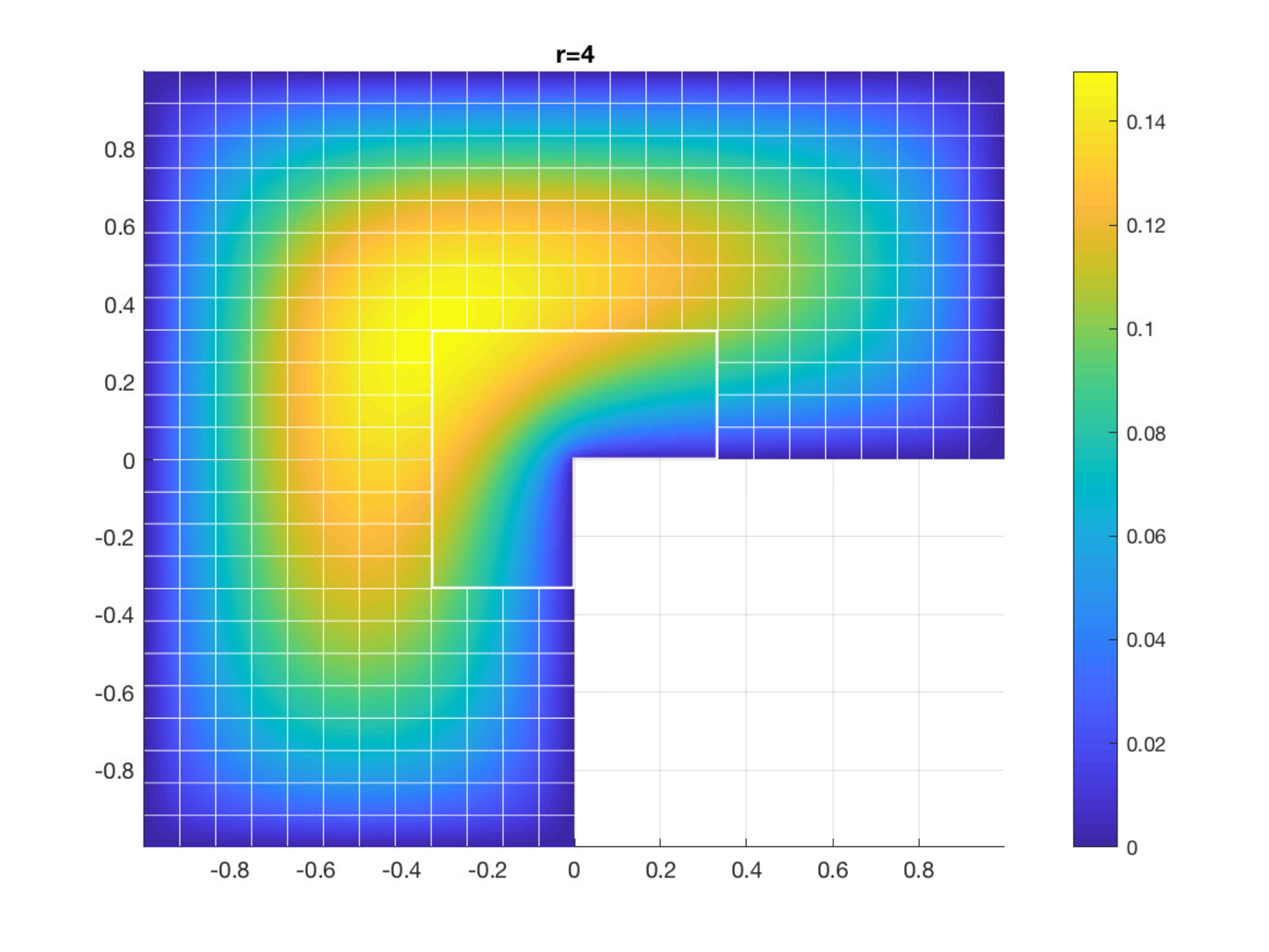}
\includegraphics[width=3.2in]{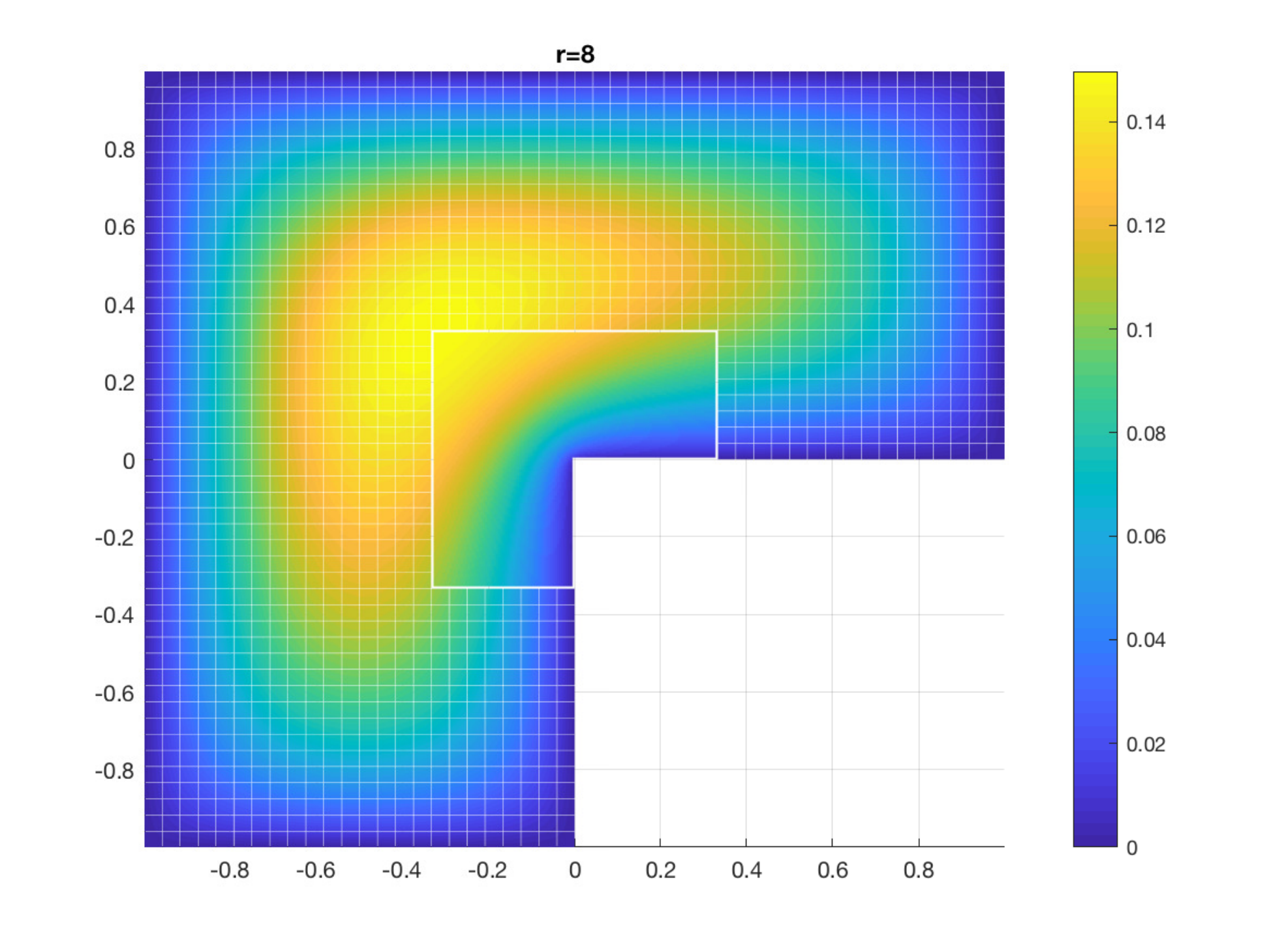}
\end{center}
\caption{\label{LShapeFig} The L-shaped domain
  together with the meshes and contour plots of the computed solutions
  $\hat{u}\in \widetilde{V}_1(\cT)$ corresponding to $r=4$ (left) and $r=8$.}
\end{figure}

As with Example~\ref{ThreeMeshesExample}, we use a highly accurate 
approximation of $|u|_{H^1(\Omega)}^2$, together with the identity
$|u-\hat{u}|_{H^1(\Omega)}^2=|u|_{H^1(\Omega)}^2-|\hat{u}|_{H^1(\Omega)}^2$.
In the previous example, we used a Fourier expansion to obtain our
approximation of $|u|_{H^1(\Omega)}^2$.  Here, we use the techniques 
developed in~\cite{Ovall2018}.  Letting $w=-|x|^2/4$, and recognizing
that $-\Delta w = 1$, we have $u=v+w$, where $\Delta v = 0$ in
$\Omega$ and $v=-w$ on $\partial\Omega$, and it follows that
\begin{align}
|u|_{H^1(\Omega)}^2&=|w|_{H^1(\Omega)}^2+|v|_{H^1(\Omega)}^2+2\int_{\Omega}\nabla
  v\cdot\nabla w\,ds\nonumber\\
&=\frac{1}{2}+\int_{\partial{\Omega}}\frac{\partial v}{\partial
  n}\,v\,ds+2\int_{\partial \Omega}\frac{\partial v}{\partial
  n}\,w\,ds\label{H1NormTrick}\\
&=\frac{1}{2}-\int_{\partial \Omega}\frac{\partial v}{\partial
  n}\,v\,ds
  \approx0.21407580269~.\nonumber
\end{align}
The integral $\int_{\partial \Omega}\frac{\partial v}{\partial
  n}\,v\,ds$ is approximated using the techniques from~\cite{Ovall2018}.

For the square elements $K$ in $\cT_r$, $V_1(K)$ consists of the
standard bilinear finite elements.  The element stiffness matrices for
these elements remain that same (up to symmetric permutation) for all
meshes,
\begin{align*}
A_K=\frac{1}{6}\begin{pmatrix}4&-1&-2&-1\\-1&4&-1&-2\\-2&-1&4&-1\\-1&-2&-1&4\end{pmatrix}~.
\end{align*}
The local space $V_1(K_L)$ for single L-shaped element $K_L$ changes
from mesh to mesh, so its element stiffness matrix $A_L$ must be
recomputed on each mesh.  The number of rows/columns of $A_L$ on
$\cT_r$ is $6r+2$.

Revisiting the interpolation identity~\eqref{InterpolationPythagorean}
in our present context, we have
\begin{align*}
|u-\cI_K u|_{H^1(K)}^2&= |u^K-\cI_K^K u|_{H^1(K)}^2+|u^{\partial
                        K}-\cI_K^{\partial K} u|_{H^1(K)}^2\\
&=|u^K|_{H^1(K)}^2+|u^{\partial K}-\cI_K^{\partial K} u|_{H^1(K)}^2~,
\end{align*}
because $V_1^{K}(K)=\{0\}$.  Since $-\Delta u^K=1$ in $K$ and $u^K=0$
on $\partial K$, we have 
\begin{align*}
|u^K|_{H^1(K)}^2=\int_K u^K\,dx\leq |K|^{1/2}\|u^K\|_{L^2(K)}\leq |K|^{1/2}h_K|u^K|_{H^1(K)}~.
\end{align*}
For all of the square elements $K$, which shrink as $r$ increases, the term
$|u^K|_{H^1(K)}$ is not problematic.  However, the L-shaped
element $K=K_L$ does not shrink as $r$ increases so the estimate
$|u^K|_{H^1(K)}\leq |K|^{1/2}h_K$ gives no guarantee of convergence at
all, much less at the optimal rate.  In fact, if we approximate $u$ by 
$\hat{u}\in V_1(\cT_r)$ for this family of meshes, we do not get
convergence in $H^1(\Omega)$!  

This issue is simple to fix, however, and the remedy we now describe
is suggestive of a more general principle that we aim to explore in
detail in subsequent work.  Because $-\Delta u^K = -\Delta u = 1$ on
$K=K_L$, we include the interior bubble function $\phi\in
V_2^{K}(K_L)$ satisfying $-\Delta \phi=1$ in $K_L$ and $\phi=0$ on
$\partial K_L$.  The necessary quantities associated with $\phi$
can be computed in the same manner as described~\eqref{H1NormTrick}
and its paragraph.  We take
$\widetilde{V}_1(K_L)=\mathrm{span}(V_1(K_L)\cup\{\phi\})$ and 
$\widetilde{V}_1(\cT_r)=\mathrm{span}(V_1(\cT_r)\cup\{\phi\})$.
We recall that $\int_{\Omega}\nabla \psi\cdot\nabla \phi\,dx=
\int_{K_L}\nabla \psi\cdot\nabla \phi\,dx=0$ for all $\psi\in
V_1(\cT_r)$, so adding this function does not increase the cost of 
assembling and solving the necessary linear system. 

With this enrichment by $\phi$, we have, on $K=K_L$,
\begin{align*}
|u-\cI_K u|_{H^1(K)}&= |u^{\partial K}-\cI_K^{\partial K} u|_{H^1(K)}~.
\end{align*}
Since $u^{\partial K}-\cI_K^{\partial K} u$ is harmonic on $K_L$,
Dirchlet's principle ensures that, on $K=K_L$,
\begin{align*}
|u^{\partial K}-\cI_K^{\partial K}
  u|_{H^1(K)}=\inf\{|\xi|_{H^1(\Omega)}:\,
\xi\in H^1(K_L)\mbox{ and }\xi = u^{\partial K}-\cI_K^{\partial K}
  u\mbox{ on }\partial K_L\}~.
\end{align*}
This can be estimated by a technique similar in spirit to that given
in \cite[Theorem 4.1]{GilletteRand2016}.  The argument involves
creating a (fictitious) sub-triangulation of $K_L$, taking $\phi$ to be
the piecewise linear interpolant of $u^{\partial K}$ (or $u$) on this
sub-triangulation, and using standard interpolation error estimates.
However, unlike the estimate in \cite[Theorem 4.1]{GilletteRand2016},
which assumes $H^2$ regularity of $u^{\partial K}$, we use
geometrically graded sub-triangulations, as pictured in
Figure~\ref{KLSubtriangulation}, and use estimates from
\cite{Apel1996,Li2009} 
to deduce that
$|u^{\partial K}-\cI_K^{\partial K} u|_{H^1(K)}\leq C_K r^{-1}$ on
$K=K_L$, where $C_K$ depends only on the mesh grading parameter and
the norm of $u^{\partial K}$ (or $u$) in an appropriate weighted
Sobolev space.  Combining this with the simple estimates from
Theorem~\ref{InterpolationError} for the square elements, and we see
that $|u-\cI u|_{H^1(\Omega)}\leq C r^{-1}$, which is the rate of convergence
observed in Table~\ref{LShapeTable}.  We emphasize that
sub-triangulations of $K_L$ are purely for the purpose of this
interpolation error estimate, and are not used at any point in the
actual computations.

\begin{figure}
\begin{center}
\includegraphics[width=2.7in]{./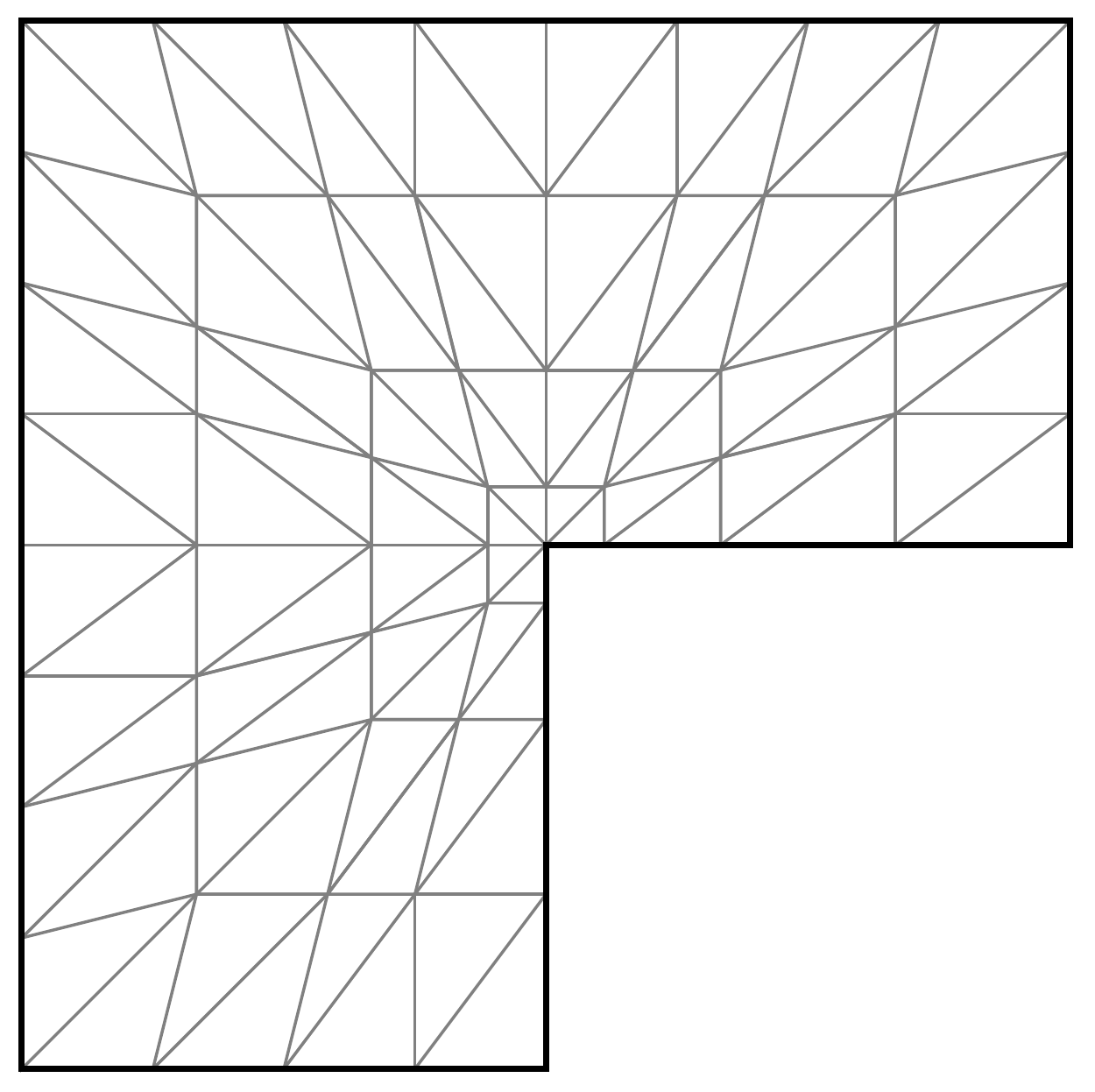}
\includegraphics[width=2.7in]{./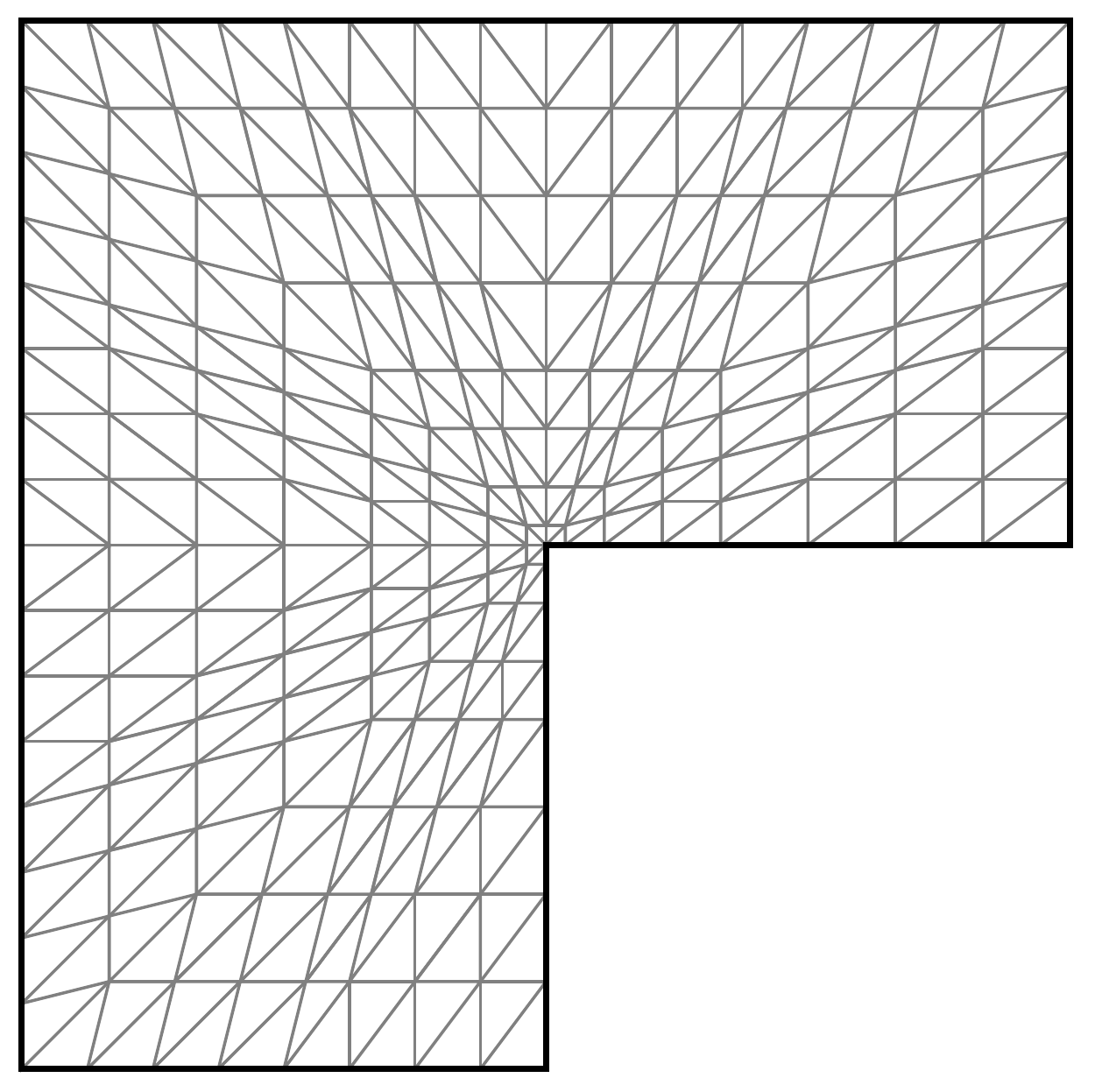}
\end{center}
\caption{\label{KLSubtriangulation} Sub-triangulations of
the L-shaped element $K_L$ that are geometrically graded toward the
origin, corresponding to $r=4$ (left) and $r=8$.  These
sub-triangulations are used merely as part of the argument to justify
the interpolation error estimate, and are not used in the computation
of $\hat{u}$.}
\end{figure}

The number of rows and columns (and non-zeros) of the global stiffness
matrix $A$ grows quadratically with $r$, and the number of rows and
columns of the (dense) submatrix $A_L$ corresponding to $V_1(K_L)$
grows linearly with $r$.  We comment briefly on the condition numbers
of these matrices, which are reported in Table~\ref{LShapeTable}.
As with standard (low-order) finite elements, the condition number of
$A$ grows linearly with $\dim(\widetilde{V}_1(\cT_r))$, or
quadratically with $r$.  In contrast, the growth of the condition
number of $A_L$ seems to be leveling off---it is certainly not growing
quadratically, or even linearly, with $r$.  These issues of
conditioning will be explored further in subsequent work.

\begin{table}
\caption{\label{LShapeTable} Discretization
  errors, $|u-\hat{u}|_{H^1(\Omega)}$, and error ratios for the
  sequence of meshes having one L-shaped cell of fixed size and
  increasingly fine square cells.  Also included are the spectral condition
  numbers of the global (sparse) stiffness matrix $A$ and the small
  (dense) stiffness matrix $A_{L}$ associated with the basis functions on the  L-shaped cell.}
\begin{center}
\begin{tabular}{|l|c|c|c|c|}\hline
$r$&$|u-\hat{u}|_{H^1(\Omega)}$&ratio&$\kappa_2(A)$&$\kappa_2(A_{L})$\\\hline
    1&    	1.3629e-01&    &    		1.2908e+01&    5.7834e+00\\
    2&    	6.7610e-02&    2.0158&    	3.7679e+01&    7.5699e+00\\
    4&    	3.3734e-02&    2.0042&    	1.2527e+02&    8.9390e+00\\
    8&    	1.6855e-02&    2.0014&    	4.6339e+02&    9.8197e+00\\
    16&    	8.4305e-03&    1.9993&    	1.7919e+03&    1.0377e+01\\
    32&    	4.2289e-03&    1.9935&    	7.0585e+03&    1.0728e+01\\
    64&    	2.1468e-03&    1.9699&    	2.8029e+04&    1.2450e+01\\ \hline
\end{tabular}
\end{center}
\end{table}


In~\cite[Example 4.4]{Anand2018} we compared interpolation errors in
$L^2(\Omega)$ for the harmonic function
$u=r^{2/3}\sin(2(\theta-\pi/2)/3)$ on
$\Omega=(-1,1)\times(-1,1)\setminus[0,1]\times[0,1]$, a rotated
version of the $\Omega$ used here, on three different families of
meshes.  As noted above, one of these families of meshes was the one
used here, and it led to optimal order convergence.  The two other
families yielded sub-optimal convergence at a theoretically predicted
rate.  One of these families of meshes consisted solely of congruent
squares, and the other family was the same except right near the
corner, where it had a single small L-shaped cell obtained by
merging three of these squares.  Although the local space on this
L-shaped element could approximate the singular function at the
optimal rate, the neighboring square elements, which got increasingly
closer to the singularity on finer meshes, could not, so the overall
convergence was spoiled.  This motivates our choice to keep the
L-shaped element of fixed size, as we have here.  The particular size
of this element is not crucial to the overall asymptotic behavior of
convergence.  In fact, we could have chosen $K_L=\Omega$ in this case,
and just solved the problem using integral equation techniques, as we
did above for~\eqref{H1NormTrick}.  The point of using the kinds of
meshes that we did here is to demonstrate that they can offer a
feasible alternative to more traditional refinement techniques.
\end{example}

\section{Conclusions}\label{Conclusions}
We have provided analysis and a practical low-order realization of a
novel finite element method on meshes consisting of quite general
curvilinear polygons.  Allowing for such curved elements introduces
both theoretical and computational challenges, including the proper
definition and treatment of polynomial spaces defined on curves,
determining an appropriate interpolation operator and obtaining
meaningful error estimates, and efficiently computing with the
implicitly-defined basis functions.  Concerning the first of these
challenges, we described and demonstrated simple methods for
constructing a spanning set for a polynomial space on an edge, and
then pairing it down to a basis.  Concerning the second challenge, we
proved local interpolation estimates in $L^2$ and $H^1$ for a
projection-based scheme, showing that interpolation in these spaces is
at least as good as interpolation in standard polynomial spaces on
typical element shapes (e.g. triangles and quadrilaterals).  The
optimal order convergence of finite element approximations of a
function having an unbounded gradient without employing small cells
near the singularity, as well as the analysis provided for that
specific example, indicates an even richer approximation theory that
will be explored in subsequent work.  In terms of practical
computations, we described a boundary integral approach that was very
recently developed with precisely these applications in mind.  The
numerical examples illustrated our convergence results on several
families of meshes whose mesh cells are far from being simple
perturbations of straight-edged polygons.  We also numerically
compared the approximation power of two types of harmonic bases on a
mesh consisting of triangles with a single perturbed edge.

As highlighted at the end of Section~\ref{Interpolation}, a better
understanding of how geometric features of elements and choice of $p$
affect constants appearing in the interpolation analysis for $V_p(K)$
is needed.  Additionally, an accounting of errors made in the
approximation of quantities required for the formation of element
stiffness matrices, and those subsequently arising from quadratures,
should be taken into account as part of a more complete analysis of
the method.  Since the experiments in this work really only involved
harmonic basis functions, all quadratures were performed on the
boundaries of elements, but higher-order elements will require
volumetric quadratures as well, and the development of efficient and
robust volumetric quadratures in our context is another topic for
further investigation.  The method should also be tested on PDEs
modeling more complex phenomena, and problems in which there are
curved (and moving) interfaces between materials are of particular
interest in this regard.  Extending this approach to 3D problems in
which general curved cells are permitted presents both theoretical and
practical/computational challenges beyond the obvious analogues
discussed above, and we aim to address them in future work.  Among
these is a definition of $V_p(K)$ that leads to a conforming space
$V_p(\cT)$ without making the dimension of $V_p(K)$ much larger than
is necessary to achieve optimal approximation properties.  A second
challenge is the efficient and accurate evaluation of quantities that
are needed to form the local finite element linear systems; the
approach outlined in Section~\ref{Computation} is inherently 2D, but
there are integral equation approaches that may prove beneficial in
our setting.

\def\cprime{$'$}


\end{document}